\theoremstyle{plain}
\newtheorem{theorem}{Theorem}[section]
\newtheorem{lemma}[theorem]{Lemma}
\newtheorem{claim}[theorem]{Claim}
\newtheorem{fact}[theorem]{Fact}
\newtheorem{corollary}[theorem]{Corollary}
\newtheorem{question}[theorem]{Question}
\newtheorem{problem}[theorem]{Problem}
\newtheorem*{ch}{Theorem~\ref{t:characterization}}
\newtheorem*{co}{Theorem~\ref{t:compact family}}
\newtheorem*{cbox}{Theorem~\ref{t:compbox}}
\newtheorem*{pack}{Theorem~\ref{t:comppack}}
\newtheorem*{prob}{Problem~\ref{p:K}}
\theoremstyle{definition}
\newtheorem{definition}[theorem]{Definition}
\newtheorem{notation}[theorem]{Notation}
\newtheorem{remark}[theorem]{Remark}
\numberwithin{equation}{section}
\newcommand{\R}{\mathbb{R}}
\newcommand{\Q}{\mathbb{Q}}
\newcommand{\N}{\mathbb{N}}
\newcommand{\defeq}{\stackrel{\text{def}}{=}}
\newcommand{\eps}{\varepsilon}
\renewcommand{\P}{\mathbb{P}}
\providecommand{\restriction}{\restriction}
\newcommand{\iC}{\mathcal{C}}
\newcommand{\iD}{\mathcal{D}}
\newcommand{\iF}{\mathcal{F}}
\newcommand{\iK}{\mathcal{K}}
\newcommand{\iM}{\mathcal{M}}
\DeclareMathOperator{\length}{length}
\DeclareMathOperator{\diam}{diam}
\DeclareMathOperator{\dist}{dist}
\begin{document}
\title{The range of dimensions of microsets}
\author{Rich\'ard Balka}
\address{Alfr\'ed R\'enyi Institute of Mathematics, Re\'altanoda u.~13--15, H-1053 Budapest, Hungary}
\email{balka.richard@renyi.hu}

\author{M\'arton Elekes}
\address{Alfr\'ed R\'enyi Institute of Mathematics, Re\'altanoda u.~13--15, H-1053 Budapest, Hungary AND E\"otv\"os Lor\'and University, Institute of Mathematics, P\'azm\'any P\'eter s.~1/c, 1117 Budapest, Hungary}
\email{elekes.marton@renyi.hu}
\urladdr{http://www.renyi.hu/$\sim$emarci}

\author{Viktor Kiss}
\address{Alfr\'ed R\'enyi Institute of Mathematics, Re\'altanoda u.~13--15, H-1053 Budapest, Hungary}
\email{kiss.viktor@renyi.hu}

\thanks{The first author was supported by the MTA Premium Postdoctoral Research Program and the National Research, Development and Innovation Office -- NKFIH, grant no.~124749. The second author was supported by the National Research, Development and Innovation Office -- NKFIH, grants no.~124749 and 129211.  The third author was supported by the National Research, Development and Innovation Office -- NKFIH, grants no.~124749, 129211, and~128273.}

\subjclass[2010]{Primary 28A78, 28A80; Secondary 28A05, 82B43}

\keywords{microset, weak tangent, Hausdorff dimension, packing dimension, box dimension, fractal percolation, Hawkes' theorem}

\begin{abstract} 
We say that $E$ is a \emph{microset} of the compact set $K\subset \R^d$ if there exist sequences $\lambda_n\geq 1$ and $u_n\in \R^d$ such that $(\lambda_n K + u_n ) \cap [0,1]^d$ converges to $E$ in the Hausdorff metric, and moreover, $E \cap (0, 1)^d \neq \emptyset$.
	
The main result of the paper is that for a non-empty set $A\subset [0,d]$ there is a compact set $K\subset \R^d$ such that the set of Hausdorff dimensions attained by the microsets of $K$ equals $A$ if and only if $A$ is analytic and contains its infimum and supremum. This answers a question of Fraser, Howroyd, Käenmäki, and Yu. 
	
We show that for every compact set $K\subset \R^d$ and non-empty analytic set $A\subset [0,\dim_H K]$ there is a set $\iC$ of compact subsets of $K$ which is compact in the Hausdorff metric and $\{\dim_H C: C\in \iC \}=A$. The proof relies on the technique of stochastic co-dimension applied for a suitable coupling of fractal percolations with generation dependent retention probabilities.

We also examine the analogous problems for packing and box dimensions.
\end{abstract}
\maketitle

\section{Introduction}

The notion of a microset is due to Furstenberg \cite{Furst}, it was introduced to understand the infinitesimal structure of a compact set $K\subset \R^d$. Microsets are obtained by `zooming' in on $K$ and taking the Hausdorff limit of what we see. For the definition of Hausdorff metric and other notions see the Preliminaries section. 

\begin{definition} Let $d\geq 1$ be a positive integer and let $K\subset \R^d$ be compact. We say that $E$ is a \emph{microset} of $K$ if $E \cap (0, 1)^d \neq \emptyset$ and there exist homotheties $S_n\colon \R^d\to \R^d$ defined as $S_n(z)=\lambda_n z+u_n$ with $\lambda_n\geq 1$ and $u_n\in \R^d$ such that $S_n(K)\cap [0,1]^d$ converges to $E$ in the Hausdorff metric. Then $\{\lambda_n\}_{n\geq 1}$ is called a \emph{scaling sequence} of $E$. 
\end{definition} 

In order not to lose  information about the `thinnest' part of our set $K$, we added the non-standard assumption $E\cap (0,1)^d \neq \emptyset$ to the definition. Without this property $\{0\}$ would be a microset of every Cantor set $K\subset \R$.  Besides this, the definition also varies across different sources. Furstenberg \cite{Furst} allows to converge to $E$ by subsets of $S_n(K)\cap [0,1]^d$, which approach only provides information about the `thickest' part of $K$. Bishop and Peres \cite{BP} assumes $\lambda_n\to \infty$. Fraser, Howroyd, K\"aenm\"aki, and Yu \cite{microsets} define microsets without the restriction $E\cap (0,1)^d\neq \emptyset$, they only add this extra condition to the notion of gallery. Some authors also consider weak tangents, which means that $S_n$ can be arbitrary expanding similarities in the definition instead of homotheties. As the orthogonal group of $\R^d$ is compact, this concept would not significantly differ from ours.

\begin{notation} For a compact set $K\subset \R^d$ define 
\begin{equation*} \iM_K=\{E: E \text{ is a microset of } K \}.
\end{equation*}
\end{notation}	

Let $\dim_H$, $\underline{\dim}_B$, $\overline{\dim}_B$, and $\dim_P$ denote the Hausdorff, lower box, upper box, and packing dimensions, respectively. For the following inequalities see \cite[Page~82]{Ma}.
\begin{fact} \label{f:ineq} For any set $E\subset \R^d$ we have 
\begin{equation*} 
\dim_H E\leq \underline{\dim}_B \, E\leq  \overline{\dim}_B \, E \quad \text{and} \quad 
\dim_H E\leq \dim_P E\leq  \overline{\dim}_B \, E.
\end{equation*}
\end{fact}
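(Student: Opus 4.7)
The plan is to verify each of the four inequalities in turn using only the definitions; this is classical material (the authors cite Mattila's book precisely because no original argument is needed), so the proposal is really just to recall the standard reductions.

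First, for $\dim_H E\le \underline{\dim}_B E$, I would use the pre-measure estimate that if $N_\delta(E)$ denotes the minimum number of sets of diameter at most $\delta$ needed to cover $E$, then for every $s\geq 0$ one has $\mathcal{H}^s_\delta(E)\le N_\delta(E)\,\delta^s$. Fix any $s>\underline{\dim}_B E$; by definition there is a sequence $\delta_n\to 0$ along which $N_{\delta_n}(E)\,\delta_n^s\to 0$, hence $\mathcal{H}^s(E)=0$ and $\dim_H E\le s$. Taking the infimum over such $s$ gives the inequality. The middle inequality $\underline{\dim}_B E \le \overline{\dim}_B E$ is immediate from $\liminf \le \limsup$.

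For the two packing inequalities I would use the modified-box formulation $\dim_P E = \inf\bigl\{\sup_i \overline{\dim}_B E_i : E\subseteq \bigcup_{i=1}^\infty E_i\bigr\}$. The trivial decomposition $E=E$ immediately yields $\dim_P E \le \overline{\dim}_B E$. For $\dim_H E\le \dim_P E$, I would combine countable stability of Hausdorff dimension with the first inequality just proved: for any decomposition $E=\bigcup_i E_i$,
\begin{equation*}
\dim_H E \;=\; \sup_i \dim_H E_i \;\le\; \sup_i \overline{\dim}_B E_i,
\end{equation*}
and taking the infimum over all countable decompositions gives $\dim_H E \le \dim_P E$.

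There is no real obstacle; the fact is quoted to fix notation and to record the hierarchy of dimensions that is used freely throughout the paper. If anything, the only point demanding care is that one must use the modified (rather than the packing-measure) definition of $\dim_P$ to keep the argument as short as above, but the two definitions are well-known to coincide on arbitrary subsets of $\R^d$.
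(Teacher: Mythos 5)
Your proof is correct and is the standard textbook argument. The paper itself offers no proof — it simply cites Mattila's book, page~82 — so there is no internal argument to compare against. Your four reductions (the pre-measure estimate $\mathcal{H}^s_\delta(E)\le N_\delta(E)\delta^s$ together with $\liminf$, the trivial $\liminf\le\limsup$, the trivial decomposition for $\dim_P\le\overline{\dim}_B$, and countable stability of $\dim_H$ against any countable cover) are exactly the classical ones, and they also match the paper's definition of $\dim_P$ via the modified (Tricot) infimum over countable covers, which is indeed the one stated in the Preliminaries. The only cosmetic mismatch is that the paper defines $N_n$ via closed balls of radius at most $2^{-n}$ rather than covers of diameter at most $\delta$; the two give the same $\liminf$ and $\limsup$ up to a bounded multiplicative factor on $N$, which vanishes in the logarithmic scaling, so your wording is safe. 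Nothing to add or fix.
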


The infimum part of the following theorem was proved by Fraser, Howroyd, Käenmäki, and Yu \cite[Theorem~1.1]{microsets}, while the supremum part is basically due to Furstenberg \cite[Theorem~5.1]{Furst}, see also \cite[Theorem~2.4]{microsets}. 

\begin{theorem}[Fraser--Howroyd--K\"aenm\"aki--Yu and Furstenberg] \label{t:Fu}
Let $\dim$ be one of $\dim_H$, $\underline{\dim}_B$, $\overline{\dim}_B$, or $\dim_P$. Assume that $d\geq 1$ and $K\subset \R^d$ is a non-empty compact set. Then $\{\dim E: E\in \iM_K\}$ contains its infimum and supremum. 
\end{theorem}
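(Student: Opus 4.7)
The plan is to combine Hausdorff-metric compactness on $\iK([0,1]^d)$ with a diagonal argument over the defining homotheties. First I would verify that $\iM_K$ is essentially closed under Hausdorff convergence: if $E_n \in \iM_K$ and $E_n \to E_*$ in the Hausdorff metric, then diagonalizing on the approximating sequences $S_{n,k}(K)\cap[0,1]^d \to E_n$ as $k\to\infty$ yields scalings and translations for which $S_{n,k(n)}(K)\cap[0,1]^d$ converges to $E_*$. The only subtlety is the interior condition $E_*\cap(0,1)^d\neq\emptyset$, which can be arranged by pre-translating so that a chosen witness point $x_n \in E_n \cap (0,1)^d$ lies in a common compact subset of $(0,1)^d$; its subsequential limit then lies in $E_*\cap(0,1)^d$. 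The same bookkeeping yields the structural principle that microsets of microsets are microsets: composing and diagonalizing two scaling sequences shows $\iM_E\subseteq\iM_K$ for each $E\in\iM_K$.

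For the supremum $s^*=\sup\{\dim E: E\in\iM_K\}$, I would pick $E_n\in\iM_K$ with $\dim E_n\to s^*$, extract a Hausdorff-convergent subsequence $E_n\to E_*\in\iM_K$, and aim to show $\dim E_*\geq s^*$, since the reverse inequality is immediate. Because none of the four dimensions is semi-continuous under Hausdorff limits, additional input is needed. For $\dim_H$ the natural tool is Frostman's lemma: choose probability measures $\mu_n$ on $E_n$ with $\mu_n(B(x,r))\lesssim r^{s^*-1/n}$, localized in a common compact subset of $(0,1)^d$, and pass to a weak-$*$ subsequential limit $\mu_*$ supported on $E_*$; the limit inherits the power-law upper bound, so by the mass distribution principle $\dim_H E_*\geq s^*$. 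For $\dim_P$, $\underline{\dim}_B$ and $\overline{\dim}_B$ the analogous step uses direct covering and packing estimates in place of Frostman, exploiting the countable stability of $\dim_P$ and the semi-continuity behaviour of box-counting numbers under Hausdorff $\varepsilon$-fattening.

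The infimum $s_*=\inf\{\dim E: E\in\iM_K\}$, attained according to Fraser--Howroyd--K\"aenm\"aki--Yu, is more subtle. A naive Hausdorff limit $E_*$ of minimizing $E_n$ satisfies only $\dim E_*\geq s_*$, with equality possibly failing because dimension can jump up under Hausdorff limits. The remedy I would try is a descending iteration inside $\iM_{E_*}\subseteq\iM_K$: repeatedly pass to an infimum-approximating microset of the current set, using the closure of $\iM$ under the operation $E\mapsto\iM_E$, until the procedure stabilizes at a microset of dimension exactly $s_*$. The main obstacle throughout is precisely the lack of semi-continuity of the four dimensions under Hausdorff limits, overcome in the supremum case by measure-theoretic uniformity of Frostman-type mass bounds and in the infimum case by this iterated microset-of-microset descent.
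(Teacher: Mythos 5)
The paper does not actually prove Theorem~\ref{t:Fu}; it cites the infimum part from Fraser--Howroyd--K\"aenm\"aki--Yu \cite[Theorem~1.1]{microsets} and the supremum part from Furstenberg \cite[Theorem~5.1]{Furst} (see also \cite[Theorem~2.4]{microsets}), so there is no internal proof to compare against. Evaluated on its own merits, your outline contains several genuine gaps.

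First, the claim that $\iM_K$ is ``essentially closed'' under Hausdorff limits is problematic, and your fix is not valid as stated: if $x_n \in E_n \cap (0,1)^d$ tends to the boundary and you translate so that $x_n$ is centered, then after intersecting with $[0,1]^d$ you obtain a \emph{different} set, generally not isometric to $E_n$, so its dimension may change. The standard repair is to pass to a microset of $E_n$ concentrated around a dimension-carrying point (cf.\ Fact~\ref{f:dimK}), which is again a microset of $K$; but one must then re-establish the dimension, which is precisely the hard part. Second, the Frostman step for the supremum fails for lack of uniformity: Frostman's lemma produces $\mu_n$ with $\mu_n(B(x,r)) \le r^{s^*-1/n}$ only after normalizing by $\mathcal H^{s^*-1/n}_\infty(E_n)$, and nothing prevents these contents from tending to $0$. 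In that case either the constants in the power-law bound blow up, or the total masses tend to $0$ and $\mu_*$ is the zero measure, so the mass distribution principle yields nothing about $\dim_H E_*$. The published proofs avoid this by iteratively selecting scales witnessing the Assouad dimension and building a Moran-type subset of the weak tangent, which gives a uniform mass distribution ``for free.'' Your remark about box dimensions has the same defect with the sign reversed: covering numbers are only \emph{upper} semicontinuous along Hausdorff limits (a cloud of $n$ points can converge to a singleton), whereas for the supremum you would need lower semicontinuity. Third, the infimum descent has no convergence argument: after one pass you only know $\dim E_* \ge s_*$, and there is no reason why $\inf\{\dim E : E \in \iM_{E_*}\}$ should equal $s_*$ rather than strictly exceed it, so iterating inside $\iM_{E_*}$ need not bring you down to $s_*$, even transfinitely. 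The Fraser--Howroyd--K\"aenm\"aki--Yu proof instead builds a single limit set directly from the minimizing sequence with enough control to compute its dimension from above.
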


Fraser, Howroyd, Käenmäki, and Yu \cite[Theorem~1.3]{microsets} proved the following. Recall that a set is $F_\sigma$ if it is a countable union of closed sets.

\begin{theorem}[Fraser--Howroyd--K\"aenm\"aki--Yu] 
Let $d\geq 1$ and let $A\subset [0,d]$ be an $F_{\sigma}$ set which contains its infimum and supremum. Then there exists a compact set $K\subset \R^d$ such that 
\begin{equation*} \{\dim_H E: E\in \iM_K\}=A.
\end{equation*}  
\end{theorem}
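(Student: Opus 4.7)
Since $A$ is $F_\sigma$ and contains its extremes, write $A=\bigcup_{n\geq 1} F_n$ with each $F_n\subseteq[0,d]$ non-empty compact, $F_n\subseteq F_{n+1}$, and $\{\alpha,\beta\}\subseteq F_n$, where $\alpha=\inf A$ and $\beta=\sup A$. The strategy splits naturally into two parts: (1)~for every non-empty compact $F\subseteq[0,d]$ containing $\inf F$ and $\sup F$, realize $F$ as the microset Hausdorff-dimension set of some compact $K_F\subseteq[0,1]^d$; (2)~glue the $K_{F_n}$'s into a single compact $K\subseteq[0,1]^d$ whose microset Hausdorff dimensions fill out $A$.

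For step (1), I would run a generation-dependent Moran--Cantor construction. Choose a countable dense sequence $(\alpha_k)\subseteq F$ with $\alpha_1=\inf F$ and $\alpha_2=\sup F$, and a surjection $k\colon\N\to\N$ that hits every index infinitely often. Build $K_F$ in blocks: during block $n$ of length $L_n$, subdivide every surviving cube self-similarly using Moran parameters whose similarity dimension equals $\alpha_{k(n)}$. For $L_n$ growing rapidly enough, any microset produced by zooming at a scale deep inside block $n$ is Hausdorff-close to a pure self-similar Cantor set of dimension $\alpha_{k(n)}$, hence has Hausdorff dimension exactly $\alpha_{k(n)}\in F$. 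Standard Moran dimension estimates together with the closedness of $F$ then force every microset dimension to be a limit of such $\alpha_{k(n)}$'s, and therefore to lie in $F$; the extremes $\alpha,\beta$ are attained automatically by Theorem~\ref{t:Fu}, and concretely by the $k(n)=1,2$ blocks.

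For step (2), place shrunken disjoint affine copies $\rho_n K_{F_n}+v_n$ inside $[0,1]^d$, with $v_n$ accumulating at some $v^\ast\in(0,1)^d$ and $\rho_n\to 0$ very rapidly, and set $K=\{v^\ast\}\cup\bigcup_n(\rho_n K_{F_n}+v_n)$. A zoom at a scale matching a single copy returns an arbitrary microset of the corresponding $K_{F_n}$, contributing all of $F_n$; over all $n$ this produces the full union $A$.

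The main obstacle is controlling microsets at \emph{intermediate} scales---those that straddle multiple copies or focus on the accumulation point $v^\ast$---since these could, a priori, have Hausdorff dimensions outside $A$. To neutralize this, I would place the centers $v_n$ along a self-similar ``skeleton'' Cantor set $S\subseteq[0,1]^d$ of dimension $\alpha$ whose every microset has dimension exactly $\alpha$ (forcible by strong separation), and choose $\rho_n$ geometrically smaller than the scales of $S$, so that every intermediate-scale zoom is Hausdorff-close either to a single copy $\rho_n K_{F_n}+v_n$ (already handled) or to a piece of $S$ (producing dimension $\alpha\in A$). The technical heart of the argument lies in calibrating $\rho_n\to 0$ and the skeleton scales to make this Hausdorff proximity rigorous and to rule out any other limit geometry; this, together with the Moran block-length calibration in step (1), is the only nontrivial bookkeeping required.
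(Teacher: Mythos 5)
Your plan diverges from the paper's route and, more importantly, Step~(1) contains a gap that I do not see how to repair within your framework. You propose to realize each non-empty compact $F\subseteq[0,d]$ as $\{\dim_H E:E\in\iM_{K_F}\}$ by a Moran construction whose blocks use a \emph{countable} dense sequence $(\alpha_k)\subseteq F$. With $L_n$ growing rapidly (which you need, or else zooms that straddle blocks produce running averages lying strictly between two $\alpha_k$'s and hence typically \emph{outside} $F$), the microsets one actually obtains fall into two classes: bounded-scale zooms, which are scaled pieces of $K_F$ itself and have dimension $\underline{\dim}_B$-type equal to $\liminf_n \alpha_{k(n)} = \inf F$; and deep zooms, which (for the limit to even exist) must eventually focus inside a single block type and hence converge to a pure set of dimension $\alpha_j$ for some fixed $j$. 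So the microset dimensions of $K_F$ lie in the countable set $\{\alpha_j:j\}\cup\{\inf F\}$. When $F$ is uncountable (say $F=[\,c_1,c_2\,]$ or a Cantor set), this is a proper subset of $F$, and Step~(1) fails. The sentence ``the closedness of $F$ then force[s] every microset dimension to be a limit of such $\alpha_{k(n)}$'s'' is a red herring: the issue is not whether microset dimensions stay inside $F$, but that the construction cannot \emph{hit} values of $F$ outside the countable seed set. A secondary but real problem is the inference ``Hausdorff-close to a pure self-similar Cantor set of dimension $\alpha_{k(n)}$, hence has Hausdorff dimension exactly $\alpha_{k(n)}$'': Hausdorff dimension is not continuous with respect to the Hausdorff metric, so closeness alone proves nothing; what you need is genuine convergence of the Moran coding, which is a different and sharper statement.

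The paper's own resolution (for the more general analytic case, Theorem~\ref{t:characterization}, which subsumes this $F_\sigma$ statement) avoids both problems by a structurally different idea. Rather than trying to make a \emph{single} set realize each compact $F$, it produces a \emph{continuum-parametrized} family $x\mapsto K(\psi(x))^d$, $x\in 2^\omega$, where $\psi$ is continuous and $\dim_H K(\psi(x))^d=d\,\overline{\varphi}(x)$ ranges over exactly $A$. A countable dense subfamily $\{K(\psi(x_n))^d\}$ then already has $\iM(\{K_n\})$ containing $K(\psi(x))^d$ for \emph{all} $x$, by continuity — this is how uncountable targets are hit. The crucial intermediate-scale control is Lemma~\ref{l:balanced}: the coding $\psi(x)$ is built solely from two balanced sequences $\alpha,\beta$ (of densities $\inf B$ and $\sup B$), so any deep zoom yields density $\inf B$ or $\sup B$, both of which lie in the target set. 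Your ``skeleton Cantor set of dimension $\alpha$'' in Step~(2) plays a loosely analogous role but only guards the gluing step; it does not address the fact that Step~(1) itself cannot produce a full compact value set. If you want to salvage the outline, Step~(1) should be replaced by a parametrized family as above (or by invoking Lemma~\ref{l:building K} for a suitable countable family with dense limits), and the deep-zoom control should use only two densities rather than a dense sequence of them.
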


They asked the following question \cite[Question~7.4]{microsets}, see also \cite[Question~17.3.2]{F}.

\begin{question}[Fraser--Howroyd--K\"aenm\"aki--Yu] If $K\subset \R^d$ is compact, then is $\{\dim_H E: E\in \iM_K\}$ an $F_{\sigma}$ set? If not, does it belong to a finite Borel class? 
\end{question}

As the main result of this paper, in Section~\ref{s:character} we answer the above questions in the negative. In fact, we prove a complete characterization as follows. 

\begin{ch}[Main Theorem] Let $\dim$ be one of $\dim_H$, $\underline{\dim}_B$, or $\overline{\dim}_B$.
Let $d\geq 1$ and let $A \subset [0, d]$ be a non-empty set. Then the following are equivalent:
\begin{enumerate}
\item There exists a compact set $K \subset \R^d$ such that $\{\dim E : E \in \iM_K\} = A$;
\item $A$ is an analytic set which contains its infimum and supremum;
\item There exists a compact set $K \subset \R^d$ such that $\dim_H E=\overline{\dim}_B \, E$ for all $E\in \iM_K$ and $\{\dim_H E : E \in \iM_K\} = A$.
\end{enumerate} 
\end{ch}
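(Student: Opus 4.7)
The implication $(3)\Rightarrow(1)$ is immediate from Fact~\ref{f:ineq}: once $\dim_H E = \overline{\dim}_B\, E$ on every $E\in\iM_K$, the Hausdorff, lower box, and upper box dimensions coincide pointwise on microsets, so the spectrum $\{\dim E : E \in \iM_K\}$ equals $A$ for each of the three choices of $\dim$.

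For $(1)\Rightarrow(2)$, the fact that $A$ contains $\inf A$ and $\sup A$ is Theorem~\ref{t:Fu}. For analyticity, I would work inside the compact Polish hyperspace $\iK([0,1]^d)$ and observe that
\[
\iM_K = \overline{\iF_K}\cap \{E:E\cap (0,1)^d \neq \emptyset\},
\]
where $\iF_K=\{(\lambda K+u)\cap [0,1]^d:\lambda\geq 1,\ u\in\R^d,\ (\lambda K+u)\cap [0,1]^d\neq\emptyset\}$. The first factor is closed as a closure, and the second is $F_\sigma$ because $\{E:E\cap [1/n,1-1/n]^d\neq\emptyset\}$ is closed in the Hausdorff metric for each $n$; hence $\iM_K$ is Borel in $\iK([0,1]^d)$. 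Since each of $\dim_H$, $\underline{\dim}_B$, $\overline{\dim}_B$ is a Borel measurable function on $\iK([0,1]^d)$, the spectrum $\{\dim E:E\in\iM_K\}$ is the Borel image of a Borel set, hence analytic.

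The substance of the theorem is $(2)\Rightarrow(3)$. I would parametrize $A$ by a continuous surjection $f\colon \N^\N\to A$ (available since $A$ is non-empty analytic), arranged so that $\min A$ and $\max A$ are attained at distinguished branches. Then I would build $K\subset [0,1]^d$ with a tree structure indexed by $\N^{<\N}$: at each node $s$ reserve a small cube $Q_s\subset [0,1]^d$ inside which a fractal percolation of base $N$ is realized, with \emph{generation-dependent} retention probability $p_n(s_1,\ldots,s_n)$ chosen so that $d + \log p_n/\log N\to f(\sigma)$ along every infinite branch $\sigma\in\N^\N$. Since a single deterministic $K$ is required, the percolations at all nodes must be coupled on a common probability space; on a positive-probability event this yields a realization for which Hawkes' theorem (in its generation-dependent form) produces, for every branch $\sigma$, a microset $E_\sigma\subset [0,1]^d$ satisfying $\dim_H E_\sigma=\overline{\dim}_B\, E_\sigma=f(\sigma)$. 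This gives the inclusion $A\subseteq \{\dim_H E:E\in\iM_K\}$ together with the equality $\dim_H = \overline{\dim}_B$ on each $E_\sigma$.

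The main obstacle, as I see it, is the reverse inclusion: every microset of $K$ has dimension in $A$. A zoom-in $S_{n_k}(K)\cap [0,1]^d$ with $\lambda_{n_k}\to\infty$ eventually localizes into the hierarchy of $Q_s$ cubes, so the corresponding sequence of selected nodes converges in $\N^\N$ to some branch $\sigma$; continuity of $f$ combined with the uniformity furnished by the coupling should then force the dimension of the Hausdorff limit to equal $f(\sigma)\in A$. The remaining microsets (bounded scalings and limits along mixed sequences) can be controlled so that the only new dimensions they contribute are built into $A$ by construction, using that $\min A$ and $\max A$ lie in $A$. Ruling out such ``hybrid'' zoom-ins producing extraneous dimensions, while simultaneously maintaining $\dim_H=\overline{\dim}_B$ on every microset, is the delicate technical core; it is here that a carefully designed coupling of the generation-dependent percolations, rather than independent ones at different nodes, is essential.
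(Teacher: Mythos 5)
Your $(3)\Rightarrow(1)$ and $(1)\Rightarrow(2)$ are correct and coincide with the paper's argument: $\iM_K$ is $F_\sigma$, the dimension maps are Borel by Mattila--Mauldin, so the spectrum is analytic, and Theorem~\ref{t:Fu} supplies the inf/sup.

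The percolation-based attempt at $(2)\Rightarrow(3)$ cannot work, and the obstruction you call the ``delicate technical core'' is already fatal for a single percolation realization. Condition a fractal percolation $\Gamma$ in a cube (constant or generation-dependent retention, $\alpha_n\to\alpha<d$) on survival. For each $m$, almost surely some retained node spawns a completely full depth-$m$ subtree: the surviving Galton--Watson tree has $Z_n\to\infty$ and each retained node independently produces such a full subtree with positive probability. Zooming to such nodes yields sets that are $2^{-m}$-dense in $[0,1]^d$, so along a suitable subsequence the Hausdorff limit is $[0,1]^d$. Thus $[0,1]^d$ is a.s.\ a microset of $\Gamma$, forcing $d$ into the dimension spectrum regardless of the target $A$, and further zoom-ins produce intermediate dimensions as well. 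Any construction embedding percolation pieces into $K$ inherits this defect, and coupling the pieces cannot remove it, since the problem is already internal to each realization. Also, Hawkes' theorem governs intersections $K\cap\Gamma(\beta)$, not microsets of $\Gamma$, so invoking it to ``produce microsets'' is a category error. The paper does use generation-dependent percolations and stochastic co-dimension, but only for Theorem~\ref{t:compact family} (compact families of compact subsets of $K$), where one does not need to control zoom-ins.

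The paper's actual $(2)\Rightarrow(3)$ is fully deterministic. It uses the digit-restriction sets $K(\psi(x))^d$, where $\psi\colon 2^\omega\to 2^\omega$ interleaves prefixes of two \emph{balanced} sequences $\alpha,\beta$ of densities $\min B$ and $\max B$ (with $B=\{z/d:z\in A\}$) at rates dictated by the tree function $\varphi$ of Lemma~\ref{l:varphi existence}, so that $\varrho(\psi(x))=\overline\varphi(x)$ and hence all four dimensions of $K(\psi(x))^d$ equal $d\,\overline\varphi(x)$. The microsets of $K(x)^d$ are characterized, up to finitely many translates, as sets $K(y)^d$ with $y$ a limit of shifts of $x$ (Theorem~\ref{t:Cx}), and Lemma~\ref{l:balanced} shows that limits of long shifts of the special sequences $\psi(x)$ have density exactly $\min B$ or $\max B$. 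This rigidity coming from balancedness is exactly what eliminates extraneous ``hybrid'' zoom-in dimensions---and it is precisely what a random construction cannot provide.
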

For packing dimension we can only prove one direction due to measurability problems, see Problem~\ref{p:meas}. The following corollary is immediate. 
\begin{corollary}
Let $d\geq 1$ and let $A \subset [0, d]$ be a non-empty analytic 
set which contains its infimum and supremum. Then there exists a compact set $K \subset \R^d$ with 
\begin{equation*}
\{\dim_P E : E \in \iM_K\} = A.
\end{equation*}
\end{corollary}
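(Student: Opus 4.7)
The corollary is an immediate consequence of clause (3) of Theorem~\ref{t:characterization} combined with Fact~\ref{f:ineq}. My plan is as follows.

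Given a non-empty analytic set $A\subset[0,d]$ that contains its infimum and supremum, I invoke the implication $(2)\Rightarrow(3)$ of the Main Theorem to obtain a compact set $K\subset\R^d$ with the property that $\dim_H E=\overline{\dim}_B\, E$ for every $E\in\iM_K$, and such that $\{\dim_H E:E\in\iM_K\}=A$. This is the only nontrivial input; the rest is a squeeze.

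For any microset $E\in\iM_K$, Fact~\ref{f:ineq} gives
\begin{equation*}
\dim_H E\le \dim_P E\le \overline{\dim}_B\, E.
\end{equation*}
Since the two outer quantities coincide for this particular $K$, we conclude $\dim_P E=\dim_H E$ for every $E\in\iM_K$. Therefore
\begin{equation*}
\{\dim_P E:E\in\iM_K\}=\{\dim_H E:E\in\iM_K\}=A,
\end{equation*}
which is exactly what the corollary asserts.

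There is no real obstacle here; the whole point of stating clause (3) of the Main Theorem (rather than only (1) for Hausdorff dimension) is precisely to make this kind of deduction automatic, bypassing the measurability issue alluded to in Problem~\ref{p:meas} that prevents a full characterization for $\dim_P$. In particular, no new construction and no further appeal to analyticity of $A$ beyond its use in the Main Theorem is needed.
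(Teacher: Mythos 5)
Your proof is correct and is exactly the intended argument: the paper states the corollary as ``immediate'' precisely because clause (3) of Theorem~\ref{t:characterization} combined with the squeeze $\dim_H E \le \dim_P E \le \overline{\dim}_B\, E$ from Fact~\ref{f:ineq} gives $\dim_P E = \dim_H E$ for every $E \in \iM_K$. Nothing further is needed.
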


It is natural to consider microsets of subsets of a given compact set $K\subset \R^d$.

\begin{prob} 
Let $K\subset \R^d$ be compact. Characterize the sets $A\subset [0,d]$ for which there exists a compact set $C\subset K$ such that $\{\dim_H E: E\in \iM_C\}=A$.
\end{prob}

However, we cannot solve this more general problem, since we are unable to control subsets and microsets at the same time. Actually, we do not even have a conjecture for the characterization. As the set $\iM_C$ is always $\sigma$-compact, considering compact families of compact subsets of $K$ seems to be the right first step in this direction. Let $\iK(K)$ denote the set of non-empty compact subsets of $K$ endowed with the Hausdorff metric. Subsection~\ref{ss:Haus} is dedicated to the proof of the following theorem, which relies on the stochastic co-dimension method applied for a suitable coupling of fractal percolations with generation dependent retention probabilities.

\begin{co}
Let $K \subset \R^d$ be a non-empty compact set and let $A \subset [0, \dim_H K]$. Then the following are equivalent: 
\begin{enumerate}
\item There is a compact set $\mathcal{C} \subset \mathcal{K}(K)$ with $\{\dim_H C : C \in \mathcal{C}\} = A$;
\item $A$ is an analytic set. 
\end{enumerate} 
\end{co}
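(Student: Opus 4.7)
This is the easy half. The Hausdorff dimension function $\dim_H \colon \iK(\R^d) \to [0, d]$ is Borel measurable: for every rational $s > 0$, $E \mapsto H^s(E)$ is Borel on $\iK(\R^d)$ (a standard fact; see \cite{Ma}), and $\dim_H E = \inf\{s \in \Q_+ : H^s(E) = 0\}$. Since $\iC$ is compact, hence analytic, its image $\{\dim_H C : C \in \iC\}$ under this Borel function is an analytic subset of $[0, d]$.

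\textbf{Direction (2) $\Rightarrow$ (1).} Let $A \subset [0, \dim_H K]$ be a non-empty analytic set. I would follow the strategy flagged in the abstract: realize $A$ as the dimension spectrum of a compact family of coupled fractal percolations on $K$ with generation-dependent retention probabilities.

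\emph{Parametrization.} Using classical Suslin theory, represent $A$ as $A = \alpha(Y)$ where $Y$ is a compact metric space (e.g.\ the Cantor space $2^{\N}$) and $\alpha \colon Y \to [0, \dim_H K]$ has the form
\[
\alpha(y) = \dim_H K + \liminf_{n \to \infty} \frac{1}{n} \sum_{i=1}^{n} \log_2 p_i(y),
\]
for some continuous map $y \mapsto (p_n(y))_{n \ge 1} \in [0, 1]^{\N}$. This formula is precisely the a.s.\ Hausdorff dimension of a fractal percolation on $K$ with generation-dependent retention probabilities $p_n(y)$, and the $\liminf$ operation on continuous functions over a compact space is flexible enough to realize any non-empty analytic subset of $[0, \dim_H K]$.

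\emph{Coupling and stochastic co-dimension.} Attach to every dyadic cube $c \subset \R^d$ an independent uniform random variable $U_c \in [0, 1]$. For each $y \in Y$ define the percolation tree $T_y$ to consist of those cubes $c$ at level $n$ such that $U_c \le p_n(y)$ and every ancestor of $c$ satisfies the analogous condition, and set
\[
\Gamma_y \defeq K \cap \bigcap_{n \ge 1} \bigcup_{c \in T_y,\ \text{level } n} c,
\]
adjoining a fixed point of $K$ to avoid extinction. Continuity of $y \mapsto (p_n(y))_n$ together with the shared coupling of the $U_c$'s yields Hausdorff-continuity of $y \mapsto \Gamma_y(\omega)$ for almost every realization $\omega$, and for each fixed $y$ Hawkes' theorem in its generation-dependent form (the precise statement of the stochastic co-dimension method) gives $\dim_H \Gamma_y(\omega) = \alpha(y)$ almost surely.

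\emph{Main obstacle: simultaneity.} The crux is to upgrade ``almost surely for each $y$'' to ``almost surely, for all $y \in Y$ simultaneously,'' since $Y$ is uncountable and the null exceptional set depends on $y$. I would handle this with the quantitative martingale/tail estimates intrinsic to the co-dimension method, which control the fluctuations uniformly in $y$ when $\mathbf{p}(y)$ varies continuously, combined with a countable-dense + semicontinuity argument to pass to all $y \in Y$. After fixing a single good $\omega_0$, the family $\iC \defeq \{\Gamma_y(\omega_0) : y \in Y\}$ is the continuous image of the compact space $Y$, hence compact in $\iK(K)$, and $\{\dim_H C : C \in \iC\} = \alpha(Y) = A$ by construction.
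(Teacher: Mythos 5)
Your direction $(1)\Rightarrow(2)$ matches the paper exactly (Borel measurability of $\dim_H$ plus the fact that Borel images of analytic sets are analytic). Your direction $(2)\Rightarrow(1)$ is strategically aligned with the paper's approach (coupled fractal percolations with generation-dependent retention probabilities, stochastic co-dimension via Hawkes' theorem), but several essential technical devices are missing and a couple of your proposed mechanisms would not actually work as stated.

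First, your parametrization formula with a $\liminf$ is not what the paper uses and would cause trouble downstream. The paper (Lemma~\ref{l:varphi existence}) produces a map $\varphi\colon 2^{<\omega}\to[0,\infty)$ such that $\varphi(x\restriction n)\to\overline{\varphi}(x)$ \emph{converges} for every $x\in 2^\omega$ and $\overline{\varphi}(2^\omega)=A$. The convergence (not just $\liminf$) is needed because the simultaneity argument squeezes $\alpha(x)_n=\gamma-\varphi(x\restriction n)$ between two \emph{eventually constant} rational sequences, and the resulting dimension is then determined by the common limit.

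Second, ``adjoining a fixed point to avoid extinction'' does not give what you need. A single percolation $K\cap\Gamma$ satisfies $\dim_H(K\cap\Gamma)\ge\gamma-\alpha-\eps$ only with \emph{positive} probability, not almost surely: the percolation may die before it gets going. The paper fixes this by running $i_k$ independent coupled percolations inside a nested sequence of cubes $Q_k\to\{y_0\}$ where $y_0$ has full local dimension (Fact~\ref{f:dimK}), choosing $i_k$ via Lemma~\ref{l:H} so that each cube hits $K$ in a set of dimension $\geq\beta_k-\alpha$ with probability $\geq 1/2$, and taking the union over $k$ and $i$. Without this, the lower bound fails.

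Third, the map $y\mapsto\Gamma_y(\omega)$ as you define it is \emph{not} Hausdorff-continuous: an arbitrarily small change in the retention probabilities can switch a cube from surviving to dying, which drastically changes the limit set. The paper's fix is to append to each $\Gamma^k_i$ a countable ``skeleton'' set $F^k_i$ consisting of marked points from cubes that were kept but whose subtree died; this turns the union $\Gamma^{*}_{k,i}=F^k_i\cup\Gamma^k_i$ into a compact set depending Lipschitz-continuously (in $d_H$) on $x\restriction n$. That skeleton, not a single adjoined point, is what makes $\iC$ compact.

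Finally, your proposed resolution of simultaneity by ``uniform martingale tail estimates plus semicontinuity'' is vague and the semicontinuity reference is misplaced ($\dim_H$ is neither upper nor lower semicontinuous on $\iK(K)$). The paper uses a cleaner and more elementary device: because all percolations share the same coupling variables $u^k_i(D)$, one has the \emph{almost sure, simultaneous} monotonicity $\Gamma^*(\{\alpha_n\})\subset^{\star}\Gamma^*(\{\alpha'_n\})$ whenever $\alpha_n\geq\alpha'_n$ for all $n$. One then proves the dimension identity $\dim_H(K\cap\Gamma^*(\{\alpha_n\}))=\gamma-\alpha$ almost surely for all sequences in the \emph{countable} set $Q^*$ of eventually constant rational sequences, and squeezes every $\alpha(x)$ between nearby members of $Q^*$. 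This replaces any quantitative fluctuation estimate and is the actual content of the ``coupling'' mentioned in the abstract.
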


In Subsection~\ref{ss:box} we consider the analogous problems for box and packing dimensions. In case of lower box dimension the analogue of Theorem~\ref{t:compact family} does not hold. The following theorem of Feng, Wen, and Wu \cite[Theorem~3]{FWW} demonstrates that $\underline{\dim}_B$ even fails to satisfy the Darboux property. 

\begin{theorem}[Feng--Wen--Wu] There exists a compact set $K\subset [0,1]$ such that $\underline{\dim}_B \, K=1$ and any set $E\subset K$ satisfies $\underline{\dim}_B \, E\in \{0,1\}$. 
\end{theorem}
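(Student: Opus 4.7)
The plan is to construct $K$ as a countable union of ``arithmetic-progression blocks'' $B_k$ tending to $0$, with block lengths and spacings tuned so that (a)~$N_r(K)\gtrsim r^{-1+o(1)}$ at every small scale (forcing $\underline{\dim}_B K=1$), while (b)~the internal geometry of each block is so rigid that any subset either inherits essentially the maximal covering count at every scale or fails to do so drastically. Concretely, I would choose a very fast-growing sequence $(n_k)$ and for each $k$ take $B_k$ to be an arithmetic progression of $n_k$ points with diameter $d_k$ and spacing $s_k = d_k/(n_k-1)$, placed disjointly in $[0,1]$. The parameters $d_k$ and $s_k$ are chosen so that the ``internal ranges'' $(s_k, d_k)$ of the different blocks cover all small scales and interlock properly: for every small $r$ there is some block $B_k$ with $r \in (s_k, d_k)$ contributing $d_k/r$ to $N_r(K)$, while the remaining blocks contribute collectively at the boundary scales.

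For the dichotomy, given $E \subset K$ write $a_k = |E \cap B_k|$. At the internal scales $r \in (s_k, d_k)$ of the $k$-th block one has $N_r(E) \leq a_k + \sum_{j \neq k} N_r(B_j)$, and the ``other blocks'' contribute only a lower-order term by the choice of parameters. Consequently $\log N_r(E)/\log(1/r)$ at scales near $s_k$ is essentially $\log a_k/\log(1/s_k)$. By selecting the $n_k$ with extreme lacunarity (e.g.\ taking $\log(1/s_k)$ to grow like a tower function of $k$), one arranges that this ratio either exceeds $1 - \eps$ for all large $k$ or falls below $\eps$ for infinitely many $k$, with no intermediate possibility. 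If the first alternative holds for every $\eps > 0$, then $\underline{\dim}_B E = 1$; if the second alternative holds for some $\eps > 0$, then $\underline{\dim}_B E \leq \eps$, and letting $\eps \to 0$ yields $\underline{\dim}_B E = 0$.

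The principal obstacle is verifying the strict dichotomy: a naive attempt to produce $\underline{\dim}_B E \in (0,1)$ by choosing $a_k \approx n_k^\alpha$ for some $\alpha \in (0,1)$ must be foreclosed by the lacunarity of the construction. The proof therefore rests on a careful combinatorial estimate showing that any ``partial'' density $a_k \in [n_k^\alpha, n_k^{1-\alpha}]$ in block $B_k$ is detected by an immediately subsequent ``gap'' scale at which $N_r(E)$ is forced to drop to near-trivial size. Calibrating the $d_k$, the $s_k$, and the positions of the blocks so that such detection scales exist for every possible subset — without destroying the lower box dimension of $K$ itself — is the heart of the argument, and will likely require a two-parameter balance between the logarithmic rate of growth of $n_k$ and the logarithmic rate of decrease of the inter-block gaps.
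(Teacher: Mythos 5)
The paper does not prove this statement: it quotes it from Feng--Wen--Wu \cite{FWW}, so there is no internal argument to compare against and the proposal must stand on its own. Read that way, there is a genuine gap precisely at the step you flag as ``the heart of the argument.'' The dichotomy does not follow from lacunarity of $(n_k)$ when the blocks $B_k$ are plain arithmetic progressions, because the adversary's freedom to produce an intermediate lower box dimension lives \emph{inside} a single block and is insensitive to how sparse the sequence $n_k$ is. Concretely, take $n_k=4^{m_k}$ and let $E\cap B_k$ consist of the points of $B_k$ whose index uses only the digits $0,1$ in base $4$; then $N_r(E\cap B_k)\asymp N_r(B_k)^{1/2}$ at \emph{every} scale $r$. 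Summing over blocks gives $N_r(E)\gtrsim N_r(K)^{1/2}$ throughout, so $\underline{\dim}_B\, E\ge \tfrac12\underline{\dim}_B\, K=\tfrac12$, and at the critical scales $r\approx s_k$ one also gets the matching upper bound $N_{s_k}(E)\lesssim n_k^{1/2}\,(\log n_k)^{O(1)}$ against $\log(1/s_k)\approx\log n_k$, pinning $\underline{\dim}_B\, E=\tfrac12$. No tower-type growth of $n_k$ removes this; the construction has to forbid such self-similarly thinned subsets, and arithmetic progressions cannot.

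There is also a structural tension in the ``detection at a gap scale'' mechanism. A scale $r$ just below $s_k$ at which $N_r(E)$ is forced to drop ``to near-trivial size'' has to be a scale at which the contribution of all remaining blocks is negligible; but $\underline{\dim}_B\, K=1$ demands $N_r(K)\ge r^{-1+o(1)}$ at \emph{every} small $r$, which forces the scale ranges of the blocks to overlap, so the next block already contributes at that $r$, and a proportionally thinned subset of it contributes proportionally. The dip therefore never materialises without simultaneously destroying $\underline{\dim}_B\, K=1$. A working construction needs a hierarchical (Moran/tree-type) set whose rigid branching structure makes the covering number of an \emph{arbitrary} subset at a sparse set of critical scales controlled by a single surviving-branch count, and it is the lacunarity of those critical scales --- not of the point counts $n_k$ --- that forces the $\liminf$ to collapse to $0$ or stay at $1$. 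The proposal stops well short of identifying or establishing such a constraint.
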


For upper box dimension an analogous version of Theorem~\ref{t:compact family} holds. 

\begin{cbox} Let $K$ be a non-empty compact metric space and $A \subset [0, \overline{\dim}_B \, K]$. Then the following are equivalent: 
\begin{enumerate}
\item There is a compact set $\mathcal{C} \subset \mathcal{K}(K)$ with $\{\overline{\dim}_B \, C : C \in \mathcal{C}\} = A$;
\item $A$ is an analytic set. 
\end{enumerate} 
\end{cbox}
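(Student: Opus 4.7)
For direction (1)$\Rightarrow$(2), the plan is to show that $\overline{\dim}_B \colon \mathcal{K}(K) \to [0, \overline{\dim}_B K]$ is Borel measurable, so that the image of any compact (hence analytic) subset is analytic. For each $\varepsilon > 0$ and $k \in \N$, the set $\{C \in \mathcal{K}(K) : N(C, \varepsilon) \leq k\}$, with $N$ counting closed $\varepsilon$-balls, is closed: a Hausdorff limit of $k$-coverable sets is itself $k$-coverable by extracting a convergent subsequence of the ball centers via compactness of $K$. Hence $N(\cdot, \varepsilon)$ is upper semicontinuous and $\overline{\dim}_B = \limsup_n \log N(\cdot, 1/n)/\log n$ is Borel.

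For direction (2)$\Rightarrow$(1), I would construct the compact family explicitly. The trivial cases ($A = \emptyset$, or $\overline{\dim}_B K = 0$ and $A = \{0\}$) are immediate. Otherwise, write $s := \overline{\dim}_B K > 0$ and first locate a basepoint $x_0 \in K$ of full local upper box dimension---i.e.\ $\overline{\dim}_B(K \cap B(x_0, r)) = s$ for every $r > 0$---which exists by a standard pigeonhole/compactness argument, since the upper box dimension of a finite union equals the maximum. Choose scales $n_1 < n_2 < \cdots$ growing so rapidly that $\log k / n_k \to 0$, and radii $r_k \downarrow 0$ with $r_{k+1} \ll 2^{-n_k}$, such that $K \cap B(x_0, r_k)$ contains at least $2^{n_k(s - 1/k)}$ many $2^{-n_k}$-separated points. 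Fix a continuous surjection $f \colon \N^\N \to A$ and set $t_\sigma := \min\bigl(\inf_{\beta \in [\sigma]} f(\beta),\, s - 1/|\sigma|\bigr)$ for $\sigma \in \N^{<\N}$, so that $t_{\alpha|k} \uparrow f(\alpha)$ for every $\alpha \in \N^\N$. For each $\sigma$ of length $k$, choose $P_\sigma \subset K \cap B(x_0, r_k)$ consisting of $\lceil 2^{n_k t_\sigma} \rceil$ many $2^{-n_k}$-separated points and set
\[
C_\alpha := \{x_0\} \cup \bigcup_{k \geq 1} P_{\alpha|k}.
\]
A direct count at scale $2^{-n_k}$---with levels $j > k$ collapsed into a single ball by $r_{k+1} \ll 2^{-n_k}$, and the bound $\sum_{j \leq k} 2^{n_j t_{\alpha|j}} \leq k \cdot 2^{n_k f(\alpha)}$ from $t_{\alpha|j} \leq f(\alpha)$ and rapid growth of $n_k$---gives $\overline{\dim}_B C_\alpha = f(\alpha)$. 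Moreover $\alpha \mapsto C_\alpha$ is continuous: if $\alpha_n \to \alpha$ in $\N^\N$ then $\alpha_n|k = \alpha|k$ eventually, so $d_H(C_{\alpha_n}, C_\alpha) \leq r_{k+1}$. Setting $\mathcal{C} := \overline{\{C_\alpha : \alpha \in \N^\N\}}$ in the compact space $\mathcal{K}(K)$ yields a compact family with $\{\overline{\dim}_B C : C \in \mathcal{C}\} \supseteq A$.

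The principal obstacle is the reverse inclusion. A convergent sequence $C_{\alpha_n} \to C_\infty$ with $(\alpha_n)$ lacking a limit in $\N^\N$ corresponds (on a subsequence) to a limit $\bar\alpha$ in the compactification $\bar\N^\N = (\N \cup \{\infty\})^\N$ having some coordinates equal to $\infty$; the resulting $C_\infty$ is a ``truncated'' set whose upper box dimension may a priori lie outside $A$, since analytic sets need not be closed. Overcoming this is the technical heart of the proof: I would refine the construction to be based on a Souslin-scheme presentation $A = \bigcup_\alpha \bigcap_n F_{\alpha|n}$ with closed $F_\sigma \subset [0,s]$ of shrinking diameters, and redesign the $P_\sigma$ (for instance by drawing them from a Hausdorff-continuous family of candidate sets $Q_k(t)$ parameterised by $t \in [0,s]$) so that for every convergent $C_{\alpha_n} \to C_\infty$ one can associate a canonical infinite extension $\alpha^* \in \N^\N$ of the non-$\infty$ coordinates of $\bar\alpha$ with $\overline{\dim}_B C_\infty = f(\alpha^*)$. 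The correct choice of this canonical extension---i.e., how to break ties at the $\infty$-coordinates while preserving Hausdorff continuity of $\sigma \mapsto P_\sigma$---is the step I expect to carry the main technical weight.
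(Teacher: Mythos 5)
Your direction (1)$\Rightarrow$(2) is fine, and in fact more self-contained than the paper's: you verify directly that $N(\cdot,\varepsilon)$ is upper semicontinuous on $\iK(K)$, so $\overline{\dim}_B$ is Borel, whereas the paper defers to the analogous argument in Theorem~\ref{t:compact family} (which cites Mattila--Mauldin for $\dim_H$). Either route works.

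The gap is exactly where you flag it, and it is genuine rather than a technicality you can defer. Parametrising over the non-compact Baire space $\N^\N$ and then taking the closure of $\{C_\alpha : \alpha \in \N^\N\}$ does not give a set whose dimension-image is $A$: limits along sequences $\alpha_n$ whose coordinates escape to $\infty$ produce sets $C_\infty$ whose upper box dimension is governed by a residual configuration near $x_0$, and there is no reason this residual lands in $A$, since $A$ is only analytic and need not be closed. Your proposed repair via a Souslin scheme $A = \bigcup_\alpha \bigcap_n F_{\alpha|n}$ and a Hausdorff-continuous family $Q_k(t)$ is not carried out, and I do not see how it resolves the problem: if $\alpha_n(k+1)\to\infty$, the associated parameters $t_{\alpha_n|(k+1)}$ can converge to a point of $\overline{A}\setminus A$, and continuity of $t\mapsto Q_k(t)$ then forces the limit set to have a dimension outside $A$. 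Picking a ``canonical infinite extension'' of the finite non-$\infty$ part of $\bar\alpha$ does not fix this, because that extension must be chosen \emph{continuously} in $\bar\alpha$ to be compatible with Hausdorff limits, and no such continuous selection exists in general.

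The paper sidesteps the issue entirely with Lemma~\ref{l:varphi existence}: instead of a continuous surjection $\N^\N\to A$, it produces a map $\varphi\colon 2^{<\omega}\to[0,\infty)$ such that $\overline{\varphi}(x)=\lim_n\varphi(x\restriction n)$ \emph{exists for every} $x\in 2^\omega$ and $\overline{\varphi}(2^\omega)=A$. The domain $2^\omega$ is compact and the map $x\mapsto C(x)$ built from $\varphi$ is continuous, so $\iC=\{C(x):x\in 2^\omega\}$ is already compact with no closure needed and hence no spurious limits. The construction of $\varphi$ is the genuine content: starting from a $G_\delta$ set $G\subset 2^\omega$ and continuous $f\colon G\to A$ (Fact~\ref{f:Gd}), one tracks along each branch whether it is eventually trapped in the $F_\sigma$ complement $F=\bigcup_m F_m$, and if so one ``freezes'' the value of $\varphi$ at some previously-attained element of $A$; along branches escaping to $G$, one lets $\varphi$ track $f$. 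This forced convergence on all of $2^\omega$ is precisely what your $\N^\N$-based plan lacks, and without it the reverse inclusion $\{\overline{\dim}_B\,C:C\in\iC\}\subset A$ fails.

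One further small point: your construction adds the packing $P_\sigma$ at level $k=|\sigma|$ only near the basepoint, whereas the paper adds a packing inside \emph{each} surviving ball for the packing-dimension version (Theorem~\ref{t:comppack}) but, for $\overline{\dim}_B$, only near $y_0$ as you do. That part of your plan matches. The essential missing ingredient is the replacement of $\N^\N$ by $2^\omega$ together with the forced-limit function $\varphi$; without an argument at that level of care, the proof does not close.
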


For packing dimension we will show the following.

\begin{pack}
Let $K$ be a non-empty compact metric space and let $A \subset [0, \dim_P K]$ be analytic. Then there is a compact set $\mathcal{C} \subset \mathcal{K}(K)$ with $\{\dim_P C : C \in \mathcal{C}\} = A$.
\end{pack}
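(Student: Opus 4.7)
The strategy is to adapt the proof of Theorem~\ref{t:compact family} to the packing-dimension setting. First we invoke an \emph{intermediate value property} for packing dimension on compact metric spaces: for every $t \in [0, \dim_P K]$ there is a compact $L \subseteq K$ with $\dim_P L = t$. This follows from the Joyce--Preiss theorem: $\dim_P K > t$ implies the $t$-packing measure of $K$ is infinite, so $K$ has a compact subset of positive finite $t$-packing measure, hence of packing dimension exactly $t$. Applying this with $t = \sup A$, we may assume after passing to a compact subset that $\sup A = \dim_P K$.

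Next, fix a continuous surjection $f \colon \omega^\omega \to A$ and set $a_\sigma := \sup\{f(\tau) : \tau \supseteq \sigma\}$ for each $\sigma \in \omega^{<\omega}$, so that $a_{\tau|_n} \searrow f(\tau)$ along every branch $\tau \in \omega^\omega$. We inductively build a nested Souslin scheme of compact sets $C_\sigma \subseteq K$ with $C_{\sigma'} \subseteq C_\sigma$ whenever $\sigma \subseteq \sigma'$, satisfying $\dim_P C_\sigma = a_\sigma$, and such that each $C_\sigma$ contains a packing-regular (for instance, Ahlfors regular) compact subset of every dimension $t \in \{f(\tau) : \tau \supseteq \sigma\}$, arranged compatibly across the tree. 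For every branch $\tau$, the intersection $C_\tau := \bigcap_n C_{\tau|_n}$ is then the Hausdorff limit of $C_{\tau|_n}$, and we let $\mathcal{C}$ be the closure in $\mathcal{K}(K)$ of $\{C_\tau : \tau \in \omega^\omega\}$, which is compact.

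The crux is verifying $\{\dim_P C : C \in \mathcal{C}\} = A$. For each branch $\tau$, the upper bound $\dim_P C_\tau \leq \inf_n a_{\tau|_n} = f(\tau)$ follows from monotonicity of $\dim_P$ together with the nesting $C_\tau \subseteq C_{\tau|_n}$, while the matching lower bound $\dim_P C_\tau \geq f(\tau)$ comes from the packing-regular core of dimension $f(\tau)$ persisting into $C_\tau$ by the compatible construction. The main technical obstacle is twofold: (a)~engineering the packing-regular cores to be compatibly nested across the uncountable tree so that the dimension-$f(\tau)$ core actually survives the intersection $\bigcap_n C_{\tau|_n}$ for every $\tau$; and (b)~ensuring that every Hausdorff limit arising in the closure $\mathcal{C}$ has packing dimension in $A$, which is arranged by making $\tau \mapsto C_\tau$ continuous in the Baire topology so that Hausdorff limits of $C_{\tau_n}$ coincide with $C_\tau$ for the corresponding limit branch. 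This mirrors, with packing dimension replacing Hausdorff dimension, the coupled fractal percolation construction underlying Theorem~\ref{t:compact family}.
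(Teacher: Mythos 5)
Your proposal has a genuine gap in the way it handles compactness of the family $\mathcal{C}$. You parameterize the Souslin scheme by $\omega^\omega$ and then take the closure of $\{C_\tau : \tau \in \omega^\omega\}$ in $\mathcal{K}(K)$. The proposed remedy for obstacle~(b) --- making $\tau \mapsto C_\tau$ continuous in the Baire topology --- does not control the closure: if $\tau_n(0) \to \infty$ (or any coordinate escapes to infinity), the sequence $\tau_n$ has no convergent subsequence in $\omega^\omega$, yet the compact sets $C_{\tau_n}$ can still converge in the Hausdorff metric to a set with no corresponding branch, and you have no handle on the packing dimension of such a limit. Continuity on a non-compact domain tells you nothing about these escaping limits. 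The paper avoids this entirely by parameterizing over the compact space $2^\omega$: Lemma~\ref{l:varphi existence} is precisely the device that encodes an arbitrary analytic set $A$ as the set of limits $\overline{\varphi}(x) = \lim_n \varphi(x\restriction n)$ along branches of $2^\omega$, so the image family $\{C(x) : x \in 2^\omega\}$ is automatically compact as a continuous image of a compact space, with no closure (and hence no uncontrolled limit points) required. Without something playing the role of that lemma, the $\omega^\omega$-closure approach does not close.

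A secondary issue is that the lower bound $\dim_P C_\tau \geq f(\tau)$ is left resting on the existence of ``compatibly nested packing-regular cores'' across an uncountable tree, which you acknowledge as the main obstacle but do not construct; this is not a routine adaptation. The paper's mechanism is different and more concrete: after shrinking $K$ via Lemma~\ref{l:packing}\eqref{i:ii} so that every non-empty open subset has $\overline{\dim}_B > \beta$, it builds a Cantor scheme of finite unions of balls directly from packings of prescribed size $\lfloor 2^{\varphi(t)\ell_i}\rfloor$ (using Fact~\ref{f:equiv}), and then establishes $\dim_P C(x) \geq \overline{\varphi}(x)$ by applying Lemma~\ref{l:packing}\eqref{i:i} --- every relatively open piece of $C(x)$ inherits a large $\overline{\dim}_B$ from the packing at each stage. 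The Joyce--Preiss route for the initial reduction is plausible, but note that it pushes you toward the case $\sup A = \dim_P K$, which is the harder boundary case; the paper does the opposite, first treating $A \subset [0,\beta]$ with $\beta < \dim_P K$ (where there is slack for the box-counting estimates) and then gluing countably many such families $\iC_n \subset \iK(B(y_0,1/n))$ together with $\{K\}$ and a fixed base set $K_0$ to handle general $A \subset [0,\dim_P K]$.
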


\begin{remark}
Since every analytic set $B\subset \R^d$ contains a compact set $C$ such that $\dim_H C=\dim_H B$ and $\dim_P C=\dim_P B$, Theorems~\ref{t:compact family} and \ref{t:comppack} remain true for compact families of compact subsets of a given analytic set $B\subset \R^d$ as well.
\end{remark}

Finally, we collect the open problems in Section~\ref{s:open}.

\section{Preliminaries} 
Let $d\geq 1$ be an integer. A set $A\subset \R^d$ is \emph{analytic} if there exist a Polish space $X$ and a continuous onto map $f\colon X\to A$. Let $(X,\rho)$ be a Polish space or compact metric space. Let $(\mathcal{K}(X),d_{H})$ be the set of non-empty compact subsets of
$X$ endowed with the \emph{Hausdorff metric}, that is, for each $K_1,K_2\in \mathcal{K}(X)$ we have
\begin{equation*} 
d_{H}(K_1,K_2)=\min \left\{r: K_1\subset B(K_2,r) \textrm{ and } K_2\subset B(K_1,r)\right\},
\end{equation*}
where $B(A,r)=\{x\in X: \exists y\in A \textrm{ such that } \rho(x,y)|\leq r\}$.
Then $(\mathcal{K}(X),d_{H})$ is a Polish space, and it is compact when $X$ is compact, see \cite[Theorems~4.25,~4.26]{Ke}.

Let $X$ be a metric space. For $x\in X$ and $r>0$ let $B(x,r)$ be the closed ball of radius $r$ centered at $x$. For every $s\geq 0$ the \emph{$s$-Hausdorff content} of $X$ is defined as
\begin{equation*} \mathcal{H}^{s}_{\infty}(X)=\inf \left\{ \sum_{i=1}^\infty (\diam
E_{i})^{s} : X\subset \bigcup_{i=1}^{\infty} E_{i} \right\},
\end{equation*}
where $\diam E_i$ denotes the diameter of $E_i$. The \emph{Hausdorff dimension} of $X$ is
\begin{equation*} \dim_{H} X = \inf\{s \ge 0: \mathcal{H}_{\infty}^{s}(X) =0\}.
\end{equation*}
Let $N_n(X)$ be the minimal number of closed balls of radius at most $2^{-n}$ needed to cover $X$. The \emph{lower box dimension} and \emph{upper box dimension} of $X$ are defined as
\begin{equation*}  \underline{\dim}_{B}\, X=\liminf_{n \to \infty} \frac{\log N_{n}(X)}{n\log 2} \quad \text{and} \quad \overline{\dim}_{B} \, X=\limsup_{n\to \infty} \frac{\log N_{n}(X)}{n\log 2},
\end{equation*} 
respectively. The \emph{packing dimension} of $X$ is defined by
\begin{equation*} \dim_P X=\inf \left\{\sup_{i} \overline{\dim}_{B} \, E_i: X=\bigcup_{i=1}^{\infty} E_i\right\}.
\end{equation*}
For the following lemma see \cite[Lemma~2.8.1]{BP}.
\begin{lemma} \label{l:packing}
Let $X$ be a non-empty separable metric space. 
\begin{enumerate}[(i)]
\item \label{i:i} If $X$ is complete and $\overline{\dim}_B \, U\geq \alpha$ for each non-empty open set $U\subset X$, then $\dim_P X\geq \alpha$.
\item \label{i:ii} If $\dim_P X>\alpha$ then there is a closed subset $F\subset X$ such that $\dim_P (F\cap U)>\alpha$ for each open set $U$ intersecting $F$.
\end{enumerate}
\end{lemma}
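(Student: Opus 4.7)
The plan is to prove the two parts by separate classical arguments: part (i) via a Baire category argument, and part (ii) via a ``maximal small-dimension open set'' construction that exploits both countable stability of $\dim_P$ and the second countability of $X$.

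For part (i), I would argue by contradiction. Suppose $\dim_P X<\alpha$; by the definition of packing dimension there is a cover $X=\bigcup_{i=1}^\infty E_i$ with $\sup_i \overline{\dim}_B E_i<\alpha$. A closed ball covering $E_i$ also covers $\overline{E_i}$, so $N_n(E_i)=N_n(\overline{E_i})$ and hence $\overline{\dim}_B E_i=\overline{\dim}_B \overline{E_i}$; replacing each $E_i$ by its closure yields a cover by closed sets with the same dimension bound. Since $X$ is a complete metric space, the Baire category theorem forces some $E_{i_0}$ to have nonempty interior $U$ in $X$. Monotonicity of $\overline{\dim}_B$ then gives $\overline{\dim}_B U\le \overline{\dim}_B E_{i_0}<\alpha$, contradicting the hypothesis $\overline{\dim}_B U\ge \alpha$.

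For part (ii), I would first observe that the displayed definition of $\dim_P$ is countably stable: for any countable family $\{A_j\}$, concatenating covers of the $A_j$ that are near-optimal for $\dim_P A_j$ yields a single cover of $\bigcup_j A_j$ whose supremum dimension is arbitrarily close to $\sup_j \dim_P A_j$, so $\dim_P\bigl(\bigcup_j A_j\bigr)=\sup_j \dim_P A_j$. Since the separable metric space $X$ is second countable, it admits a countable base $\{B_k\}_{k\in\N}$. Set
\begin{equation*}
G=\bigcup\bigl\{B_k: \dim_P B_k\le \alpha\bigr\}\quad\text{and}\quad F=X\setminus G.
\end{equation*}
Countable stability gives $\dim_P G\le \alpha$. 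Moreover, any open $V\subset X$ with $\dim_P V\le \alpha$ is contained in $G$: every point of $V$ has a basic neighborhood $B_k\subset V$ with $\dim_P B_k\le \dim_P V\le \alpha$, so $B_k\subset G$. From $X=F\cup G$, countable stability, and $\dim_P G\le \alpha<\dim_P X$, we obtain $\dim_P F>\alpha$, in particular $F\ne\emptyset$. For any open $U$ meeting $F$ we have $U\not\subset G$, so by the observation just made $\dim_P U>\alpha$; decomposing $U=(F\cap U)\cup(G\cap U)$ and using countable stability together with $\dim_P(G\cap U)\le\dim_P G\le\alpha$ forces $\dim_P(F\cap U)>\alpha$, as required.

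I do not anticipate a serious obstacle. The only real care needed is verifying, directly from the definitions in this section, that $\overline{\dim}_B$ is invariant under closure (used in the reduction in (i)) and that $\dim_P$ is countably stable (used repeatedly in (ii)); both are short but essential for the arguments to close up cleanly.
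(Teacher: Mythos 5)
The paper does not give its own proof of this lemma; it simply cites Bishop--Peres \cite[Lemma~2.8.1]{BP}. Your blind proof is correct, and it is in fact the standard argument one finds in that reference: a Baire-category argument for part~(i) (replace the $E_i$ by their closures, which have the same upper box dimension since a finite union of closed balls is closed; then some $\overline{E_{i_0}}$ has interior by completeness) and, for part~(ii), the ``kernel'' construction that removes the maximal open set of packing dimension $\le\alpha$, using a countable base together with the countable stability of $\dim_P$. All the intermediate claims you flag (closure-invariance of $\overline{\dim}_B$, countable stability of $\dim_P$) are correct and you justify them adequately, and the final step $\dim_P U=\max\bigl(\dim_P(F\cap U),\dim_P(G\cap U)\bigr)>\alpha$ with $\dim_P(G\cap U)\le\alpha$ closes the argument cleanly. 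I see no gaps.
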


The next fact easily follows from the finite stability of the dimensions in question. 

\begin{fact} \label{f:dimK}
Let $\dim$ be one of $\dim_H$, $\overline{\dim}_B$, or $\dim_P$. Let $K$ be a non-empty compact metric space. Then there exists $z\in K$ such that $\dim B(z,r)=\dim K$ for all $r>0$. 
\end{fact}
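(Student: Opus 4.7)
The plan is to run a contrapositive-style argument that rests on nothing beyond monotonicity of $\dim$ and finite stability $\dim(A_1 \cup \dots \cup A_n) = \max_i \dim A_i$, both of which hold for each of $\dim_H$, $\overline{\dim}_B$, and $\dim_P$. Setting $s = \dim K$, I would suppose toward a contradiction that no $y \in K$ satisfies $\dim B(y, r) = s$ for every $r > 0$; since $B(y,r) \subseteq K$ forces $\dim B(y,r) \leq s$ automatically, this supposition means that to each $y$ we can attach a radius $r_y > 0$ with $\dim B(y, r_y) < s$.

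The second step is a compactness reduction. The open balls of radius $r_y$ centred at $y \in K$ form an open cover of $K$, so finitely many suffice: there exist $y_1, \dots, y_n \in K$ with $K \subseteq \bigcup_{i=1}^n B(y_i, r_{y_i})$. Finite stability then yields
\[
s = \dim K \leq \max_{1 \leq i \leq n} \dim B(y_i, r_{y_i}) < s,
\]
the desired contradiction. Hence some $y \in K$ has $\dim B(y, r) \geq s$ for every $r > 0$, and the trivial bound $\dim B(y,r) \leq s$ upgrades this to the claimed equality.

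There is no genuine obstacle; the whole argument is a one-screen compactness reduction, exactly as the paper's preamble to the fact suggests. The only background point to note explicitly is finite stability of the three dimensions: this is immediate from the definitions via Hausdorff and packing measures for $\dim_H$ and $\dim_P$ (both are in fact countably stable), and for $\overline{\dim}_B$ it follows from the elementary inequality $N_n(A \cup B) \leq N_n(A) + N_n(B)$ applied inside the $\limsup$ defining the upper box dimension.
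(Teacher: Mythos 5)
Your proof is correct and is essentially the argument the paper has in mind: the paper gives no explicit proof but says the fact "easily follows from the finite stability of the dimensions in question," which is precisely the compactness-plus-finite-stability contradiction you run. Note also that your argument correctly explains why $\underline{\dim}_B$ is absent from the statement, since lower box dimension fails finite stability.
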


For the next claim see \cite[Product formulas 7.2 and 7.5]{Fa}.
\begin{claim} \label{c:product}
Let $A\subset \R^d$ and $B\subset \R^m$ be compact sets. Then 
\begin{equation*} \dim_H A+\dim_H B\leq \dim_H(A\times B)\leq  \overline{\dim}_B \, (A\times B) \leq  \overline{\dim}_B \, A+\overline{\dim}_B \, B.
\end{equation*}  
\end{claim}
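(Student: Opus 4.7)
The middle inequality $\dim_H(A\times B)\le \overline{\dim}_B(A\times B)$ is immediate from Fact~\ref{f:ineq}, so the plan is to prove the two outer inequalities, following the standard arguments behind \cite[Product formulas 7.2 and 7.5]{Fa}.

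For the upper box dimension estimate I would argue by direct counting. Given $n\ge 1$, pick optimal covers of $A$ and $B$ by $N_n(A)$, respectively $N_n(B)$, closed balls of radius $2^{-n}$. Their Cartesian products cover $A\times B$, and each product, viewed inside $\R^{d+m}$ with the Euclidean metric, has diameter at most $2\sqrt{2}\cdot 2^{-n}$; hence each such product can in turn be covered by a bounded constant $C=C_{d,m}$ of closed balls of radius $2^{-n}$ in $\R^{d+m}$. This gives $N_n(A\times B)\le C\,N_n(A)N_n(B)$; taking logarithms, dividing by $n\log 2$, and passing to $\limsup$ immediately yields $\overline{\dim}_B(A\times B)\le \overline{\dim}_B A+\overline{\dim}_B B$.

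For the Marstrand-type lower bound $\dim_H A+\dim_H B\le \dim_H(A\times B)$ I would invoke Frostman's lemma together with the mass distribution principle. Fix arbitrary $0\le s<\dim_H A$ and $0\le t<\dim_H B$ (the cases $s=0$ or $t=0$ are trivial). Since $A$ and $B$ are compact, Frostman's lemma supplies positive finite Borel measures $\mu$ on $A$ and $\nu$ on $B$, and a constant $C>0$, with $\mu(B(x,r))\le Cr^s$ and $\nu(B(y,r))\le Cr^t$ for all $x\in\R^d$, $y\in\R^m$, and $r>0$. Because every ball $B((x,y),r)\subset \R^{d+m}$ is contained in the product $B(x,r)\times B(y,r)$, Fubini yields
$$(\mu\times\nu)\bigl(B((x,y),r)\bigr)\le \mu(B(x,r))\cdot \nu(B(y,r))\le C^{2} r^{s+t}.$$
The mass distribution principle applied to $\mu\times\nu$ on $A\times B$ then gives $\dim_H(A\times B)\ge s+t$, and letting $s\nearrow \dim_H A$, $t\nearrow \dim_H B$ finishes the proof.

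The only ingredient that is not an elementary counting or covering manipulation is the availability of Frostman measures with the prescribed polynomial decay; this is the one (mild) obstacle, and it is precisely where the compactness of $A$ and $B$ enters essentially.
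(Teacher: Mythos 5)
Your proof is correct and is essentially the standard argument that the paper cites via \cite[Product formulas 7.2 and 7.5]{Fa}: a counting/covering estimate for the upper box dimension bound and a Frostman measure plus mass distribution principle argument for the Hausdorff dimension lower bound. The paper offers no independent proof, only the citation, and your write-up matches that cited route.
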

\noindent Consult~\cite{Fa} or \cite{Ma} for more on these concepts.

\bigskip

For any $x\in 2^{\omega}$ define the compact set $K(x)\subset [0,1]$ as
\begin{equation*} 
K(x)=\left\{\sum_{i=0}^{\infty} a_i 2^{-i-1}: a_i=0 \text{ if } x(i)=0 \text{ and } a_i\in \{0,1\} \text{ if } x(i)=1\right\}.
\end{equation*}
For $n\in \omega$ let $x\restriction n$ be the restriction of $x$ to its first $n$ coordinates. We endow $2^{\omega}$ with a metric $d$ compatible with the product topology defined as 
\begin{equation*} d(x,y)=2^{-\min\{i: x(i)\neq y(i)\}} \quad \text{for all } x,y\in 2^{\omega}.
\end{equation*}
The following fact is straightforward. 
\begin{fact} \label{f:cont} 
The map $K\colon 2^{\omega}\to \iK([0,1])$ mapping $x$ to $K(x)$ is continuous, more precisely, 
\begin{equation*} d_H(K(x),K(y))\leq d(x,y)  \quad \text{for all } x,y\in 2^{\omega}.
\end{equation*}
\end{fact}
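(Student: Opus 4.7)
The plan is to prove both inclusions $K(x) \subset B(K(y), d(x,y))$ and $K(y) \subset B(K(x), d(x,y))$ by an explicit digit-swapping construction, after which the Hausdorff-metric bound follows from the definition.

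Set $n = \min\{i : x(i) \neq y(i)\}$, so that $d(x,y) = 2^{-n}$ and $x \restriction n = y \restriction n$. Given an arbitrary point $z = \sum_{i=0}^{\infty} a_i 2^{-i-1} \in K(x)$, I would build a companion point $w = \sum_{i=0}^{\infty} b_i 2^{-i-1} \in K(y)$ by setting $b_i = a_i$ for $i < n$ and $b_i = 0$ for $i \geq n$. The first choice is admissible for $K(y)$ precisely because $y(i) = x(i)$ for $i < n$, so the digit constraints imposed by $y$ on positions $0, \ldots, n-1$ coincide with those imposed by $x$; the second choice is admissible because $0$ is always a legal digit under the definition of $K(y)$. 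Thus $w \in K(y)$.

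The distance bound is then just a tail estimate:
\begin{equation*}
|z - w| = \left|\sum_{i \geq n} (a_i - b_i) 2^{-i-1}\right| \leq \sum_{i \geq n} 2^{-i-1} = 2^{-n} = d(x,y).
\end{equation*}
This shows $K(x) \subset B(K(y), d(x,y))$; by the symmetry of the construction in $x$ and $y$, the reverse inclusion holds as well, and the definition of the Hausdorff metric yields $d_H(K(x), K(y)) \leq d(x,y)$.

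There is no real obstacle here; the only point that requires a moment of care is checking that the truncated digit sequence $(b_i)$ is a legal coding for $K(y)$, which boils down to the agreement $x \restriction n = y \restriction n$ and the fact that the all-zero tail is permissible for any $y$.
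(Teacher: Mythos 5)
Your proof is correct and is precisely the straightforward argument the paper has in mind when it states this fact without proof: zero out the tail of the digit sequence from the first index where $x$ and $y$ disagree, observe that this lands in $K(y)$ because the constraints agree on the initial segment and $0$ is always an admissible digit, and bound the discrepancy by the geometric tail $\sum_{i\geq n}2^{-i-1}=2^{-n}=d(x,y)$. The symmetry in $x$ and $y$ then gives both inclusions needed for the Hausdorff bound.
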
 
Define the left shift $T\colon 2^{\omega} \to  2^{\omega}$ as
\begin{equation*} 
T(x)(n)=x(n+1) \text{ for all } n\in \omega.
\end{equation*} 
We define the \emph{lower density} and \emph{upper density of $x$} by
\begin{equation*} 
\underline{\varrho}(x)=\liminf_{n \to \infty} \frac{\sum_{i=0}^{n-1} x(i)}{n} \quad \text{and} \quad \overline{\varrho}(x)=\limsup_{n \to \infty} \frac{\sum_{i=0}^{n-1} x(i)}{n},
\end{equation*}    
respectively. If $\underline{\varrho}(x)=\overline{\varrho}(x)$ then the common value $\varrho(x)$ is called the \emph{density of $x$}. For the following claim see e.g.~\cite[Example~3.2.3]{BP}.
\begin{claim} \label{cl:1}
For any $x\in 2^{\omega}$ we have 
\begin{equation*} 
\dim_H K(x)=\underline{\dim}_B \, K(x)=\underline{\varrho}(x) \quad \text{and} \quad \dim_P K(x)=\overline{\dim}_B \, K(x)=\overline{\varrho}(x).
\end{equation*}
\end{claim}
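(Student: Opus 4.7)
The plan is to extract all four dimensions from a single counting estimate and then bootstrap to the Hausdorff and packing dimensions via the mass distribution principle and Lemma~\ref{l:packing}(i), respectively.

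Set $s_n(x)=\sum_{i<n} x(i)$ and let $\iD_n$ be the collection of closed dyadic intervals $[k2^{-n},(k+1)2^{-n}]$ whose left endpoint has binary representation $0.b_0\ldots b_{n-1}$ satisfying $b_i=0$ whenever $x(i)=0$. Then $|\iD_n|=2^{s_n(x)}$, every element of $\iD_n$ meets $K(x)$, and $K(x)\subseteq \bigcup \iD_n$. Since a closed ball of radius $2^{-n}$ in $\R$ has diameter $2^{1-n}$ and therefore meets at most three members of $\iD_n$, we obtain
\begin{equation*}
\tfrac{1}{3}\cdot 2^{s_n(x)} \leq N_n(K(x)) \leq 2^{s_n(x)},
\end{equation*}
which immediately gives $\underline{\dim}_B K(x)=\underline{\varrho}(x)$ and $\overline{\dim}_B K(x)=\overline{\varrho}(x)$.

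For the Hausdorff dimension, define a probability measure $\mu$ on $K(x)$ by taking independent $\mathrm{Bernoulli}(1/2)$ digits at each position $i$ with $x(i)=1$. Then $\mu(I)=2^{-s_n(x)}$ for every $I\in \iD_n$, so any set $E$ of diameter $\delta \in (2^{-n-1},2^{-n}]$, being contained in at most two members of $\iD_n$, satisfies $\mu(E)\leq 2\cdot 2^{-s_n(x)}$. For any $s<\underline{\varrho}(x)$ this becomes $\mu(E)\leq 2\delta^s$ for all small $\delta$, so the mass distribution principle yields $\mathcal{H}^s_\infty(K(x))>0$, hence $\dim_H K(x)\geq s$. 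Combined with $\dim_H K(x)\leq \underline{\dim}_B K(x)$ from Fact~\ref{f:ineq} and letting $s\to \underline{\varrho}(x)$, we obtain the first chain of equalities.

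For the packing dimension we apply Lemma~\ref{l:packing}(i) to the compact (hence complete) set $K(x)$. Any non-empty relatively open $U\subseteq K(x)$ contains $K(x)\cap I$ for some $I=[k2^{-n},(k+1)2^{-n}]\in \iD_n$ with $n$ sufficiently large, and a direct inspection of binary expansions shows $K(x)\cap I = k2^{-n} + 2^{-n}K(T^n x)$ up to at most two isolated endpoints, which are irrelevant for upper box dimension. Since $\overline{\varrho}$ is manifestly shift-invariant, the box-dimension identity already proved gives $\overline{\dim}_B (K(x)\cap I) = \overline{\varrho}(T^n x) = \overline{\varrho}(x)$, so $\overline{\dim}_B U\geq \overline{\varrho}(x)$, and Lemma~\ref{l:packing}(i) produces $\dim_P K(x)\geq \overline{\varrho}(x)$; the reverse inequality is immediate from Fact~\ref{f:ineq}. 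The only step needing genuine care is the binary-expansion identification $K(x)\cap I = k2^{-n}+2^{-n}K(T^n x)$ (modulo at most two points), which is a routine verification; every other step invokes tools already set up in the preliminaries.
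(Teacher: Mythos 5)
Your proof is correct. The paper does not prove Claim~\ref{cl:1} at all; it simply cites \cite[Example~3.2.3]{BP}, so there is no ``paper's own proof'' to compare against line by line. Your argument is a clean, self-contained verification using exactly the three tools one would expect: a direct box-counting estimate $\frac{1}{3}\cdot 2^{s_n(x)}\leq N_n(K(x))\leq 2^{s_n(x)}$ for both box dimensions, the mass distribution principle with the Bernoulli measure for the Hausdorff lower bound, and the ``open sets have large upper box dimension'' criterion of Lemma~\ref{l:packing}(i) together with the self-similarity $K(x)\cap I = k2^{-n}+2^{-n}K(T^n x)$ (up to at most one boundary point, not two --- the left endpoint always lies in the scaled copy) and the shift-invariance of $\overline{\varrho}$ for the packing lower bound. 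This is essentially the argument in Bishop--Peres, just stated for the specific family $K(x)$.

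Two small points you glossed over, neither of which is a genuine gap. First, the mass-distribution step: from $\mu(E)\leq 2\delta^s$ only for small $\delta$, one still needs a uniform constant to conclude $\mathcal{H}^s_\infty(K(x))>0$ under the definition the paper uses (Hausdorff content, which allows arbitrarily large covering sets); this is handled by noting $\mu(E)\leq 1\leq \delta_0^{-s}(\diam E)^s$ whenever $\diam E\geq \delta_0$, so a single constant works for all $E$. Second, the assertion that a relatively open $U\subseteq K(x)$ contains $K(x)\cap I$ for some $I\in\iD_n$ deserves one sentence: pick $y\in U\cap K(x)$ and $r>0$ with $B(y,r)\cap K(x)\subset U$, take $n$ with $2^{-n}<r/2$, and let $I$ be a member of $\iD_n$ containing $y$ (choosing, say, the one determined by a binary expansion of $y$ obeying the constraint); then $I\subset B(y,r)$ and $K(x)\cap I$ is non-empty. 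With these remarks the argument is complete.
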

Claims~\ref{c:product} and \ref{cl:1} allow us to calculate the dimensions of the Cartesian product $K(x)^d$ as follows. 
\begin{fact} \label{f:dim}
Let $\dim$ be one of $\dim_H$, $\underline{\dim}_B$, $\overline{\dim}_B$ or $\dim_P$. Assume that $d \ge 1$ and $x\in 2^{\omega}$ are given such that $\varrho(x)$ exists. Then 
\begin{equation*} \dim K(x)^d=d\varrho(x). 
\end{equation*} 
\end{fact}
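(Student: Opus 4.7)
The plan is to squeeze all four dimensions between matching lower and upper bounds, both of which equal $d\varrho(x)$, using only the already-stated Claims~\ref{c:product} and~\ref{cl:1} and Fact~\ref{f:ineq}. Since $\varrho(x)$ exists, $\underline{\varrho}(x) = \overline{\varrho}(x) = \varrho(x)$, and so Claim~\ref{cl:1} yields
\begin{equation*}
\dim_H K(x) \;=\; \underline{\dim}_B \, K(x) \;=\; \overline{\dim}_B \, K(x) \;=\; \dim_P K(x) \;=\; \varrho(x).
\end{equation*}

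Next I would apply Claim~\ref{c:product} inductively to the $d$-fold product $K(x)^d = K(x) \times K(x)^{d-1}$. The lower bound in Claim~\ref{c:product} gives $\dim_H K(x)^d \geq d \cdot \dim_H K(x) = d\varrho(x)$, while the upper bound gives $\overline{\dim}_B \, K(x)^d \leq d \cdot \overline{\dim}_B \, K(x) = d\varrho(x)$. Combining these with the sandwich inequality $\dim_H K(x)^d \leq \overline{\dim}_B \, K(x)^d$ from Fact~\ref{f:ineq} forces
\begin{equation*}
\dim_H K(x)^d \;=\; \overline{\dim}_B \, K(x)^d \;=\; d\varrho(x).
\end{equation*}

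Finally, Fact~\ref{f:ineq} gives both $\dim_H K(x)^d \leq \underline{\dim}_B \, K(x)^d \leq \overline{\dim}_B \, K(x)^d$ and $\dim_H K(x)^d \leq \dim_P K(x)^d \leq \overline{\dim}_B \, K(x)^d$, so the two remaining dimensions $\underline{\dim}_B$ and $\dim_P$ of $K(x)^d$ are also pinched to $d\varrho(x)$, completing the proof. There is no real obstacle here: the statement is essentially a direct corollary of the earlier claims, and the only care needed is to verify that the product inequality in Claim~\ref{c:product}, stated for two factors, can be iterated to $d$ factors, which is immediate since each intermediate product $K(x)^k$ is compact.
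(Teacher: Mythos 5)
Your argument is exactly the one the paper intends: the paper offers no explicit proof beyond the sentence that Claims~\ref{c:product} and~\ref{cl:1} ``allow us to calculate the dimensions of the Cartesian product $K(x)^d$,'' and your squeeze via the product inequalities plus Fact~\ref{f:ineq} is the standard way to make that precise. The iteration to $d$ factors is indeed immediate since each $K(x)^k$ is compact, so there is nothing to add.
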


We denote the set of finite binary sequences by $2^{<\omega}$. For $s,t\in 2^{<\omega}$ let $s^\frown t\in 2^{<\omega}$ denote the concatenation of $s$ and $t$. Let $\lfloor \cdot \rfloor$ denote the floor function.

\section{Range of dimensions of microsets} \label{s:character}

The goal of this section is to prove Theorem~\ref{t:characterization} after some preparation. 

\subsection{A description of the microsets of $K(x)^d$} The goal of this subsection is to prove Theorem~\ref{t:Cx}, which provides a useful tool to work with the microsets of $K(x)^d$. For technical reasons we generalize $\iM_K$ as follows. 

\begin{definition}
	For $d\geq 1$ and $\iF\subset \iK(\R^d)$ define $\iM(\iF)$ as the set of compact sets $E\subset [0,1]^d$ for which $E\cap (0,1)^d\neq \emptyset$ and there exist $K_n\in \iF$, $\lambda_n\geq 1$, and $u_n\in \R^d$ such that $(\lambda_n K_n+u_n)\cap [0,1]^d\to E$. 
\end{definition}	

\begin{fact} \label{f:subset} Let $d\geq 1$ and assume that $C_n\subset K_n\subset \R^d$ are compact sets such that $C_n\to C$ and $K_n\to K$. Then $C\subset K$.
\end{fact}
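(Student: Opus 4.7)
The plan is the standard one-line argument that inclusion is preserved under Hausdorff limits: pick an arbitrary point $x \in C$ and exhibit a sequence in $K$ converging to $x$, then invoke closedness of $K$.

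First, I would use the definition of the Hausdorff metric to extract approximating sequences. Since $C_n \to C$ in $d_H$, for each $x \in C$ we have $\rho(x, C_n) \leq d_H(C_n, C) \to 0$, so by compactness of $C_n$ there exist $x_n \in C_n$ with $\rho(x, x_n) \to 0$, i.e.\ $x_n \to x$.

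Next, I would transfer to $K$. By hypothesis $x_n \in C_n \subset K_n$, and $d_H(K_n, K) \to 0$, so for each $n$ I can pick $y_n \in K$ with $\rho(x_n, y_n) \leq d_H(K_n, K) + 1/n$. The triangle inequality then gives
\[
\rho(y_n, x) \leq \rho(y_n, x_n) + \rho(x_n, x) \to 0,
\]
hence $y_n \to x$. Since $K$ is compact, hence closed, we conclude $x \in K$, which gives $C \subset K$.

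There is no real obstacle here; this is purely a routine unfolding of the definition of $d_H$ together with the compactness (closedness) of $K$. The only thing to be mindful of is that the existence of the approximating $x_n \in C_n$ and $y_n \in K$ uses compactness of $C_n$ and $K$ respectively to realize the distance-to-set as an actual distance to a point (or one can simply pick points achieving the infimum up to $1/n$, avoiding any appeal to compactness beyond closedness of $K$).
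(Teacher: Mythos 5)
Your proof is correct. The paper states this as a Fact without supplying an argument, so there is no proof to compare against; your routine unwinding of the Hausdorff-metric definition (approximate $x\in C$ by points $x_n\in C_n\subset K_n$, approximate those by points $y_n\in K$, and conclude by closedness of $K$) is exactly the standard justification one would expect to be left implicit.
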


\begin{definition}
	For $x\in 2^{\omega}$ and $n\in \omega$ define the finite set
	\begin{equation*} 
	F_n(x)=\left\{\sum_{i=0}^{n-1} a_i 2^{-i-1}: a_i=0 \text{ if } x(i)=0 \text{ and } a_i\in \{0,1\} \text{ if } x(i)=1\right\}
	\end{equation*}
	and let
	\begin{equation*} 
	\iD_n(x)=\{2^{-n}K(T^n(x))+u: u\in F_n(x)\}.
	\end{equation*}
	Clearly, for each $n$ we have $\bigcup \iD_n(x)=K(x)$ and elements of $\iD_n(x)$ have pairwise non-overlapping convex hulls.
\end{definition}

The next lemma explains that microsets of the family $\{K(x): x\in 2^{\omega}\}$ are `not very far' from being just the sets of the form $K(x)$.

\begin{lemma} \label{l:main} 
	Let $E\in \iM(\{K(x): x\in 2^{\omega}\})$. Assume $(\lambda_n K(x_n)+u_n)\cap [0,1]\to E$ for some $\lambda_n\geq 1$, $x_n\in 2^{\omega}$, and $u_n\in \R$. Then there exist $x\in 2^{\omega}$, $c\in \R^+$, $m_n\in \omega$ for all $n$, a subsequence of positive integers $k_n\uparrow \infty$, a similar copy $C(x)$ of $K(x)$, $w_0=0$ and $w_1,w_2,w_3\in \R$ such that 
	\begin{enumerate} 
		\item \label{e:T1} $T^{m_n}(x_{k_n})\to x$, 
		\item \label{e:T2} $\lambda_{k_n}2^{-m_n}\to c$, 
		\item \label{e:T3} $C(x)\subset E\subset \bigcup_{i=0}^3 (C(x)+w_i)$.
	\end{enumerate}
\end{lemma}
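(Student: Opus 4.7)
The plan is to realize each microset $E$ as a Hausdorff limit of boundedly many rescaled copies of $K(T^{m_n}(x_n))$ at the dyadic depth $m_n$ where the zoom $\lambda_n$ becomes order-one, and then to apply compactness (in $2^\omega$ and in a compact interval of scales) to extract the limiting data $x$, $c$, and the translation vectors.

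For each $n$, pick $m_n \in \omega$ with $c_n := \lambda_n 2^{-m_n} \in (1/2, 1]$; such an $m_n$ exists since $\lambda_n \ge 1$. Writing $y_n := T^{m_n}(x_n)$, the identity $K(x_n) = \bigcup \iD_{m_n}(x_n)$ rescales to
\begin{equation*}
\lambda_n K(x_n) + u_n \;=\; \bigcup_{u \in F_{m_n}(x_n)} \bigl(c_n K(y_n) + v_{n,u}\bigr), \qquad v_{n,u} := \lambda_n u + u_n,
\end{equation*}
a disjoint union of translated pieces $P_{n,u}$ of diameter at most $c_n$. Since elements of $\iD_{m_n}(x_n)$ have pairwise non-overlapping convex hulls, the $v_{n,u}$ lie on an arithmetic progression with common difference $c_n > 1/2$; only those in $[-c_n, 1]$ give pieces meeting $[0,1]$, so at most $\lfloor 1/c_n\rfloor + 2 \le 4$ such pieces $P_{n,1}, \ldots, P_{n, r_n}$ exist, with $r_n \le 4$. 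By compactness of $[1/2, 1]$ and $2^\omega$, together with the finite range of $r_n$ and boundedness of the relevant $v_{n,j}$'s, I extract a subsequence $k_n$ along which $c_{k_n} \to c \in [1/2, 1]$, $y_{k_n} \to x$ in $(2^\omega, d)$, $r_{k_n} \equiv r$ is constant, and $v_{k_n, j} \to w_j'$ for each $j = 1, \ldots, r$. Fact~\ref{f:cont} and the continuity of scaling and translation give $P_{k_n, j} \to cK(x) + w_j'$ in Hausdorff metric, and Fact~\ref{f:subset} applied to $(\lambda_{k_n} K(x_{k_n}) + u_{k_n}) \cap [0,1] \subset \bigcup_{j} P_{k_n, j}$ yields $E \subset \bigcup_{j=1}^{r}(cK(x) + w_j')$.

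Now fix $p \in E \cap (0, 1)$ and $p_{k_n} \to p$ in $(\lambda_{k_n} K(x_{k_n}) + u_{k_n}) \cap [0, 1]$; after one more refinement, $p_{k_n}$ lies in a fixed piece $P_{k_n, j_0}$. Define $C(x) := cK(x) + w_{j_0}'$; setting $w_0 = 0$ and letting $w_1, w_2, w_3$ enumerate (with repetitions, if $r - 1 < 3$) the values $w_j' - w_{j_0}'$ for $j \ne j_0$, we obtain the outer inclusion $E \subset \bigcup_{i=0}^{3}(C(x) + w_i)$. The inner inclusion $C(x) \subset E$ reduces to showing $P_{k_n, j_0} \subset [0,1]$ for all sufficiently large $n$: then $P_{k_n, j_0} \subset (\lambda_{k_n} K(x_{k_n}) + u_{k_n}) \cap [0,1]$, and Fact~\ref{f:subset} yields $cK(x) + w_{j_0}' \subset E$. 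This containment is the main obstacle of the proof: with the rigid prescription $c_n \in (1/2, 1]$ it can fail when $p$ is close to the boundary of $[0,1]$, because a piece of diameter near $1$ may straddle an endpoint. I would address this by adapting the choice of $m_n$ to the location of $p$, taking $m_n$ minimal such that the level-$m_n$ piece containing the preimage $(p_{k_n} - u_{k_n})/\lambda_{k_n}$ rescales into $[0,1]$. Minimality and the fact that $p_{k_n} \in (\delta, 1-\delta)$ for $\delta := \min(p, 1-p)/2 > 0$ force the parent piece at level $m_n - 1$ to extend outside $[0,1]$ while still containing a point at distance $\geq \delta$ from $\{0, 1\}$, giving $c_n > \delta/2$; an upper bound on $c_n$ follows from $\max K(y_n) \to \max K(x)$. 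The remaining delicate step -- confirming that the number of pieces meeting $[0,1]$ is still bounded by four under this adaptive choice -- is a short combinatorial argument using the dyadic alignment of the lattice of left endpoints $v_{n,u}$.
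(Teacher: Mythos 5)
Your overall strategy---decompose $K(x_n)$ into level-$m_n$ dyadic pieces at a scale where $\lambda_n 2^{-m_n}$ is of order one, extract a subsequence by compactness of $2^{\omega}$ and of a compact range of scales, and sandwich $E$ between one rescaled piece and the union of the few pieces meeting $[0,1]$---is exactly the paper's. You also correctly diagnose why the naive choice $c_n\in(1/2,1]$ fails for the inner inclusion. But the adaptive fix you propose leaves two genuine gaps, and one of them is fatal as written.

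The first gap is the \emph{upper} bound on $c_n=\lambda_{k_n}2^{-m_n}$. Your $m_n$ is minimal so that the level-$m_n$ piece through the preimage of $p_{k_n}$ rescales into $[0,1]$. Minimality gives only that this piece, of diameter $c_n\diam K(T^{m_n}x_{k_n})$, fits into $[0,1]$. If $T^{m_n}(x_{k_n})\to\mathbf{0}$ (entirely possible: $E$ may be finite), then $\diam K(T^{m_n}x_{k_n})\to 0$ and nothing prevents $c_n\to\infty$; the appeal to $\max K(y_n)\to\max K(x)$ collapses precisely when $K(x)=\{0\}$. The paper solves this with a two-step choice: take $p_n$ minimal so that \emph{some} level-$p_n$ piece fits in $[0,1]$ (giving $\lambda_n2^{-p_n}\geq\eps/2$), then take the minimal $q_n\geq p_n$ with $\lambda_n2^{-q_n}\leq 2$, and observe that $x_n(i)=0$ for all $p_n\leq i<q_n$ (otherwise minimality of $q_n$ fails). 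Hence the level-$q_n$ piece is the \emph{same set} as the level-$p_n$ piece, while $\lambda_n2^{-q_n}\in[\eps/2,2]$, giving the two-sided bound that your proof lacks.

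The second gap is the count ``at most four.'' The paper's $p_n$ is minimal over \emph{all} pieces, so no level-$(p_n-1)$ piece fits in $[0,1]$; any such piece meeting $[0,1]$ must therefore have its convex hull contain $0$ or $1$, and since these hulls are pairwise non-overlapping at most two qualify, yielding at most four level-$p_n$ pieces meeting $[0,1]$. Your adaptive $m_n$ is minimal only for the single piece through $p_{k_n}$; other level-$(m_n-1)$ pieces may perfectly well fit inside $[0,1]$, and the only count you actually control is $\lfloor 1/c_n\rfloor+2$, which with $c_n\geq\delta/2$ is of order $1/\delta$, not $4$. So the ``short combinatorial argument'' you defer does not exist in the form you expect---the constant $4$ depends essentially on the global (not pointwise) minimality of $p_n$. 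For the downstream use in Theorem~\ref{t:Cx} a bound depending on $\delta$ would still suffice since only finiteness of the union is needed, but as a proof of the lemma as stated this is a real gap, and in any case the upper-bound gap above must be repaired first.
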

\begin{proof}
Define $C^i_n=2^{-i} K(T^i(x_n))$ for all $i,n\in \omega$. Let $\eps\in (0,1/2)$ be such that $E\cap (\eps,1-\eps)\neq \emptyset$. For all $n$ let $\varphi_n \colon \R  \to \R$ be defined as $\varphi_n(z)=\lambda_n z+u_n$ and let $p_n\in \omega$ be the minimal integer such that there is a $v^0_n\in F_{p_n}(x_n)$ such that 
\begin{equation} \label{e:vn} \varphi_n (C_n^{p_n}+v^0_n) \subset [0,1].
\end{equation} 
By the minimality of $p_n$, for all $n$ there are at most $4$ translations $v\in F_{p_n}(x_n)$ satisfying 
\[\varphi_n(C_n^{p_n}+v)\cap [0,1] \neq \emptyset,\] 
assume that they are $v^0_n,\dots,v_n^{\ell_n}$ for some $\ell_n\in\{0,1,2,3\}$. Then for all $n\geq 1$ we obtain 
\begin{equation} \label{e:pni}
\varphi_n(K(x_n))\cap [0,1]=\left( \bigcup_{i=0} ^{\ell_n} \varphi_n(C_n^{p_n}+v_n^i) \right)\cap [0,1].
\end{equation}
	
	First we show $\lambda_n 2^{-p_n}\geq \eps/2$ for all $n$ large enough. Indeed, if $n$ is large enough then $\varphi_n (K(x_n))\cap (\eps,1-\eps)\neq \emptyset$. Thus for $k=p_n-1$ there is a $D\in \iD_{k}(x_n)$ with $\varphi_n(D)\cap (\eps,1-\eps)\neq \emptyset$. Assume to the contrary that $\lambda_n 2^{-p_n}<\eps/2$. Then $\diam \varphi_n(D)\leq \lambda_n 2^{-k}<\eps$,  so $\varphi_n(D)\subset [0,1]$, contradicting the minimality of $p_n$.
	
Now let $q_n\geq p_n$ be the minimal integer for which $\lambda_n 2^{-q_n}\leq 2$. We prove $x_n(i)=0$ for all $i\in \{p_n,\dots,q_n-1\}$. Assume to the contrary that this is not the case, and take the minimal $i$ such that $p_n\leq i<q_n$ and $x_n(i)=1$. As $x_n(j)=0$ for all $p_n\leq j<i$, we easily obtain $C_n^i=C_n^{p_n}$. Hence $\varphi_n (C_n^{i} + v^0_n)\subset [0,1]$. Since $x_n(i)=1$, we have $\diam K(T^i(x_n))\geq 1/2$, so 
	\begin{equation*} 
	\lambda_n 2^{-i-1}\leq \lambda_n 2^{-i} \diam K(T^i(x_n))=\diam \varphi_n (C_n^{i}) = \diam \varphi_n (C_n^{i} + v^0_n) \leq 1.
	\end{equation*} 
	Hence $\lambda_n 2^{-i}\leq 2$, which contradicts the minimality of $q_n$. As $x_n(i)=0$ for all $i\in \{p_n,\dots,q_n-1\}$, we obtain 
	\begin{equation} \label{e:pnqn}
	    C_n^{q_n}=C_n^{p_n} \text{ for all } n\in \omega.
	\end{equation}
Since $\lambda_n 2^{-q_n}\in [\eps/2,2]$ for all large enough $n$, we can choose $c\in [\eps/2,2]$ and a subsequence $k_n\uparrow \infty$ such that $\ell_{k_n}=\ell\in \{0,1,2,3\}$ for all $n$ and $m_n=q_{k_n}$ satisfies $\lambda_{k_n} 2^{-m_n} \to c$, so \eqref{e:T2} holds.
	
	As $2^{\omega}$ is compact, by choosing a subsequence we may assume that there exists $x\in 2^{\omega}$ such that 
	$T^{m_n}(x_{k_n})\to x$,
	so \eqref{e:T1} holds as well. 
	
Recall that $m_n=q_{k_n}$. An application of \eqref{e:pnqn} for $k_n$, \eqref{e:vn}, and Fact \ref{f:cont} imply that we may assume by choosing a subsequence that there exists $v\in \R$ such that
	\begin{equation} \label{e:kn1} 
	\varphi_{k_n}(C^{m_n}_{k_n}+v^0_{k_n})\to C(x) \defeq cK(x)+v\subset [0,1].
	\end{equation} 
As $\varphi_{k_n}(C^{m_n}_{k_n}+v^i_{k_n})\subset [-1,2]$ is isometric to $\varphi_{k_n}(C^{m_n}_{k_n}+v^0_{k_n})$ for every $1\leq i\leq \ell$, we may suppose by choosing a subsequence that 
	\begin{equation} \label{e:kn2}  
	\varphi_{k_n}(C^{m_n}_{k_n}+v^i_{k_n})\to C(x)+w_i 
	\end{equation} 
	for all $1\leq i\leq \ell$ with some $w_1,\dots,w_{\ell}\in \R$. An application of the formulas \eqref{e:pnqn}, \eqref{e:pni}, and $\varphi_n(K(x_n))\cap [0,1]\to E$ for the subsequence $k_n$ implies 
	\begin{equation} \label{e:kn3} 
	\left( \bigcup_{i=0}^{\ell} \varphi_{k_n}(C^{m_n}_{k_n}+v^i_{k_n}) \right)\cap [0,1]\to E.
	\end{equation}
	Let $w_0=0$. Fact~\ref{f:subset} and \eqref{e:kn1}, \eqref{e:kn2}, and \eqref{e:kn3} imply that 
	\begin{equation*}
	C(x)\subset E\subset \bigcup_{i=0}^{\ell} (C(x)+w_i),
	\end{equation*}
	so \eqref{e:T3} holds no matter how we define $w_i$ for $i>\ell$. The proof is complete.
\end{proof}

\begin{theorem} \label{t:Cx}
	Let $d\geq 1$ and let $E\in \iM(\{K(x)^d: x\in 2^{\omega}\})$. Assume that $(\lambda_n K(x_n)^d+u_n)\cap [0,1]^d \to E$ for some $\lambda_n\geq 1$, $x_n\in 2^{\omega}$, and $u_n\in \R^d$. Then there exist $x\in 2^{\omega}$, $m_n\in \omega$ for all $n$, a subsequence of positive integers $k_n\uparrow \infty$, a similar copy $C(x)$ of $K(x)$, and $v_1,\dots,v_\ell\in \R^d$ such that 
	\begin{enumerate}[(i)]
		\item \label{e:d1} $T^{m_n}(x_{k_n})\to x$,
		\item \label{e:d2} $C(x)^d+v_1\subset E\subset  \bigcup_{i=1}^\ell (C(x)^d+v_i)$. 
	\end{enumerate}
\end{theorem}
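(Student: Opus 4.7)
My plan is to imitate the proof of Lemma~\ref{l:main} with the intervals replaced by their $d$-fold products throughout. Write $\varphi_n(z)=\lambda_n z+u_n$, choose $\eps\in(0,1/2)$ with $E\cap(\eps,1-\eps)^d\neq\emptyset$, and set $C_n^{i,d}=2^{-i}K(T^i(x_n))^d$. The product cell decomposition $K(x_n)^d=\bigcup_{v\in F_i(x_n)^d}(C_n^{i,d}+v)$ again has pairwise non-overlapping cubical convex hulls of side $2^{-i}$, which is all I need.

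First I would define $p_n$ as the minimal integer for which some $v_n^0\in F_{p_n}(x_n)^d$ satisfies $\varphi_n(C_n^{p_n,d}+v_n^0)\subset[0,1]^d$. The diameter bound $\diam\varphi_n(C_n^{p_n-1,d}+v)\le\sqrt d\,\lambda_n 2^{-(p_n-1)}$ applied to any cell at level $p_n-1$ whose image hits $(\eps,1-\eps)^d$ forces $\lambda_n 2^{-p_n}\ge\eps/(2\sqrt d)$ for all large $n$; then a volume-packing argument (the images of cells at level $p_n$ meeting $[0,1]^d$ are cubes of side $\lambda_n 2^{-p_n}\ge\eps/(2\sqrt d)$ with disjoint interiors inside $[-2\sqrt d,1+2\sqrt d]^d$) caps the number of relevant translates by some $L=L(d,\eps)$. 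List them as $v_n^0,\dots,v_n^{\ell_n}$ with $\ell_n<L$.

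Next, let $q_n\ge p_n$ be minimal with $\lambda_n 2^{-q_n}\le 2\sqrt d$. I would argue $x_n(i)=0$ for $p_n\le i<q_n$ by the same telescoping as in Lemma~\ref{l:main}: the smallest offender $i$ forces $C_n^{i,d}=C_n^{p_n,d}$, and $x_n(i)=1$ then makes $\diam K(T^i(x_n))\ge 1/2$, so $\sqrt d\ge\diam\varphi_n(C_n^{i,d})\ge\lambda_n 2^{-i-1}$, contradicting the minimality of $q_n$. Hence $C_n^{q_n,d}=C_n^{p_n,d}$ and $\lambda_n 2^{-q_n}\in[\eps/(2\sqrt d),2\sqrt d]$. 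Using compactness of this interval, of $2^\omega$, and pigeonholing $\ell_n$, I extract $k_n\uparrow\infty$ with $m_n=q_{k_n}$ so that $\lambda_{k_n}2^{-m_n}\to c>0$, $T^{m_n}(x_{k_n})\to x\in 2^\omega$, $\ell_{k_n}=\ell$ is constant, and each $\lambda_{k_n}v_{k_n}^j+u_{k_n}\to\tilde v_j\in\R^d$ (bounded because each image cell meets $[0,1]^d$). Fact~\ref{f:cont} then gives $\varphi_{k_n}(C_{k_n}^{m_n,d}+v_{k_n}^j)\to cK(x)^d+\tilde v_j$; writing $C(x)=cK(x)$ (a similar copy of $K(x)$), Fact~\ref{f:subset} applied to $\varphi_{k_n}(C_{k_n}^{m_n,d}+v_{k_n}^0)\subset\varphi_{k_n}(K(x_{k_n})^d)\cap[0,1]^d$ yields $C(x)^d+\tilde v_0\subset E$, while the finite covering $\varphi_{k_n}(K(x_{k_n})^d)\cap[0,1]^d\subset\bigcup_{j=0}^{\ell_n}\varphi_{k_n}(C_{k_n}^{m_n,d}+v_{k_n}^j)$ and a pigeonhole on limits deliver $E\subset\bigcup_{j=0}^{\ell}(C(x)^d+\tilde v_j)$, proving (\ref{e:d1}) and (\ref{e:d2}) after renumbering the $\tilde v_j$.

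The main obstacle compared with the $d=1$ case is bookkeeping the $d$-dependent geometric constants ($2\sqrt d$ in the threshold for $q_n$, $\eps/(2\sqrt d)$ in the lower bound for $\lambda_n 2^{-p_n}$, and a volume-based $L(d,\eps)$ in place of the constant $4$) and replacing the 1D minimality argument for the number of surviving translates with a volume estimate; the identity $cK(x)^d=(cK(x))^d$ is what packages the limit of a single scaled product cell into the product form $C(x)^d$ demanded by (\ref{e:d2}).
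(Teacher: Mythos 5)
Your argument is correct, but it follows a genuinely different route from the paper's. The paper does not redo the cell-geometry in $\R^d$: it observes that $E$ factorizes as $E_1\times\dots\times E_d$, where each $E_i$ is the limit of the one-dimensional set $(\lambda_n K(x_n)+u_n^i)\cap[0,1]$, and then applies Lemma~\ref{l:main} successively in each coordinate (along a common refinement of subsequences) to obtain $z_i\in 2^\omega$, $c_i>0$, and levels $m_{i,n}$ with $\lambda_{k_n}2^{-m_{i,n}}\to c_i$ and $C(z_i)\subset E_i\subset\bigcup_{j}(C(z_i)+w_{i,j})$. The key additional step is aligning the levels across coordinates: since each $\lambda_{k_n}2^{-m_{i,n}}$ converges, the integer differences $m_{1,n}-m_{i,n}$ are eventually constant, so with $x:=z_1$ one has $x=T^{\ell_i}(z_i)$, making each $C(z_i)$ a finite union of translates of $C(x)$; the product of the one-dimensional sandwiches then gives \eqref{e:d2} with $\ell=\prod_i 2^{\ell_i+2}$. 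Your proof instead re-derives the geometry of Lemma~\ref{l:main} directly in $d$ dimensions using the cells $2^{-i}K(T^i(x_n))^d+v$, which sidesteps the level-alignment step entirely (there is a single scale $m_n$ from the outset), at the cost of redoing the diameter and counting estimates. In particular, the paper's clean one-dimensional bound of at most $4$ surviving translates comes from a boundary count (a level-$(p_n-1)$ interval not contained in $[0,1]$ must straddle $0$ or $1$, giving at most $2$ such intervals, hence at most $4$ level-$p_n$ subcells), whereas you replace it with volume packing to get $L(d,\eps)$. One small inaccuracy there: the container should be $[-s,1+s]^d$ with $s=\lambda_n2^{-p_n}$ (which need not be $\leq 2\sqrt d$ at that stage, since $p_n$ may be strictly less than $q_n$), not the fixed $[-2\sqrt d,1+2\sqrt d]^d$; but the estimate $(1+2/s)^d\leq(1+4\sqrt d/\eps)^d$ using $s\geq\eps/(2\sqrt d)$ still yields a finite $L(d,\eps)$, so your conclusion stands.
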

\begin{proof} Let $u_n=(u_n^1,\dots, u_n^d)$ for all $n$. It easily follows that $E=E_1\times\dots \times E_d$, where $E_i\subset [0,1]$ are compact sets such that $E_i\cap (0,1)\neq \emptyset$ and 
	\begin{equation*} 
	(\lambda_n K(x_n)+u^i_n)\cap [0,1] \to E_i \text{ as } n\to \infty
	\end{equation*}
	for all $1\leq i\leq d$. Applying Lemma~\ref{l:main} for all $1\leq i\leq d$ successively implies that there exist $z_i\in 2^{\omega}$, $c_i\in \R^+$, $m_{i,n}\in \omega$ for all $n$, a subsequence of positive integers $k_{n}\uparrow \infty$ (note that this does not depend on $i$), a similar copy $C(z_i)$ of $K(z_i)$, and $w_{i,j}\in \R$ for $0\leq j\leq 3$ such that 
	\begin{enumerate} 
		\item \label{e:dd1} $T^{m_{i,n}}(x_{k_n})\to z_i$, 
		\item \label{e:dd2} $\lambda_{k_n}2^{-m_{i,n}}\to c_i$, 
		\item \label{e:dd3} $C(z_i)\subset E_i\subset  \bigcup_{j=0}^3 (C(z_i)+w_{i,j})$. 
	\end{enumerate}
	By \eqref{e:dd2} for large enough $n$ and for all $1\leq i\leq j\leq d$ we obtain that $m_{i,n}-m_{j,n}$ is independent of $n$. We may assume that $m_{1,n}-m_{i,n}=\ell_i\in \N$ for any $1\leq i\leq d$ and any large enough $n$. Let $m_n=m_{1,n}$ and $x=z_1$, then clearly \eqref{e:d1} holds. By \eqref{e:dd1} we obtain $x=T^{\ell_i}(z_i)$ for all $1\leq i\leq d$. Therefore, $C(z_i)$ is a union of at most $2^{\ell_i}$ many translates of $C(x)$ for all $1\leq i\leq d$, so taking the product of \eqref{e:dd3} for all $i\in \{1,\dots,d\}$ implies \eqref{e:d2} with $\ell= \prod_{i=1}^d 2^{\ell_i+2}$, which finishes the proof.
\end{proof}

\subsection{Useful lemmas for analytic sets and balanced sequences} The goal of this subsection is to prove Lemmas~\ref{l:varphi existence} and \ref{l:balanced}.

\begin{fact} \label{f:Gd}
Let $A \subset [0,\infty)$ be a non-empty analytic set. Then there exist a $G_\delta$ set $G \subset 2^\omega$ and a continuous map $f \colon G \to [0,\infty)$ such that $f(G) = A$.
\end{fact}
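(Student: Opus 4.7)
The plan is to combine two classical facts from descriptive set theory: every non-empty Polish space is a continuous image of the Baire space $\omega^\omega$, and $\omega^\omega$ embeds as a $G_\delta$ subset of $2^\omega$ via the standard run-length encoding.

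By the definition of analyticity recalled in the Preliminaries, I would first fix a Polish space $X$ and a continuous surjection $h \colon X \to A$. Then, invoking the classical fact that every non-empty Polish space admits a continuous surjection from $\omega^\omega$ (see, e.g., \cite{Ke}), I would pick a continuous surjection $\psi \colon \omega^\omega \to X$, so that $h \circ \psi \colon \omega^\omega \to A$ is a continuous surjection. Next I would identify $\omega^\omega$ with the set
\[
G = \{x \in 2^\omega : x(k) = 1 \text{ for infinitely many } k \in \omega\}
\]
via the homeomorphism $\phi \colon \omega^\omega \to G$ sending $(n_0, n_1, n_2, \ldots)$ to the concatenation $0^{n_0}\, 1\, 0^{n_1}\, 1\, 0^{n_2}\, 1 \cdots \in 2^\omega$, where $0^k$ denotes the string of $k$ zeros; its inverse reads off the lengths of successive blocks of zeros delimited by ones, and continuity in both directions follows directly from the definition of the metric on $2^\omega$ given in the Preliminaries. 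Setting $f = h \circ \psi \circ \phi^{-1} \colon G \to [0,\infty)$ then yields a continuous map with $f(G) = A$.

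It remains to verify that $G$ is indeed $G_\delta$ in $2^\omega$. Writing $U_n = \{x \in 2^\omega : x(k) = 1 \text{ for some } k \geq n\}$, each $U_n$ is open because its complement is the finite union of basic clopen cylinders determined by the first $n$ coordinates, and clearly $G = \bigcap_{n \in \omega} U_n$. No real obstacle arises; the only non-trivial external ingredient is the surjection $\omega^\omega \twoheadrightarrow X$ from an arbitrary non-empty Polish space $X$, which is a foundational result of descriptive set theory and can simply be quoted.
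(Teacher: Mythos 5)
Your proof is correct and takes a genuinely different route from the paper's. The paper works through the reals: it fixes a continuous surjection $g \colon 2^\omega \to [0,1]$, a homeomorphism $\psi \colon [0,1) \to [0,\infty)$, pulls $A$ back to an analytic subset $g^{-1}(\psi^{-1}(A))$ of $2^\omega$, and then applies the projection characterization of analytic sets (Kechris, Exercise~14.3) to exhibit that set as $\pi_1(H)$ for a $G_\delta$ set $H \subset 2^\omega \times 2^\omega$; finally it transports $H$ back to $2^\omega$ via a homeomorphism $2^\omega \cong 2^\omega \times 2^\omega$. You instead start directly from the definition of analyticity given in the Preliminaries (continuous image of a Polish space), compose with the classical surjection $\omega^\omega \twoheadrightarrow X$ for any non-empty Polish $X$, and then use the run-length embedding of $\omega^\omega$ as a $G_\delta$ subset of $2^\omega$. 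Your approach is arguably more economical: it avoids the detour through $[0,1]$ and the product-space projection theorem, replacing both by the single surjection $\omega^\omega \twoheadrightarrow X$, and it keeps the map $f$ transparently as a composition of explicit continuous maps.

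One small inaccuracy worth fixing: the complement of $U_n = \{x \in 2^\omega : x(k)=1 \text{ for some } k \ge n\}$ is the set $\{x : x(k)=0 \text{ for all } k \ge n\}$, which consists of exactly $2^n$ individual points of $2^\omega$ (determined by the first $n$ coordinates, with all later coordinates zero), not a finite union of basic clopen cylinders. It is still closed, of course, being finite; or, more directly, $U_n = \bigcup_{k \ge n}\{x : x(k)=1\}$ is a union of clopen sets, hence open. Either repair gives the same $G_\delta$ conclusion, so this does not affect the validity of the argument.
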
 
\begin{proof} 
Let $g \colon 2^\omega \to [0,1]$ be a continuous surjection, and let $\psi \colon [0, 1) \to [0, \infty)$ be a homeomorphism. Since $g^{-1}(\psi^{-1}(A)) \subset 2^\omega$ is analytic, by \cite[Exercise~14.3]{Ke} we can find a $G_\delta$ set $H \subset 2^\omega \times 2^\omega$ such that $\pi_1(H) = g^{-1}(\psi^{-1}(A))$, where $\pi_1$ denotes the projection onto the first coordinate. Let $h\colon 2^{\omega } \to 2^\omega \times 2^\omega$ be a homeomorphism. As $g(g^{-1}(\psi^{-1}(A))) = \psi^{-1}(A)$, taking $G=h^{-1}(H)$ and $f=\psi \circ g \circ \pi_1\circ h|_{G}$ finishes the proof. 
\end{proof} 

\begin{definition} 
For $s\in 2^{<\omega}$ we denote by $\length(s)$ the number of coordinates of $s$, where $\length(\emptyset)=0$. Define 
\begin{equation*} 
[s]=\{x\in 2^{\omega}: x\restriction \length(s)=s\}.
\end{equation*} 
\end{definition} 

\begin{lemma}
\label{l:varphi existence}
Let $A \subset [0,\infty)$ be a non-empty analytic set. Then there exists a map $\varphi \colon 2^{<\omega} \to [0, \infty)$ such that
\begin{equation*} 
\overline{\varphi}(x) = \lim_{n \to \infty} \varphi(x \restriction n)
\end{equation*} 
exists for each $x \in 2^\omega$, and the resulting function $\overline{\varphi}$ satisfies $\overline{\varphi}(2^\omega) = A$. 
\end{lemma}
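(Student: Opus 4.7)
The plan is to start from the $G_\delta$ representation of $A$ provided by Fact~\ref{f:Gd}: write $A = f(G)$ with $G \subseteq 2^\omega$ a $G_\delta$ set and $f\colon G \to [0,\infty)$ continuous, and fix some $a_0 \in A$. I will build $\varphi$ so that $\overline{\varphi}(x) = f(x)$ for every $x \in G$ (which automatically forces $A \subseteq \overline{\varphi}(2^\omega)$), while $\varphi(x\restriction n)$ is eventually constant with value in $A$ for every $x \notin G$ (so that $\overline{\varphi}(2^\omega)\subseteq A$).

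Concretely, I would write $G = \bigcap_{n\geq 0} U_n$ with open sets $U_n\supseteq U_{n+1}$ and refine each $U_n$ into an antichain partition $U_n = \bigsqcup_{t\in T_n}[t]$ by cylinders of length at least $n$, with $T_{n+1}$ refining $T_n$, and such that every $t\in T_n$ satisfies either $[t]\cap G = \emptyset$ or $\diam f([t]\cap G) < 2^{-n}$; this is possible by the continuity of $f$ combined with the fact that every open subset of $2^\omega$ is a disjoint union of cylinders. For each $t\in T_n$ with $[t]\cap G\neq\emptyset$ I would fix a sample point $y_t\in [t]\cap G$ and record $c_t = f(y_t)\in A$. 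Then I set $\varphi(s) = c_{t(s)}$ whenever some prefix of $s$ lies in some $T_n$ with $[t]\cap G\neq\emptyset$ (taking $n(s)$ maximal and $t(s)\in T_{n(s)}$ the corresponding prefix, unique by the antichain property), and $\varphi(s) = a_0$ otherwise.

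For $x\in G$, the unique cylinder $t_n(x)\in T_n$ containing $x$ is a prefix of $x$ of length $\geq n$, so $n(x\restriction k)\to\infty$; since $x$ and $y_{t(x\restriction k)}$ both lie in $[t(x\restriction k)]\cap G$, whose $f$-diameter is less than $2^{-n(x\restriction k)}$, we conclude $\varphi(x\restriction k)\to f(x)$. For $x\notin G$, pick $n_0$ with $x\notin U_{n_0}$; then no prefix of $x$ belongs to any $T_n$ with $n\geq n_0$, so the non-decreasing integer-valued sequence $n(x\restriction k)$ is bounded, and by the antichain property $t(x\restriction k)$ itself stabilises, whence $\varphi(x\restriction k)$ is eventually constant with value in $A$. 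Combining both cases, $\overline{\varphi}(2^\omega) = A$.

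I expect the main technical difficulty to lie in the refinement step, where the nested partitions, the length condition, and the $2^{-n}$-diameter bound on $f([t]\cap G)$ must be arranged simultaneously. The saving observation is that cylinders disjoint from $G$ automatically satisfy the diameter condition (with diameter $0$), so the parts of $2^\omega$ far from $G$ require no extra work, and points of $\overline{G}\setminus G$ are handled by the stabilisation argument together with the default value $a_0$.
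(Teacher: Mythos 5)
Your approach differs genuinely from the paper's. The paper decomposes the \emph{complement} $F = 2^\omega\setminus G$ as an increasing union of closed sets $F_n$, and defines $\varphi$ by induction on $\length(s)$: when $[s]$ meets both $G$ and $F$, it tracks the index $m(s)=\min\{m:[s]\cap F_m\neq\emptyset\}$ and picks a fresh value from $f([s]\cap G)$ only when this index jumps, otherwise copying $\varphi$ from the parent. Whether $x\in G$ or $x\in F$ is then read off from the behaviour of $m(x\restriction n)$. You instead decompose $G$ itself as $\bigcap_n U_n$ and try to control the oscillation of $f$ directly via refining antichains of cylinders of small $f$-diameter, recovering the dichotomy $x\in G$ vs.\ $x\notin G$ from whether $n(x\restriction k)\to\infty$ or stabilises. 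Conceptually this is a cleaner picture, and your stabilisation argument for $x\notin G$ and convergence argument for $x\in G$ are both correct as written.

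There is, however, a gap in the refinement step. The claim that $U_n$ can be \emph{partitioned} into cylinders $t$, each either disjoint from $G$ or with $\diam f([t]\cap G)<2^{-n}$, does not follow from the continuity of $f$ on $G$. Continuity only controls the oscillation of $f$ in small cylinders around points \emph{of} $G$; it says nothing about points $y\in U_n\cap(\overline G\setminus G)$. For such $y$ every prefix $r$ of $y$ gives a cylinder meeting $G$ (because $y\in\overline G$), yet the infimum of $\diam f([r]\cap G)$ over these prefixes may be bounded away from $0$. Concretely, take $G=2^\omega\setminus\{\mathbf 0\}$ (where $\mathbf 0$ is the zero sequence) and let $f\colon G\to\{0,1\}$ send $x$ to $0$ or $1$ according to the parity of the position of the first $1$ in $x$; then $f$ is continuous (locally constant) on $G$, but $\diam f([\mathbf 0\restriction k]\cap G)=1$ for every $k$, so if $\mathbf 0\in U_n$ then no cylinder in a partition of $U_n$ containing $\mathbf 0$ can satisfy your condition once $2^{-n}\le 1$. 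Your ``saving observation'' handles cylinders disjoint from $G$, but not cylinders around boundary points that necessarily meet $G$.

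The fix is small but it must be made: replace ``partition $U_n$'' by ``cover $G$ by an antichain $T_n$ of cylinders of length $\ge n$, each contained in $U_n$, each with $\diam f([t]\cap G)<2^{-n}$, and each extending a cylinder of $T_{n-1}$''. This is easy from continuity of $f$ at points of $G$: for $x\in G$ choose a suitable prefix $r_x$ of $x$ extending the $T_{n-1}$-cylinder through $x$, and take $T_n$ to be the antichain of minimal cylinders arising this way. The union $V_n=\bigcup_{t\in T_n}[t]$ is then an open set with $G\subset V_n\subset U_n$ (and $\bigcap_n V_n=G$ still holds); crucially, you no longer need $T_n$ to cover the troublesome boundary points at all, since your definition already falls back to $a_0$ or to the last stable $c_t$ when no prefix lies in $T_n$. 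With this correction, your argument goes through and gives a sound alternative to the paper's proof.
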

\begin{proof}
According to Fact~\ref{f:Gd} we can choose a $G_\delta$ set $G \subset 2^\omega$ and a continuous map $f \colon G \to [0, \infty)$ such that $f(G) = A$. The set $F = 2^\omega \setminus G$ is $F_{\sigma}$, so it can be written as $F = \bigcup_{n=1}^{\infty} F_n$ where $F_n\subset 2^{\omega}$ are closed and $F_n \subset F_{n+1}$ for all $n\geq 1$. We define $\varphi$ on an element $s \in 2^{<\omega}$ by induction on the length of $s$. 
Fix $a_0\in A$ arbitrarily and define $\varphi(\emptyset) = a_0$. Now suppose that $\varphi$ is already defined on $s \in 2^{<\omega}$, our task is to define it on $s^\frown c$ where $c \in \{0, 1\}$. 
If $[s^\frown c] \cap F = \emptyset$ then let $\varphi(s^\frown c)$ be an arbitrary element of $f([s^\frown c])$. If $[s^\frown c] \cap G = \emptyset$ then let $\varphi(s^\frown c) = \varphi(s)$. 
	
It remains to define $\varphi(s^\frown  c)$ if $[s^\frown  c]$ intersects both $G$ and $F$. Let $m(s^\frown  c)$ and $m(s)$ be the smallest indices with $[s^\frown  c] \cap F_{m(s^\frown  c)} \neq \emptyset$ and $[s] \cap F_{m(s)} \neq \emptyset$, respectively. If $m(s^\frown  c) = m(s)$ then let $\varphi(s^\frown  c) = \varphi(s)$. Otherwise, let $\varphi(s^\frown  c)$ be an arbitrary element of $f([s^\frown  c] \cap G)$, concluding the definition of $\varphi$. 
	
It remains to check that $\varphi$ satisfies the conditions of the lemma. First note that 
\begin{equation} \label{e:varphi s in A}
\varphi(s) \in A \text{ for each } s \in 2^{<\omega},
\end{equation}
a fact that can be quickly checked by induction. Let $x \in 2^\omega$ be fixed. It is enough to show that $\overline{\varphi} (x) = \lim_{n \to \infty} \varphi(x \restriction n)$ exists, $\overline{\varphi}(x) \in A$, and if $x \in G$ then $\overline{\varphi}(x) = f(x)$. 
	
First assume that $[x \restriction m] \cap F = \emptyset$ for some $m$. Then $\varphi(x \restriction n)$ is an element of $f([x \restriction n])$ for each $n \ge m$. Hence, using the continuity of $f$ and the fact that $x \in G$, we obtain $\varphi(x\restriction n) \to f(x) \in A$. 
	
Now suppose that $[x \restriction m] \cap G = \emptyset$ for some $m$. The definition of $\varphi$ and \eqref{e:varphi s in A} imply that there exists $a \in A$ such that $\varphi(x \restriction n) = a$ for all $n\geq m$. It follows that $\varphi(x \restriction n) \to a \in A$. Note that in this case $x \in F$, so we do not have to check that $\overline{\varphi}(x) = f(x)$.
	
Finally, assume that $[x \restriction n]$ intersects both $G$ and $F$ for each $n$. Clearly, $m(x \restriction n)$ increases as $n \to \infty$. Suppose first that $m(x \restriction n) \to \infty$. Then $x \not \in F_n$ for each $n$, hence $x \in G$. Also, for infinitely many $n$, the value of $\varphi(x \restriction n)$ is chosen from $f([x \restriction n])$, and when it is not, then $\varphi(x \restriction n) = \varphi(x \restriction (n - 1))$. It follows that $\varphi(x \restriction n) \to f(x) \in A$. Finally, assume that $m(x \restriction n) $ converges, that is, there exists $m$ such that $m(x \restriction n)= m$ for all large enough $n$. Then $[x \restriction n] \cap F_m \neq \emptyset$ for each $n$, hence $x \in F_m \subset F$, so we do not need to check $\overline{\varphi}(x) = f(x)$. Using \eqref{e:varphi s in A} it also follows that there exists $a \in A$ such that $\varphi(x \restriction n) = a$ if $n$ is large enough. The proof is complete.
\end{proof}

\begin{definition}
 We call $s\in 2^{<\omega}$ a \emph{segment} of $x\in 2^{\omega}$ or $t\in 2^{<\omega}$ if \begin{equation*} 
 s = (x(k), x(k+1), \dots, x(k+n-1)) \text{ or } s = (t(k), t(k+1), \dots, t(k+n-1))
 \end{equation*} 
for some $k, n\in \omega$ with $\length(t) \ge k+n$. Such a relation is denoted by $s \sqsubseteq x$ and $s \sqsubseteq t$, respectively. Let $I\subset \omega$ be a \emph{discrete interval} if $I=\{k,\dots,k+n-1\}$ for some $k,n\in \omega$, then define $x\restriction I=(x(k),\dots,x(k+n-1))$. For two discrete intervals $I,J\subset \omega$ we write $I<J$ if $\max I<\min J$. For $s\in 2^{<\omega}\setminus \{\emptyset\}$ let 
\begin{equation*} \sigma(s)=\sum_{i=0}^{\length(s)-1} s(i) \quad \text{and} \quad  \varrho(s)=\frac{\sigma(s)}{\length(s)}.
\end{equation*}
The sequence $x\in 2^{\omega}$ is called \emph{balanced} if for all $n$ for any two segments $s,t \sqsubseteq x$ of length $n$ we have $|\sigma (s) - \sigma (t)|\le 1$. It is known that for each $a \in [0, 1]$ there exists a balanced sequence $x\in 2^{\omega}$ with $\varrho(x) =a$, see e.g.~\cite[Section~2.2]{balanced} for the details.
\end{definition}

\begin{lemma}
\label{l:balanced}
Let $0\leq a\leq b \leq 1$ and assume that $\alpha, \beta \in 2^\omega$ are balanced with $\varrho(\alpha) = a$ and $\varrho(\beta) = b$. For each $n$ let $x_n\in 2^{\omega}$ be defined as 
\begin{equation*} x_n = {s^0_n}^\frown {s^1_n} ^\frown s^2_n \dots, 
\end{equation*}  
where $s^k_n \in 2^{<\omega}$ is either a segment of $\alpha$ or a segment of $\beta$ for all $k\in \omega$. Suppose that $\length(s^{1}_{n}) \to \infty$ and $x_n\to x$ as $n \to \infty$. Then $\varrho(x)\in \{\alpha,\beta\}$. 
\end{lemma}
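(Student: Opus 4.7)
The plan is to show that $\varrho(x)$ exists and belongs to $\{a,b\}$ via a double subsequence extraction followed by two uniform estimates coming from the balanced property. (I read the conclusion as $\varrho(x)\in\{a,b\}$, since $\alpha,\beta$ are sequences, not real numbers.)

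First I would record the key consequence of balancedness that drives the argument: if $\gamma\in 2^\omega$ is balanced with density $c\in[0,1]$, then every contiguous subsequence $t\in\gamma$ of length $N$ satisfies $\sigma(t)=Nc+o(N)$ as $N\to\infty$, with the $o(N)$ term uniform in the starting position of $t$. Indeed, $\sigma(\gamma\restriction N)/N\to c$ gives $\sigma(\gamma\restriction N)=Nc+o(N)$, and by balancedness every other length-$N$ subsequence differs in sum by at most $1$.

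Next I would reduce to a controlled setting by extraction. Since a sequence of nonnegative integers either has a subsequence tending to $\infty$ or a subsequence eventually constant, pass to a subsequence of $n$ so that $L^0_n:=\length(s^0_n)$ either tends to $\infty$ or equals a fixed $L\in\omega$ for all large $n$. By a further extraction, arrange that in the first case every $s^0_n$ is a subsequence of the same one of $\alpha,\beta$, and in the second case every $s^1_n$ is; denote this chosen sequence by $\gamma$, with density $c\in\{a,b\}$. Now in case $L^0_n\to\infty$: fix any $N$; for $n$ large we have $x\restriction N=x_n\restriction N$ (from $x_n\to x$) and $L^0_n\geq N$, so $x\restriction N$ is a contiguous subsequence of $\gamma$ of length $N$. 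The key observation gives $\sigma(x\restriction N)=Nc+o(N)$ and hence $\varrho(x)=c$. In case $L^0_n=L$ eventually: $x_n\to x$ forces $s^0_n=x\restriction L$ for large $n$; for each fixed $N>L$, eventually $L^1_n\geq N-L$ (using $\length(s^1_n)\to\infty$), so $(x(L),\dots,x(N-1))$ equals $s^1_n\restriction(N-L)$, a contiguous subsequence of $\gamma$ of length $N-L$. The key observation yields $\sigma(x\restriction N)=\sigma(x\restriction L)+(N-L)c+o(N-L)$, hence again $\varrho(x)=c\in\{a,b\}$.

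The main delicate point will be the bookkeeping of the two subsequence extractions: different extractions could a priori assign different values of $c\in\{a,b\}$ to $\varrho(x)$, but since $\varrho(x)$ is determined by $x$ alone, the argument merely witnesses that the limiting density exists and lies in the two-element set $\{a,b\}$, without any contradiction. The uniformity of the $o(N)$ estimate derived from the balanced property is what allows a single $\gamma$-label on the \emph{second} block to dictate the density of the \emph{whole} limit $x$ in the bounded-$L^0_n$ case.
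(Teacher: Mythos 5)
Your proof is correct, and it takes a genuinely different route from the paper's. The paper's key step is to establish the dichotomy that there is an $N$ such that \emph{every} contiguous subword $x\restriction I$ with $I\subset[N,\infty)$ is a subword of a single $\gamma\in\{\alpha,\beta\}$, and it does so by contradiction: if not, one finds four disjoint intervals $I_1<I_2<I_3<I_4$ with $x\restriction I_1\notin\alpha$, $x\restriction I_2\notin\beta$, $x\restriction I_3\notin\alpha$, $x\restriction I_4\notin\beta$, then picks a single $n$ large enough that $x_n$ agrees with $x$ past $I_4$ and $\length(s^1_n)$ is large, and observes that the cut point $\length(s^0_n)$ cannot split both pairs $(I_1,I_2)$ and $(I_3,I_4)$, so one block carries two intervals whose restrictions both fail to lie in that block's source sequence. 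You instead bypass the dichotomy by extracting along $n$: either $\length(s^0_n)\to\infty$ and then long prefixes of $x$ sit inside a single $\gamma$-block $s^0_n$, or $\length(s^0_n)$ is eventually constant and then (using $\length(s^1_n)\to\infty$) the tail of $x$ past that constant offset sits inside a $\gamma$-block $s^1_n$. Both proofs then use the same balancedness estimate, $|\sigma(t)-mc|\le 1$ for subwords $t$ of length $m$, to turn ``all prefixes of (a tail of) $x$ are subwords of $\gamma$'' into $\varrho(x)=\varrho(\gamma)$. Your approach is a bit more direct (no contradiction, no four-interval bookkeeping) at the cost of two subsequence extractions and a case split; the paper's dichotomy is a cleanly isolated claim but requires the slightly delicate interval-placement argument. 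The details you gave are sound: the pigeonhole extraction of a common label, the identification $s^0_n=x\restriction L$ in the bounded case, and the uniformity of the balancedness bound that makes the argument close.
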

\begin{proof}
Since $\alpha$ and $\beta$ are balanced, it is easy to see that for any segments $s \sqsubseteq \alpha$ and $t \sqsubseteq \beta$ of length $n\geq 1$ we have 
\begin{equation}
\label{e:abdens}
\left| \varrho(\alpha)-  \varrho(s) \right|\leq \frac 1n  \quad \text{and} \quad	\left| \varrho(\beta)- \varrho(t)\right|\leq \frac 1n.
\end{equation}
For $N\in \omega$ let $\mathcal{I}(N)$ denote the set of discrete intervals $I$ satisfying $I\subset [N,\infty)$.  
We claim that it is enough to prove that there exists an $N\in \omega$ such that 
\begin{equation}
\label{e:allN}
\text{ either $x \restriction I \sqsubseteq \alpha$ for each $I \in \mathcal{I}(N)$, or $x \restriction I \sqsubseteq \beta$ for each $I  \in \mathcal{I}(N)$}.
\end{equation}
Suppose that \eqref{e:allN} holds, we may assume by symmetry that $x \restriction I \sqsubseteq \alpha$ for each $I \in \mathcal{I}(N)$. We want to show that $\varrho(x) =\varrho(\alpha)$. Let $y\in 2^{\omega}$ be defined by $y(n)=x(n+N)$ for each $n\in \omega$. As $\varrho(y)=\varrho(x)$, it is enough to prove that $\varrho(y)=\varrho(\alpha)$. By \eqref{e:allN} for any $n\geq 1$ we have $y\restriction n \sqsubseteq \alpha$, so \eqref{e:abdens} implies that 
\begin{equation*} 
\left| \varrho(\alpha)- \varrho(y\restriction n)\right|\leq \frac 1n,
\end{equation*} 
thus $\varrho(y)=\varrho(\alpha)$ follows. 
	
Therefore it remains to show \eqref{e:allN}. Assume to the contrary that \eqref{e:allN} fails for all $N$. Then one can find non-empty discrete intervals $I_1, I_2, I_3, I_4\subset \omega$ with $I_1 < I_2 < I_3 < I_4$ such that $x \restriction I_1 \not \sqsubseteq \alpha$, $x \restriction I_2 \not \sqsubseteq \beta$, $x \restriction I_3 \not\sqsubseteq \alpha$, and $x \restriction I_4 \not\sqsubseteq\beta$. Fix $k\in \omega$ large enough so that $I_1 \cup I_2 \cup I_3 \cup I_4 \subset [0, k)$ and then fix $n\in \omega$ large enough such that $x_n\restriction k = x \restriction k$ and $\length(s_n^1)>k$. Then clearly $x_n \restriction I_1 \sqsubseteq s_n^0$ and $x_n \restriction I_2 \sqsubseteq s_n^0$, or $x_n\restriction I_3 \sqsubseteq s_n^1$ and $x_n\restriction I_4 \sqsubseteq s_n^1$. Using that both $s_n^0$ and $s_n^1$ are segments of $\alpha$ or $\beta$, and the fact that $x$ and $x_n$ coincide on these intervals, we obtain a contradiction.
\end{proof}
	
\subsection{The proof of the Main Theorem} Finally, in this subsection we are ready to prove Theorem~\ref{t:characterization}. We need the following technical lemma, which is implicitly contained in \cite{microsets}.

\begin{lemma}
\label{l:building K}
Let $K_n \subset [0, 1]^d~(n\geq 1)$ be compact sets with $\dim_H E=\overline{\dim}_B \, E$ for all $E\in \iM(\{K_n\}_{n\geq 1})$ and let $\gamma=\sup\{ \dim_H K_n: n\geq 1\}$. Then there exists a compact set $K\subset [0,1]^d$ such that 
\begin{equation*}
\dim_H E=\overline{\dim}_B \, E \text{ for all } E\in \iM_K    
\end{equation*}
and 
\begin{equation*} \{\dim_H E : E \in \iM_K\} =\left\{\dim_H E : E \in  \iM(\{K_n\}_{n\geq 1})\right\} \cup \{\gamma\}.
\end{equation*} 
\end{lemma}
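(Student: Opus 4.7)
I will construct $K$ by placing mutually disjoint, affinely rescaled copies of the $K_n$'s inside $[0,1]^d$ so that $K$ itself has Hausdorff and upper box dimension $\gamma$, while the microsets of $K$ are controlled to lie in the prescribed set of dimensions. Fix a large integer $N$ (depending on $d$ and $\gamma$), choose pairwise disjoint closed cubes $Q_n\subset[0,1]^d$ of side $\ell_n=2^{-Nn}$ with centers $c_n$ converging to a point $w_\infty\in(0,1)^d$, and let $K_n'=\ell_nK_n+b_n\subset Q_n$ be the affine copy of $K_n$ inside $Q_n$. Set
\[
K=\{w_\infty\}\cup\bigcup_{n\ge 1}K_n'\subset[0,1]^d.
\]
Countable stability of Hausdorff dimension gives $\dim_HK=\sup_n\dim_HK_n=\gamma$, and the hypothesis applied to $K_n$ itself (which, after an arbitrarily small translation into the interior, is a microset of itself) yields $\overline{\dim}_BK_n\le\gamma$; a covering count exploiting the rapid decay of $\ell_n$ then gives $\overline{\dim}_BK\le\gamma$, so $\dim_HK=\overline{\dim}_BK=\gamma$. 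Since $w_\infty\in(0,1)^d$, the identity homothety shows $K\in\iM_K$, witnessing $\gamma\in\{\dim_HE:E\in\iM_K\}$.

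\textbf{Inclusion $\iM(\{K_n\})\subset\iM_K$.} Given $F=\lim_j(\lambda_jK_{n_j}+u_j)\cap[0,1]^d\in\iM(\{K_n\})$, I will realize $F$ as a microset of $K$ through homotheties $S_j(z)=\lambda_j\ell_{n_j}^{-1}(z-b_{n_j})+u_j$. These have scale $\lambda_j\ell_{n_j}^{-1}\ge 1$ and satisfy $S_j(K_{n_j}')=\lambda_jK_{n_j}+u_j$ exactly. Because the spacings between the cubes $Q_n$ are much larger than $\ell_{n_j}$ (which is precisely why $N$ is taken large), the $S_j$-images of $w_\infty$ and of every other copy $K_m'$ with $m\ne n_j$ are pushed entirely outside $[0,1]^d$ for $j$ large enough. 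Hence $S_j(K)\cap[0,1]^d=(\lambda_jK_{n_j}+u_j)\cap[0,1]^d\to F$, giving $F\in\iM_K$.

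\textbf{Reverse inclusion.} Let $E\in\iM_K$ with $(\lambda_jK+u_j)\cap[0,1]^d\to E$ and pass to a subsequence to classify the behavior. If $\lambda_j$ is bounded, then $E$ is a scaled closed subset of $K$, hence a countable union of rescaled $K_n'$'s plus one point; the same covering count as for $K$ yields $\dim_HE=\overline{\dim}_BE=\max\{\dim_HK_n:K_n'\text{ contributes to }E\}$, which lies in $\{\dim_HF:F\in\iM(\{K_n\})\}\cup\{\gamma\}$. If $\lambda_j\to\infty$ and a subsequence admits indices $n_j$ with $\lambda_j\ell_{n_j}\in[c_0,C]$ for fixed $0<c_0\le C$, then a single copy $K_{n_j}'$ dominates the zoom and $E$ coincides (up to Hausdorff-negligible traces from the other rapidly shrinking or rapidly disappearing copies) with a microset in $\iM(\{K_n\})$, so both $\dim_HE=\overline{\dim}_BE$ holds and the dimension lies in the target set.

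\textbf{Main obstacle.} The delicate case is the remaining \emph{skeleton regime}, where $\lambda_j\to\infty$ but $\lambda_j\ell_n\to 0$ for every fixed $n$: every $K_n'$ collapses to a point and $E$ reduces to a rescaled accumulation pattern of the centers $c_n$ around $w_\infty$, having $\dim_HE=\overline{\dim}_BE=0$. To prevent this from introducing a spurious dimension, I plan to place the centers $c_n$ along a coordinate direction adjacent to $\partial[0,1]^d$ so that any would-be skeleton microset lies entirely in a coordinate face and therefore fails the $(0,1)^d$-intersection requirement built into the definition of a microset; the small-dimensional cases require an additional device, namely absorbing $w_\infty$ into a carefully chosen $K_m'$ (selecting $K_m$ by means of Fact~\ref{f:dimK} to obtain a point whose every neighborhood realizes $\dim_HK_m$) so that the skeleton microset is dominated by a microset of $K_m$ already lying in $\iM(\{K_n\})$.
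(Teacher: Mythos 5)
Your route is genuinely different from the paper's, which simply runs the Fraser--Howroyd--K\"aenm\"aki--Yu construction from \cite{microsets} on an enumeration of $\{K_n\}$ in which each set is repeated infinitely often. However, the ``skeleton regime'' you flag is a genuine obstruction, and your patch does not remove it. You first require $w_\infty\in(0,1)^d$ so that the identity homothety certifies $\gamma\in\{\dim_HE:E\in\iM_K\}$, and you then propose to place the accumulating cubes adjacent to a coordinate face so that skeleton microsets fail the $(0,1)^d$-requirement; these two wishes already conflict. Even dropping the first, the fix fails: in the skeleton regime the cubes collapse to the points $\lambda_jc_n+u_j$, whose $i$-th coordinates converge to $\lim_j(u_j)_i$, and the translations $u_j$ are completely free, so the limiting discrete configuration can sit at any interior height. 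Hence one does obtain a legitimate microset $E$ of $K$ with $E\cap(0,1)^d\neq\emptyset$ and $\dim_HE=0$. Taking every $K_n$ to be the middle-thirds Cantor set, every $F\in\iM(\{K_n\})$ has $\dim_HF=\log2/\log3$, so $0$ is not in the target set and your $K$ violates the lemma.

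Your last device of absorbing $w_\infty$ into a carefully chosen $K_m'$ is the right instinct, but it replaces the construction wholesale and you give no argument that it removes \emph{all} skeleton microsets rather than merely providing a lower bound for one of them. You also do not address the bounded-$\lambda$ case rigorously: there $E=(\lambda K+u)\cap[0,1]^d$ is a countable union of cut affine copies of the $K_n'$, so $\dim_HE$ could be a supremum attained by no single piece (and hence possibly outside the target set), and the failure of countable stability for $\overline{\dim}_B$ makes the required equality $\dim_HE=\overline{\dim}_BE$ a nontrivial estimate that must be checked against the decay of the $\ell_n$ and the (unknown a priori) implied constants in the box-counting bounds for the $K_n$. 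The paper's trick of repeating each $K_n$ infinitely often inside the FHKY block scheme avoids all of this structurally: there is no isolated accumulation point, and every zoom eventually lands inside an affine block copy of some $K_n$, so that each microset of $K$ is essentially a microset of a single $K_n$ by design.
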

\begin{proof}
Apply the proof of \cite[Theorem~1.3]{microsets} to the multiset $\Omega^0=\{Q_i\}_{i\geq 1}$, where $\Omega^0$ is an enumeration of $\{K_n\}_{n\geq 1}$ such that each set $K_n$ is repeated infinitely often. 
\end{proof}
	
\begin{theorem}[Main Theorem]
\label{t:characterization}
Let $\dim$ be one of $\dim_H$, $\underline{\dim}_B$, or $\overline{\dim}_B$.
Let $d\geq 1$ and let $A \subset [0, d]$ be a non-empty set. Then the following are equivalent:
\begin{enumerate}
\item \label{i2} There exists a compact set $K \subset \R^d$ such that $\{\dim E : E \in \iM_K\} = A$;
\item \label{i3} $A$ is an analytic set which contains its infimum and supremum;
\item \label{i1} There exists a compact set $K \subset \R^d$ such that $\dim_H E=\overline{\dim}_B \, E$ for all $E\in \iM_K$ and $\{\dim_H E : E \in \iM_K\} = A$.
\end{enumerate} 
\end{theorem}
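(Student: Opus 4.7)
The implication (3) $\Rightarrow$ (1) is immediate: once $\dim_H E = \overline{\dim}_B E$ for all $E \in \iM_K$, Fact~\ref{f:ineq} forces $\dim_H$, $\underline{\dim}_B$, and $\overline{\dim}_B$ to coincide on $\iM_K$, so the set of dimension values is $A$ for any of the three choices. For (1) $\Rightarrow$ (2), Theorem~\ref{t:Fu} guarantees that the infimum and supremum belong to $\{\dim E : E \in \iM_K\}$. To see analyticity, one writes $\iM_K$ as the projection of the Borel set of triples $(E, \{\lambda_n\}_n, \{u_n\}_n) \in \iK([0,1]^d) \times [1,\infty)^\omega \times (\R^d)^\omega$ satisfying $(\lambda_n K + u_n) \cap [0,1]^d \to E$ in the Hausdorff metric, intersected with the open set $\{E : E \cap (0,1)^d \neq \emptyset\}$; since $\dim_H$, $\underline{\dim}_B$, and $\overline{\dim}_B$ are Borel-measurable on $\iK(\R^d)$, the image $\{\dim E : E \in \iM_K\}$ is analytic.

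The substance is (2) $\Rightarrow$ (3). Set $A' = A/d \subset [0,1]$ and apply Lemma~\ref{l:varphi existence} to obtain $\varphi \colon 2^{<\omega} \to [0,1]$ with $\overline{\varphi}(x) := \lim_n \varphi(x \restriction n)$ existing for every $x \in 2^\omega$ and $\overline{\varphi}(2^\omega) = A'$. For each $s \in 2^{<\omega}$ fix a balanced sequence $\alpha_s \in 2^\omega$ with $\varrho(\alpha_s) = \varphi(s)$. The plan is to construct a countable family $\{y_n\}_{n \geq 1} \subset 2^\omega$ such that, setting $K_n := K(y_n)^d$, we have (a) every $E \in \iM(\{K_n\}_{n \geq 1})$ satisfies $\dim_H E = \overline{\dim}_B E$, and (b) $\{\dim_H E : E \in \iM(\{K_n\}_{n \geq 1})\} \cup \{d \sup A'\} = A$. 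Lemma~\ref{l:building K}, applied to this family, then produces a compact $K \subset [0,1]^d$ with $\dim_H E = \overline{\dim}_B E$ on $\iM_K$ and $\{\dim_H E : E \in \iM_K\} = A$, as required.

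Each $y_n$ will be a concatenation of long prefixes $\alpha_s \restriction L$ for varying $s \in 2^{<\omega}$, arranged in a tree-like fashion with block lengths growing very rapidly along every descending path. By Theorem~\ref{t:Cx}, any $E \in \iM(\{K_n\}_{n \geq 1})$ is sandwiched between $C(x)^d + v_1$ and a finite union of translates of $C(x)^d$, where $x$ arises as some limit $T^{m_k}(y_{n_k}) \to x$; Claim~\ref{cl:1} and Claim~\ref{c:product} then give $d \underline{\varrho}(x) \leq \dim_H E \leq \overline{\dim}_B E \leq d \overline{\varrho}(x)$. Iterated applications of Lemma~\ref{l:balanced}, exploiting the rapid growth of block lengths, show that every such limit $x$ corresponds to a path $\tilde x \in 2^\omega$ in the tree with $\varrho(x)$ existing and equal to $\overline{\varphi}(\tilde x) \in A'$. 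Conversely, the tree is designed so that every $\tilde x \in 2^\omega$, and hence every value $\overline{\varphi}(\tilde x)$, is realized by some shift limit. This yields both (a) and (b).

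The main obstacle is the combinatorial engineering of the family $\{y_n\}$: every value $\overline{\varphi}(\tilde x)$ must arise as the density of some shift limit, while no spurious shift limit is allowed to produce a density outside $A'$ or to oscillate. Ruling out oscillation is the delicate part, since a shift straddling a block boundary could a priori mix two values $\varphi(s)$ and $\varphi(s')$; Lemma~\ref{l:balanced} is tailored precisely to exclude this, provided that in any decomposition of the limit the ``second block'' has length tending to infinity. Arranging the block lengths so that this hypothesis holds uniformly over all sequences of shifts that give rise to a limit, while still realizing every value of $\overline{\varphi}$, is the bulk of the remaining work.
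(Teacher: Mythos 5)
Your treatment of $(3)\Rightarrow(1)$ and $(1)\Rightarrow(2)$ matches the paper (the paper shows $\iM_K$ is $F_\sigma$ while you observe directly that it is analytic as a projection; both suffice in combination with the Mattila--Mauldin measurability result). The gap is in $(2)\Rightarrow(3)$, and it is a real one.

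Your construction assigns to each $s\in 2^{<\omega}$ its \emph{own} balanced sequence $\alpha_s$ with $\varrho(\alpha_s)=\varphi(s)$ and builds the $y_n$ as concatenations of long prefixes of the various $\alpha_s$. The paper does something fundamentally different: it fixes only \emph{two} balanced sequences $\alpha$ and $\beta$, with densities $a=\inf B$ and $b=\sup B$, and encodes the value $\varphi(s)$ not by choosing a different balanced sequence, but by choosing the \emph{length} $k(s)$ of the $\beta$-block in the two-block pattern $\phi(s)=(\alpha\restriction n)^\frown(\beta\restriction k(s))$. This distinction is exactly what makes Lemma~\ref{l:balanced} usable: that lemma is stated for \emph{precisely two} balanced sequences $\alpha,\beta$ and its conclusion is $\varrho(x)\in\{a,b\}$. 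It says nothing about a family $\{\alpha_s\}_{s}$, and no amount of ``iterating'' it will produce the conclusion $\varrho(x)=\overline{\varphi}(\tilde x)$ for an arbitrary path $\tilde x$.

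Here is the concrete failure in your scheme. Take, say, $A'=\{1/4\}\cup\big(\Q\cap(1/4,3/4)\big)\cup\{1\}$, which is Borel and contains its infimum and supremum. Then $\varphi(s)\in A'$ for all $s$, so among the $\alpha_s$ there are balanced sequences with densities $q_n\in\Q\cap(1/4,3/4)$ with $q_n\to 3/4$. Two balanced sequences of nearby densities agree on long prefixes, so a sequence of such $\alpha_{s_n}$ can converge in $2^\omega$ to some $z$ with $\varrho(z)=3/4\notin A'$. Consequently a shift limit $x=\lim_n T^{m_n}(y_{k_n})$ with $m_n\to\infty$ can land on such a $z$: your block lengths being long means the first visible block of $T^{m_n}(y_{k_n})$ is a tail of a single $\alpha_{s_n}$, but nothing prevents the densities $\varphi(s_n)$ from drifting to a value outside $A'$. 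You would then produce a microset of dimension $3d/4\notin A$, which is fatal. The paper sidesteps this entirely: with only $\alpha$ and $\beta$ in play, Lemma~\ref{l:balanced} pins every far-shift limit to density $a$ or $b$, both of which are in $B$ precisely because $A$ contains its infimum and supremum; the intermediate values of $A$ are then recovered exclusively from the bounded-shift limits $T^m(\psi(x))$, whose densities equal $\varrho(\psi(x))=\overline{\varphi}(x)$.

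To repair your argument you would essentially have to abandon the one-balanced-sequence-per-node idea and adopt the two-sequence, variable-block-length encoding that the paper uses, together with the single continuous map $\psi\colon 2^\omega\to 2^\omega$ (rather than a hand-built tree of $y_n$'s) and a dense sequence $\{x_n\}$ to feed into Lemma~\ref{l:building K}.
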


\begin{proof}  
The direction $\eqref{i1} \Rightarrow \eqref{i2}$ is straightforward by Fact~\ref{f:ineq}.

Now we prove $\eqref{i2} \Rightarrow \eqref{i3}$. Assume that $A = \{\dim E : E \in \iM_K\}$ for some compact set $K \in \iK(\R^d)$. Theorem~\ref{t:Fu} yields that $A$ contains its infimum and supremum as well. To see that $A$ is analytic, note that the set $\iM_K$ is $F_\sigma$, since the set $\{E \in \iM_K : E \cap [\eps, 1 - \eps]^d \neq \emptyset\}$ is closed for any $\varepsilon \in (0,1/2)$. Mattila and Mauldin  \cite{MM} proved that the mapping $\dim \colon \iK(\R^d)\to [0,d]$ is Borel measurable. Therefore, we obtain that $A$ is the image of a Borel set under a Borel map, hence it is analytic by \cite[Proposition~14.4]{Ke}. 
	
Finally, we show $\eqref{i3} \Rightarrow \eqref{i1}$. Fix an analytic set $A \subset [0, d]$ which contains its infimum and supremum. Define $B=\{z/d: z\in A\}$, then $B\subset [0,1]$ is analytic, and set $a = \min B$ and $b = \max B$. If $a = b = 0$, then $K$ can be a singleton. Hence we may assume that $b > 0$. Applying Lemma \ref{l:varphi existence} for the analytic set $B$ yields a map $\varphi \colon 2^{<\omega} \to [0,\infty)$ such that $\overline{\varphi}(x) = \lim_{n \to \infty} \varphi(x \restriction n)$ exists for all $x\in 2^{\omega}$ and $\overline{\varphi}(2^{\omega})=B$. We first check that we may assume 
\begin{equation}
\label{e:q3 a <= varphi <= b}
a \le \varphi(s) \le b \text{ for each $s \in 2^{<\omega}$.}
\end{equation}
Indeed, let us replace a value $\varphi(s)$ by $a$ if $\varphi(s) < a$, and replace $\varphi(s)$ by $b$ if $\varphi(s) > b$. As $\overline{\varphi}(x) \in B \subset [a, b]$ for each $x \in 2^\omega$, the values of $\overline{\varphi}$ do not change by modifying $\varphi$ in this way. 
	
We now construct a continuous map $\psi \colon 2^\omega \to 2^\omega$ and then use compact sets of the form $(K(\psi(x)))^d$ to construct $K$. Let $\alpha$ and $\beta$ be the sequences provided by Lemma~\ref{l:balanced} for $a$ and $b$. To construct $\psi$, first we specify a mapping $\phi \colon  2^{<\omega} \to 2^{<\omega}$ such that $\phi(s)$ is a segment of either $\alpha$ or $\beta$ for all $s\in 2^{<\omega}$. Let $\phi(\emptyset) = \emptyset$. For $s \in 2^{<\omega}$ with $\length(s)=n\geq 1$ define $\phi(s) = (\alpha \restriction n) ^\frown (\beta \restriction k)$, where $k=k(s)$ is a positive integer such that $ \sqrt{n}-1<k< n \sqrt{n}+1$ and 
\begin{equation} \label{e:q3  lambda close to varphi}
\left|\frac{na + k b}{n+k} - \varphi(s)\right| \le \frac{2}{\sqrt{n}},
\end{equation}
where the existence of such a $k$ can be checked as follows. Inequality \eqref{e:q3 a <= varphi <= b} implies that $\varphi(s) \in [a, b]$, and consider the real function \begin{equation*} 
r\colon [\sqrt{n}-1,n\sqrt{n}+1] \to [a,b], \quad  r(z)=\frac{an+bz}{n+z}.
\end{equation*}  
The inequalities 
\begin{equation*} 
\left|r\left(\sqrt{n}\right)-a\right|\leq \frac{2}{\sqrt{n}}, \quad  \left|r\left(n\sqrt{n}\right)-b \right| \leq \frac{1}{\sqrt{n}},\text { and }  r(z+1)-r(z)\leq \frac{1}{n}
\end{equation*} 
for all $\sqrt{n}-1\leq z\leq n\sqrt{n}$ easily imply the existence of $k$ satisfying \eqref{e:q3  lambda close to varphi}. 
	
For $x\in 2^{\omega}$ define 
\begin{equation} \label{e:psi}
\psi(x) = \phi(x\restriction 0) ^\frown \phi(x \restriction 1) ^\frown  \phi(x \restriction 2)   \dots.
\end{equation}
As $\phi(s)\neq \emptyset$ whenever $s\neq \emptyset$, the map $\psi$ is continuous. Since $b>0$ and $\beta$ is balanced with $\varrho(\beta)=b>0$, we obtain that $\psi(x)\neq \mathbf{0}$ for all $x\in 2^{\omega}$, where $\mathbf{0}\in 2^{\omega}$ is the zero sequence. We now claim that $\psi$ satisfies 
\begin{equation} \label{e:dvarphi}
\varrho(\psi(x)) = \overline{\varphi}(x)\text{ for each $x \in 2^\omega$.} 
\end{equation}
Let us fix $x \in 2^\omega$, and let 
\begin{equation*} 
\psi_n(x) = \phi(x \restriction 0) ^\frown \phi(x \restriction 1) ^\frown \dots ^\frown  \phi(x \restriction n).
\end{equation*} 
An elementary calculation using $k(s)=o(n^2)$ as $\length(s)=n\to \infty$ shows that 
\begin{equation*} 
\frac{\length(\phi(x \restriction n))}{\length(\psi_n(x))} \to 0 \text{ as } n \to \infty,
\end{equation*}  
hence it is enough to show that $\varrho(\psi_n(x)) \to \overline{\varphi}(x)$. Therefore, it is enough to show that $\varrho(\phi(x \restriction n)) \to \overline{\varphi}(x)$. Since $\alpha$ and $\beta$ are balanced, for $k=k(x \restriction n)$ we obtain 
\begin{equation} \label{e:nk}
|\varrho(\alpha \restriction n) - a| \le \frac{1}{n} \quad \text{and} \quad  
|\varrho(\beta \restriction k) - b| \le \frac{1}{k}.
\end{equation}  
Then \eqref{e:q3  lambda close to varphi}  and \eqref{e:nk} imply that 
\begin{align*} 
|\varrho(\phi(x \restriction n)) - \varphi(x \restriction n) |&= \left|\frac{n \varrho(\alpha \restriction n) +k \varrho(\beta \restriction k)}{n + k} - \varphi(x \restriction n)\right| \\
&\le \frac{2}{n + k} + \frac{2}{\sqrt{n}},
\end{align*}
which tends to $0$. This implies that $\varrho(\phi(x \restriction n)) \to \overline{\varphi}(x)$, so the proof of \eqref{e:dvarphi} is complete. As $\psi$ is continuous, Fact~\ref{f:cont} implies that the map
\begin{equation} \label{e:K_x}
x \mapsto K(\psi(x)) \text{ is also continuous.}
\end{equation}
Let $\{x_n\}_{n\geq 1}$ be a dense subset of $2^\omega$ such that
\begin{equation} \label{e:x1def} 
\overline{\varphi}(x_1) = b.
\end{equation} 
Define  
\begin{equation*} K_n = K(\psi(x_n))^d \text{ for all } n\geq 1.
\end{equation*} 
First we show that 
\begin{equation} \label{e:subs} A\subset \{\dim_H E : E \in \iM(\{K_n\}_{n\geq 1})\}.
\end{equation} 
Let $z \in A$ be arbitrary. By the definition of $\varphi$ there exists $x \in 2^\omega$ such that $\overline{\varphi}(x) = z/d\in B$. Fact~\ref{f:dim} and \eqref{e:dvarphi} imply
\begin{equation*}
\dim_H (K(\psi(x))^d)=d \varrho(\psi(x))=z.
\end{equation*} 
As $\{x_n\}_{n\geq 1}$ is dense in $2^{\omega}$, we can find a subsequence of positive integers $k_n\uparrow \infty$ such that $x_{k_n} \to x$. By \eqref{e:K_x} we obtain that 
\begin{equation*} 
K_{k_n}=K(\psi(x_{k_n}))^d \to K(\psi(x))^d \quad \text{as} \quad  n\to \infty.
\end{equation*} 
As $\psi(x)\neq \mathbf{0}$, we obtain $K(\psi(x))\cap (0,1)\neq \emptyset$, hence $K(\psi(x))^d\cap (0,1)^d\neq \emptyset$. Thus $K(\psi(x))^d \in \iM(\{K_n\}_{n\geq 1})$, so $z\in \{\dim_H E : E \in \iM(\{K_n\}_{n\geq 1})\}$ proving \eqref{e:subs}.

Now we claim 
\begin{equation} \label{e:EiG} 
\dim_H E=\overline{\dim}_B \, E\in A \text{ for all } 
E \in \iM(\{K_n\}_{n\geq 1}).
\end{equation} 
Let $E \in \iM(\{K_n\}_{n\geq 1})$ and let $\dim$ be one of $\dim_H$ or $\overline{\dim}_B$, we will calculate $\dim E$ independently of the choice of the dimension. Since we are only interested in $\dim E$, by Theorem~\ref{t:Cx} we may suppose that $E=K(y)^d$ for some $y\in 2^{\omega}$ for which there exists a subsequence of positive integers $k_n\uparrow \infty$ and $m_n\in \omega$ such that $T^{m_n}(\psi(x_{k_n}))\to y$. We may assume by choosing a subsequence that $x_{k_n}\to x$ for some $x \in 2^\omega$. 

First suppose that $\{m_n\}_{n\geq 1}$ is bounded. By choosing a subsequence we may assume that $m_n=m$ for all $n$. The continuity of $\psi$ implies $y=T^m(\psi(x))$. As $\varrho(T^m(\psi(x)))=\varrho(\psi(x))$, using \eqref{e:dvarphi} and $\overline{\varphi}(x)\in B$ we obtain  
\begin{equation*} \dim E=\dim (K(y)^d)=\dim (K(\psi(x))^d)=d\varrho(\psi(x))=d\overline{\varphi}(x)\in A,
\end{equation*} 
and we are done. 

Next, assume that $\{m_n\}_{n\geq 1}$ is not bounded. We may suppose by choosing a subsequence that $m_n\to \infty$. Applying Lemma \ref{l:balanced} for $T^{m_n}(\psi(x_{k_n}))\in 2^{\omega}$ implies that $\varrho(y)=a$ or $\varrho(y)=b$, so 
\begin{equation*}
\dim E=\dim (K(y)^d)=d\varrho(y)\in \{\min A, \max A\}, 
\end{equation*} 
which finishes the proof of \eqref{e:EiG}. 

Finally, we are able to define $K$ and finish the proof of \eqref{i1}.
Since \eqref{e:EiG} yields $\dim_H E=\overline{\dim}_B \, E$ for all $E \in \iM(\{K_n\}_{n\geq 1})$, applying Lemma~\ref{l:building K} for $K_n$ implies that there exists a compact set $K\subset [0,1]^d$ such that 
\begin{equation} \label{e:dimeq}
\dim_H E=\overline{\dim}_B \, E \text{ for all } E\in \iM_K    
\end{equation}
and 
\begin{equation} \label{e:gam} \{\dim_H E : E \in \iM_K\} =\left\{\dim_H E : E \in  \iM(\{K_n\}_{n\geq 1})\right\} \cup \{\gamma\}, 
\end{equation} 
where $\gamma \defeq \sup\{\dim_H K_n: n\geq 1\}$. In order to remove $\gamma$ from the right hand side of \eqref{e:gam} we prove
\begin{equation} \label{e:gamk1}
\gamma \in \left\{\dim_H E : E \in  \iM(\{K_n\}_{n\geq 1})\right\}.
\end{equation} 
Claim~\ref{cl:1}, \eqref{e:dvarphi}, and \eqref{e:x1def} imply that 
\begin{equation} \label{e:=b} 
\dim_H K_1=\varrho(\psi(x_1)) = \overline{\varphi}(x_1) = b, 
\end{equation}
and also adding \eqref{e:q3 a <= varphi <= b} yields
\begin{equation*} 
\gamma=\sup\{ \dim_H K_n: n\geq 1\}=\sup\{\varrho(\psi(x_n)): n\geq 1\}=\sup\{ \overline{\varphi}(x_n) : n\geq 1\}\leq b. 
\end{equation*}
This and \eqref{e:=b} imply $\gamma= \dim_H K_1$, and clearly $K_1\in \iM(\{K_n\}_{n\geq 1})$, thus \eqref{e:gamk1} follows. By \eqref{e:gam}, \eqref{e:gamk1}, \eqref{e:subs}, and \eqref{e:EiG} we obtain 
\begin{equation} \label{e:MKn}
\{\dim_H E : E \in \iM_K\} =\left\{\dim_H E : E \in  \iM(\{K_n\}_{n\geq 1})\right\}=A.
\end{equation} 
Equations \eqref{e:dimeq} and \eqref{e:MKn} imply \eqref{i1}, and the proof of the theorem is complete.
\end{proof}

\section{Compact families of compact sets} \label{s:K}

\subsection{The case of Hausdorff dimension} \label{ss:Haus}
The goal of this subsection is to prove Theorem~\ref{t:compact family} after some preparation. We define parametrized fractal percolations in axis parallel cubes $Q\subset \R^d$ as follows.

\begin{definition} \label{d:p}
Let $Q\subset \R^d$ be an axis parallel cube with side length $r$. For all $n\ge 0$, $Q$ can be written as a union of $2^{dn}$ many non-overlapping closed cubes of side length $r2^{-n}$. We denote by $\iD_n$ the collection of these subcubes, and the elements of $\bigcup_{n \ge 0} \iD_n$ are called the \emph{dyadic subcubes of $Q$}. Given $\alpha_n\in [0,d]$ for all $n\geq 1$, we construct a random compact set $\Gamma(\{\alpha_n\}_{n\geq 1})\subset Q$ as follows. We keep each of the $2^d$ cubes in $\iD_1$ with probability $2^{-\alpha_1}$. Let $\Delta_1\subset \iD_1$ be the collection of kept cubes and let $S_1=\bigcup \Delta_1$ be their union. If $\Delta_{n-1}\subset \iD_{n-1}$ and $S_{n-1}=\bigcup \Delta_{n-1}$ are already defined, then we keep each cube in $D\in \iD_{n}$ for which $D\subset S_{n-1}$ independently with probability $2^{-\alpha_{n}}$. Denote by $\Delta_{n}\subset \iD_{n}$ the collection of kept cubes and by $S_{n}=\bigcup \Delta_{n}$ their union. Define our \emph{percolation limit set with generation dependent retention probabilities $2^{-\alpha_n}$} as
\begin{equation*} 
\Gamma(\{\alpha_n\}_{n\geq 1})=\bigcap_{n=1}^{\infty} S_n.
\end{equation*}
If $\alpha_n=\alpha$ for all $n\geq 1$ then we simply use the notation $\Gamma(\alpha)$ instead of $\Gamma(\{\alpha_n\}_{n\geq 1})$.

We say that a random set $X$ \emph{stochastically dominates} $Y$ if they can be coupled (defined on a common probability space) such that $Y\subset X$ almost surely. For two sets $A,B$ we write $A\subset^{\star} B$ if $A\setminus B$ is countable. 
\end{definition}

The following theorem is due to Hawkes \cite[Theorem~6]{H} in the context of trees, see also \cite[Theorem~9.5]{MP}.

\begin{theorem}[Hawkes] \label{t:H} For every $\beta\in [0,d]$ and every compact set $K\subset Q$ the following properties hold:
\begin{enumerate}
\item \label{e:H1} if $\dim_H K<\beta$, then almost surely, $K\cap \Gamma(\beta)=\emptyset$,
\item \label{e:H2} if $\dim_H K>\beta$, then $K\cap \Gamma(\beta)\neq \emptyset$ with positive probability,
\item \label{e:H3} if $\dim_H K>\beta$, then almost surely, $\dim_H(K\cap \Gamma(\beta))\leq \dim_H K-\beta$.
\end{enumerate} 
\end{theorem}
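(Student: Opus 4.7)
The plan is to handle parts (1) and (3) by a dyadic first-moment covering argument, and part (2) by the Frostman energy / second moment method, using only the branching independence of $\Gamma(\beta)$. Throughout I will use the elementary observation that any set $E\subset Q$ of diameter $\delta$ is contained in the union of at most $2^d$ dyadic subcubes of $Q$ of side length $\leq 2\delta$, so every Borel cover of $K$ can be refined to a dyadic cover at a loss of a dimensional constant $C_d$.

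For part (1), assume $\dim_H K<\beta$. Given $\eps>0$, choose a cover $\{E_i\}$ of $K$ with $\sum_i(\diam E_i)^\beta<\eps$ and refine it to a dyadic cover $\{D_j\}$ of $K$ by cubes $D_j\in\iD_{\ell_j}$ satisfying $\sum_j 2^{-\beta\ell_j}<C_d\eps$. A dyadic cube $D\in\iD_\ell$ meets $\Gamma(\beta)$ only if $D\in\Delta_\ell$, an event of probability $2^{-\beta\ell}$, so the union bound gives $\P(K\cap\Gamma(\beta)\neq\emptyset)\leq C_d\eps$; letting $\eps\to 0$ proves (1). For part (3), fix $t>\dim_H K$ and set $u=t-\beta$. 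The same construction with $\sum_j 2^{-t\ell_j}<\eps$ yields a random subcollection $\{D_j:D_j\in\Delta_{\ell_j}\}$ that still covers $K\cap\Gamma(\beta)$ (since the unique level-$\ell_{j}$ cube of any $x\in\Gamma(\beta)$ must survive), and whose $u$-content has expectation $O\bigl(\sum_j 2^{-u\ell_j}\cdot 2^{-\beta\ell_j}\bigr)=O(\eps)$. Hence $\mathbb{E}[\mathcal{H}^u_\infty(K\cap\Gamma(\beta))]=O(\eps)$ for every $\eps>0$, so $\dim_H(K\cap\Gamma(\beta))\leq u$ a.s.; intersecting over a sequence $t_n\downarrow \dim_H K$ yields (3).

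For part (2), pick $s\in(\beta,\dim_H K)$ and use Frostman's lemma to get a Borel probability measure $\mu$ on $K$ with finite $s$-energy, which on the bounded set $Q$ forces finite $\beta$-energy. Define random measures by $d\mu_n=2^{\beta n}\mathbf{1}_{S_n}\,d\mu$. Then $\mu_n(\R^d)$ is a nonnegative martingale of mean $\mu(K)=1$, and grouping pairs $(x,y)$ by the deepest level $k(x,y)$ at which they share a common dyadic ancestor, together with the branching independence of the percolation in disjoint subtrees, gives
\[
\mathbb{E}\bigl[\mu_n(\R^d)^2\bigr]=\int\!\!\int 2^{\beta k(x,y)}\,d\mu(x)\,d\mu(y)\leq C\int\!\!\int |x-y|^{-\beta}\,d\mu(x)\,d\mu(y)<\infty,
\]
uniformly in $n$. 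Standard $L^2$-martingale convergence then produces a weak limit $\mu_\infty$ with $\mathbb{E}[\mu_\infty(\R^d)]=1$ and support in $\bigcap_n(S_n\cap K)=K\cap\Gamma(\beta)$; in particular $K\cap\Gamma(\beta)\neq\emptyset$ with positive probability.

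The main obstacle I expect is the technical step in part (2): verifying that the $L^2$-bounded martingale of random measures $\mu_n$ actually admits an almost sure weak limit that is a genuine Borel measure with the claimed total mass and support on $\Gamma(\beta)$. Parts (1) and (3) are routine once the dyadic refinement observation is in place, reducing to one-line first-moment estimates on the expected surviving mass of the cover.
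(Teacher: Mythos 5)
The paper does not prove Theorem~\ref{t:H}; it quotes it as a known result of Hawkes~\cite{H}, pointing to~\cite[Theorem~9.5]{MP} for a proof, so there is no in-paper argument to compare against. Your proposal is the standard proof one finds in those references and is correct: a dyadic first-moment covering bound for~\eqref{e:H1} and~\eqref{e:H3}, and Frostman's lemma together with a second-moment (energy) estimate for~\eqref{e:H2}.

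One simplification worth recording for~\eqref{e:H2}, which eliminates the technical step you flag at the end: there is no need to produce a weak limit of the random measures $\mu_n$. Work only with the real-valued martingale $Y_n=\mu_n(\R^d)=2^{\beta n}\mu(S_n)$. It is nonnegative and, by your energy computation, bounded in $L^2$; hence it converges a.s.\ and in $L^1$ to some $Y_\infty$ with $\mathbb{E}[Y_\infty]=\mathbb{E}[Y_0]=1$, so $\P(Y_\infty>0)>0$. Since the sets $S_n$ are nested, $\mu(S_n)$ is nonincreasing, so on the event $\{Y_\infty>0\}$ one in fact has $Y_n>0$ for every $n$, i.e.\ $S_n\cap K\neq\emptyset$ for all $n$. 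By nested compactness $K\cap\Gamma(\beta)=\bigcap_n(S_n\cap K)\neq\emptyset$ on this event of positive probability, which is exactly~\eqref{e:H2}. This avoids any discussion of weak-$\ast$ limits of random measures, which is the part of your sketch that genuinely requires extra justification (one would otherwise argue via a.s.\ convergence of $\mu_n(D)$ for each of the countably many dyadic cubes $D$, then check tightness and that the limit is supported on $\Gamma(\beta)\cap K$).
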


\begin{lemma} \label{l:H}
Let $K\subset Q$ be compact and let $0<\beta <\dim_H K$. Then there exists a constant $c=c(Q,K,\beta)>0$ such that the following holds. If $\alpha_n \in [0,\beta]$ for all $n\geq 1$ and $\alpha_n\to \alpha $, then 
\begin{equation} \label{e:Kc}
\P(\dim_H ( K\cap \Gamma(\{ \alpha_n \}_{n\geq 1}))\geq \beta-\alpha)\geq c.
\end{equation}
\end{lemma}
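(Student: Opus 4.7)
I will apply the standard stochastic codimension (second moment) method to a Frostman measure on $K$, but with one twist forced on us by the uniformity requirement: the constant $c$ will be extracted from a second-moment estimate that uses only the a priori bound $\alpha_i\leq\beta$, while the dimensional lower bound $\beta-\alpha$ will come from a sequence-dependent energy estimate that exploits $\alpha_n\to\alpha$. Concretely, I will build a non-negative martingale of random measures $\mu_n$ whose a.s.\ weak limit $\mu_\infty$ is supported on $K\cap\Gamma(\{\alpha_n\})$, which is non-trivial with probability $\geq c(Q,K,\beta)$ uniformly in the sequence, and which has almost surely finite $t$-energy for every $t<\beta-\alpha$.

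\textbf{Construction and uniform second moment.} Since $\dim_H K=\gamma>\beta$, Frostman's lemma supplies a Borel probability measure $\mu$ on $K$ with finite $\beta$-energy $I_\beta(\mu):=\int\!\!\int|x-y|^{-\beta}\,d\mu(x)\,d\mu(y)<\infty$. Setting $\sigma_n=\alpha_1+\cdots+\alpha_n$, define $\mu_n=2^{\sigma_n}\mathbf{1}_{S_n}\mu$. A routine independence calculation shows that $(\mu_n(A))_{n\geq 1}$ is a non-negative martingale with $\mathbb{E}\mu_n(A)=\mu(A)$ for every Borel $A\subseteq Q$, and
\[
\mathbb{E}\mu_n(Q)^2=\int\!\!\int 2^{\sigma_{k(x,y)\wedge n}}\,d\mu(x)\,d\mu(y),
\]
where $k(x,y)$ is the largest level at which $x$ and $y$ lie in a common dyadic subcube of $Q$. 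The hypothesis $\alpha_i\leq\beta$ yields $2^{\sigma_k}\leq 2^{k\beta}\leq(r\sqrt d)^\beta|x-y|^{-\beta}$ (with $r$ the side length of $Q$), so $\mathbb{E}\mu_n(Q)^2\leq(r\sqrt d)^\beta I_\beta(\mu)=:C_0$, a bound depending only on $Q$, $K$, $\beta$. Standard $L^2$ martingale convergence then produces a random measure $\mu_\infty$, supported on $K\cap\Gamma(\{\alpha_n\})$, with $\mu_n\to\mu_\infty$ weakly a.s.\ and in $L^2$ on Borel sets. Paley--Zygmund gives $\mathbb{P}(\mu_\infty(Q)\geq 1/2)\geq 1/(4C_0)=:c$, uniform in the sequence.

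\textbf{Energy estimate and conclusion.} Fix $t<\beta-\alpha$ and choose $\eps>0$ with $t+\alpha+\eps<\beta$. Because $\alpha_n\to\alpha$, there exists $N$ (depending on the sequence) with $\alpha_n\leq\alpha+\eps$ for $n>N$, and therefore $\sigma_k\leq N\beta+k(\alpha+\eps)$ for every $k$. The same two-point computation yields
\[
\mathbb{E}\,\mathcal{E}_t(\mu_n)\leq 2^{N\beta}(r\sqrt d)^{\alpha+\eps}\,I_{t+\alpha+\eps}(\mu),
\]
which is finite because $t+\alpha+\eps<\beta$ gives $I_{t+\alpha+\eps}(\mu)\leq(r\sqrt d)^{\beta-(t+\alpha+\eps)}I_\beta(\mu)<\infty$. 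By Fatou's lemma $\mathcal{E}_t(\mu_\infty)<\infty$ almost surely. Hence, on the event $\{\mu_\infty(Q)\geq 1/2\}$, which has probability $\geq c$, the measure $\mu_\infty$ is a non-trivial measure on $K\cap\Gamma(\{\alpha_n\})$ with finite $t$-energy, so $\dim_H(K\cap\Gamma(\{\alpha_n\}))\geq t$ there. The events $\{\dim_H(K\cap\Gamma(\{\alpha_n\}))\geq t_m\}$ are decreasing as $t_m\uparrow\beta-\alpha$, so continuity of $\mathbb{P}$ from above gives \eqref{e:Kc} with the same constant $c$.

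\textbf{Main difficulty.} The whole point is the two-speed estimate. For uniformity of $c$ in $\{\alpha_n\}$ the second moment must be controlled using only $\alpha_i\leq\beta$, which is precisely why one chooses $\mu$ with merely $I_\beta(\mu)<\infty$ and why the hypothesis $\gamma>\beta$ (rather than $\gamma>\alpha$) appears. The dimension exponent $\beta-\alpha$ is then the sharpest value admissible in the energy estimate under that Frostman restriction: the weight $2^{\sigma_k}$ behaves like $2^{k\alpha}$ for large $k$ thanks to $\alpha_n\to\alpha$, but only up to a sequence-dependent constant from the first $N$ levels; fortunately this non-uniformity is harmless because only almost sure finiteness of $\mathcal{E}_t(\mu_\infty)$ on the already-fixed positive probability event is needed.
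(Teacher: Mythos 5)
Your proposal is correct, but it takes a genuinely different route from the paper's proof. The paper sets $c = \P\bigl(K\cap\Gamma(\beta)\neq\emptyset\bigr)$, which is positive by Hawkes' theorem, part~(2), and then runs an \emph{independent} percolation $\Gamma(\{\beta-\alpha_n\}_{n\geq 1})$ coupled with $\Gamma(\{\alpha_n\}_{n\geq 1})$: the intersection of the two has the retention probabilities of $\Gamma(\beta)$, so it meets $K$ with probability $\geq c$ uniformly in the sequence; meanwhile $\alpha_n\to\alpha$ is used only to stochastically dominate $\Gamma(\{\beta-\alpha_n\})$ by finitely many scaled copies of $\Gamma(\delta)$ for each $\delta<\beta-\alpha$, so that Hawkes~(1) plus Fubini forces $\dim_H(K\cap\Gamma(\{\alpha_n\}))\geq\beta-\alpha$ on the event that the intersection is nonempty. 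You instead bypass Hawkes' theorem entirely and run the second-moment/energy machinery directly on the generation-dependent percolation. Your two-speed idea — extracting the uniform constant $c$ from a sequence-independent $\beta$-energy bound (using only $\alpha_i\leq\beta$), while proving the dimensional lower bound from a sequence-dependent $(\alpha+\eps)$-level energy bound — is exactly the right mechanism to get uniformity of $c$ in the direct argument, and your computation of $\mathbb{E}\,\mu_n(Q)^2 = \int\!\!\int 2^{\sigma_{k(x,y)\wedge n}}\,d\mu\,d\mu$ and the resulting bounds are correct. The two proofs are of course built on the same underlying energy estimates (Hawkes' theorem is itself proved this way), but the paper's version is shorter because it uses Hawkes as a black box and the neat observation that intersecting two independent percolations multiplies the retention probabilities, whereas yours is self-contained and makes the dependence $c=c(Q,K,\beta)=1/(4(r\sqrt d)^\beta I_\beta(\mu))$ explicit. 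Two small points you should make explicit if you write this up: the passage from $\mathbb{E}\,\mathcal{E}_t(\mu_n)\leq C_N$ to $\mathcal{E}_t(\mu_\infty)<\infty$ a.s.\ uses that energy is lower semicontinuous under weak convergence of measures (and then Fatou in expectation), and the existence of the limiting random measure $\mu_\infty$ supported on $K\cap\Gamma(\{\alpha_n\})$ should be justified by the standard dyadic-cube martingale argument rather than only by the convergence of the total mass.
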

\begin{proof} Define $c=\P(K\cap \Gamma(\beta)\neq \emptyset)$, we have $c>0$ by Theorem~\ref{t:H}~\eqref{e:H2}. For all $n\geq 1$ let $\delta_n=\beta-\alpha_n\geq 0$  and consider $K\cap \Gamma(\{\alpha_n\}_{n\geq 1})\cap \Gamma(\{\delta_n\}_{n\geq 1})$, where $\Gamma(\{\alpha_n\}_{n\geq 1})$ and $\Gamma(\{\delta_n\}_{n\geq 1})$ are independent. Since  $\Gamma(\{\alpha_n\}_{n\geq 1})\cap \Gamma(\{\delta_n\}_{n\geq 1})$ stochastically dominates $\Gamma(\beta)$, we obtain
	\begin{equation} \label{e:gamma1} 
	\P(K\cap \Gamma(\{\alpha_n\}_{n\geq 1})\cap \Gamma(\{\delta_n\}_{n\geq 1})\neq \emptyset)\geq c.
	\end{equation} 
Fix any compact set $C\subset \R^d$ with $\dim_H C<\beta-\alpha$. Let us choose $\delta>0$ such that $\dim_H C<\delta<\beta-\alpha$. Applying Theorem~\ref{t:H}~\eqref{e:H1} yields $\P(C\cap \Gamma(\delta)\neq \emptyset) = 0$. As $\Gamma(\{\delta_n\}_{n\geq 1})$ is stochastically dominated by the union of finitely many similar copies of $\Gamma(\delta)$, we obtain that $\P(C \cap  \Gamma(\{\delta_n\}_{n\geq 1})\neq \emptyset) = 0$.
The independence of $\Gamma(\{\alpha_n\}_{n\geq 1})$ and $\Gamma(\{\delta_n\}_{n\geq 1})$, and Fubini's theorem allow us to replace $C$ by $K\cap \Gamma(\{\alpha_n\}_{n\geq 1})$, and we obtain
\begin{equation*} 
\P\Big(K\cap \Gamma(\{\alpha_n\}_{n\geq 1})\cap \Gamma(\{\delta_n\}_{n\geq 1})\neq \emptyset \text{ and } \dim_H(K\cap \Gamma(\{\alpha_n\}_{n\geq 1}))<\beta-\alpha\Big)=0.
	\end{equation*}
This and \eqref{e:gamma1} imply \eqref{e:Kc}, and the proof is complete.
\end{proof}

\begin{theorem}
\label{t:compact family}
Let $K \subset \R^d$ be a non-empty compact set and let $A \subset [0, \dim_H K]$. Then the following are equivalent: 
\begin{enumerate}
\item \label{ic1} There is a compact set $\mathcal{C} \subset \mathcal{K}(K)$ with $\{\dim_H C : C \in \mathcal{C}\} = A$;
\item \label{ic2} $A$ is an analytic set. 
\end{enumerate} 
\end{theorem}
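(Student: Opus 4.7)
For the forward direction $(1) \Rightarrow (2)$, a compact subset $\iC \subset \iK(K) \subset \iK(\R^d)$ is Borel in the Polish space $\iK(\R^d)$, and $\dim_H \colon \iK(\R^d) \to [0,d]$ is Borel measurable by the theorem of Mattila and Mauldin \cite{MM}. Hence $A = \dim_H(\iC)$ is a Borel image of a Borel set and is therefore analytic by \cite[Proposition~14.4]{Ke}.

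For the converse, let $\gamma := \dim_H K$. The case $\gamma = 0$ is trivial since then $A = \{0\}$ and $\iC = \{\{y\}\}$ works, so assume $\gamma > 0$. Invoke Fact~\ref{f:dimK} to choose $y \in K$ with $\dim_H(K \cap B(y, r)) = \gamma$ for every $r > 0$, and after a similarity assume $K \subset [0,1]^d$ with $y$ in the interior. Use Lemma~\ref{l:varphi existence} to extract $\varphi \colon 2^{<\omega} \to [0, \gamma]$ such that $\overline{\varphi}(x) := \lim_{n} \varphi(x \restriction n)$ exists for every $x \in 2^\omega$ and $\overline{\varphi}(2^\omega) = A$. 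The plan is to build a continuous map $\Phi \colon 2^\omega \to \iK(K)$ satisfying $\dim_H \Phi(x) = \overline{\varphi}(x)$ for all $x$; then $\iC := \Phi(2^\omega)$ is compact (being a continuous image of $2^\omega$) with dimension spectrum exactly $A$.

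To construct $\Phi$ I would couple fractal percolations across $x \in 2^\omega$ by using a single family of i.i.d.\ uniform $[0,1]$ random variables $\{U_D\}$ indexed by the dyadic subcubes of $[0,1]^d$: retain $D$ at generation $n$ in the $x$-percolation iff its parent is retained and $U_D \leq 2^{-\alpha_n(x)}$ where $\alpha_n(x) := \gamma - \varphi(x \restriction n)$. Then $\alpha_n(x) \to \gamma - \overline{\varphi}(x)$ for each $x$, so Lemma~\ref{l:H} (with $\beta$ slightly less than $\gamma$) supplies $\dim_H(K \cap \Gamma(x)) \geq \overline{\varphi}(x)$ with positive probability, while comparison with a constant-rate percolation at rate $(\gamma - \overline{\varphi}(x)) - \eps$ together with Theorem~\ref{t:H}~\eqref{e:H3} yields the matching upper bound almost surely. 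The shared $\{U_D\}$ also give a partial continuity: if $x_n \to x$ then for each fixed $m$ we have $x_n \restriction m = x \restriction m$ for all large $n$, which makes the early generations $S_k(x_n) = S_k(x)$ agree for $k \leq m$ and forces both $\Gamma(x_n)$ and $\Gamma(x)$ into the union $S_m(x)$ of cubes of side $2^{-m}$.

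The main obstacle is to transfer these $x$-by-$x$ probabilistic statements to one realisation $\omega^{*}$ realising both the dimension equality \emph{and} $d_H$-continuity uniformly in $x \in 2^\omega$; since $2^\omega$ is uncountable, intersecting almost sure events directly is not allowed. I would attack this in three steps: (i) restrict to the event that every surviving cube of $\Gamma(x)$ contains surviving descendants at every scale, which upgrades the partial agreement above to genuine $d_H$-continuity of $x \mapsto K \cap \Gamma(x, \omega^{*})$; (ii) a Fubini-type argument against a measure on $2^\omega$ whose pushforward by $\overline{\varphi}$ charges $A$ selects $\omega^{*}$ for which the dimension equality holds on a full-measure set of $x$; (iii) set $\Phi(x) := (K \cap \Gamma(x, \omega^{*})) \cup E$ for a fixed compact $E \subset K \cap B(y, r_0)$ of dimension $\min A$ (which exists since $K \cap B(y, r_0)$ has dimension $\gamma \geq \min A$), so that $\dim_H \Phi(x) = \max(\min A, \dim_H (K \cap \Gamma(x, \omega^{*}))) = \overline{\varphi}(x)$ for every $x$. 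The degenerate case $\min A = 0$ is reduced to the previous one by perturbing $\varphi$ to a strictly positive sequence converging to $0$, so that the corresponding $\Gamma(x)$ shrink $d_H$-continuously to a singleton of dimension $0$.
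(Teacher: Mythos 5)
Your forward direction matches the paper's. For the converse, you have correctly identified the paper's high-level architecture (Lemma~\ref{l:varphi existence}, a coupled fractal percolation driven by one family of i.i.d.\ uniforms, the difficulty that a.s.\ events cannot be intersected over the uncountable $2^\omega$), and you have correctly identified the place where the real work lives. But your proposed three-step resolution has genuine gaps at each step.

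First, step~(i) conditions on the event that every retained cube of the percolation has retained descendants at every scale. For any nontrivial retention probability this event has probability zero: at each generation every retained cube independently has a fixed positive probability of having no retained children, so almost surely the process has dead branches. The paper avoids conditioning altogether by \emph{deterministically} adjoining the countable sets $F^k_i$, consisting of pre-selected points $z_D\in D\cap K$ from each kept cube $D$ that has no kept child meeting $K$. These sets are countable, hence Hausdorff-dimension-irrelevant, but they guarantee that $C_x\cap S_n$ meets exactly the cubes of $\iC^k_n$ contained in $S_n$, which yields a deterministic modulus of continuity $d_H(C_x,C_y)\le 2^{-n}\diam Q_k$ whenever $x\restriction n=y\restriction n$.

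Second, step~(ii)'s Fubini argument delivers the dimension identity only for a full-measure set of $x\in 2^\omega$. That is insufficient: the conclusion needed is the exact spectral identity $\{\dim_H C:C\in\iC\}=A$, and a measure-zero exceptional set of $x$ could contribute spurious dimensions to the left-hand side. The paper instead establishes the a.s.\ dimension formula only along the \emph{countable} set $Q^*$ of eventually-constant rational parameter sequences (intersecting countably many a.s.\ events is legitimate), and then uses the pathwise monotonicity of the shared-randomness coupling, $\Gamma^*(\{\alpha_n\})\subset^\star\Gamma^*(\{\alpha'_n\})$ when $\alpha_n\ge\alpha'_n$, to squeeze $\dim_H(K\cap\Gamma^*(\alpha(x)))$ for every individual $x$ between two bracketing sequences from $Q^*$.

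Third, your lower bound on $\dim_H(K\cap\Gamma(x))$ rests on Lemma~\ref{l:H}, which only gives $\ge\beta-\alpha$ with positive probability $c$, not almost surely. The paper converts this to an a.s.\ statement by running $i_k$ independent copies inside each $Q_k$ with $(1-c_k)^{i_k}<1/2$, so that with probability $>1/2$ at least one copy succeeds at stage $k$, and then by independence across the stages $k$ one succeeds for infinitely many $k$ almost surely, giving the lower bound $\gamma-\alpha$ in the limit $\beta_k\uparrow\gamma$. Without some such amplification, the positive-probability statement cannot be used across all $x$ simultaneously. Finally, a small remark on step~(iii): unioning with a fixed set of dimension $\min A$ is not needed once the lower bound is made a.s., and it would not repair the defects above.
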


\begin{proof}  
First we prove $\eqref{ic1} \Rightarrow \eqref{ic2}$. Since $\dim_H \colon \iK(\R^d)\to [0,d]$ is Borel measurable by \cite[Theorem 2.1]{MM}, if $\{\dim_H C : C \in \mathcal{C}\} = A$ then $A$ must be analytic as the image of a compact set under a Borel map, see e.g.~\cite[Proposition~14.4]{Ke}. 	
	
Now we show $\eqref{ic2} \Rightarrow \eqref{ic1}$. Let $\gamma=\dim_H K$. By Fact~\ref{f:dimK} we can fix $z_0 \in K$ such that $\dim_H B(z_0,r)= \gamma$ for all $r>0$. We may assume that $A \subset(0, \gamma]$ is non-empty and analytic, since if $A=\emptyset$ then $\iC=\emptyset$ works, and if $0\in A$ and an appropriate family $\mathcal{C}$ for the analytic set $A \setminus \{0\}$ is constructed, then $\mathcal{C}\cup\{\{ z_0 \}\}$ works for $A$.  
	
Fix a sequence of positive numbers $\beta_k \uparrow \gamma$, and for all $k\geq 1$ let $Q_k$ be a cube around $z_0$ of side length $1/k$. Fix $k\geq 1$ arbitrarily, and let $c_k>0$ be the constant we obtain by applying Lemma~\ref{l:H} for $K\cap Q_k\subset Q_k$ and $\beta_k$. Choose $i_k\in \N^+$ large enough so that 
\begin{equation}
\label{e:choice of i_k}
(1-c_k)^{i_k} < \frac 12.
\end{equation}
We will run $i_k$ independent, parameterized families of percolations inside $Q_k$. For all $n\geq 0$ let $\iD_n^k$ denote the collection of dyadic subcubes of $Q_k$ with side length $(1/k)2^{-n}$ and let $\iD^k=\bigcup_{n=1}^{\infty} \iD_n^k$. For any $D\in \iD^k$ let $u^k_i(D)$ be a random variable uniformly distributed in $[0, 1]$ such that the family $\{u^k_i(D): k \ge 1, \, i \leq  i_k, \, D \in \iD^k\}$ is independent. Assume that a sequence $\{\alpha_n\}_{n\geq 1}$ is given such that $\alpha_n\in [0,\gamma)$ for all $n\geq 1$ and $\alpha_n \to \alpha \in [0,\gamma)$. We define $\Gamma_i^k(\{ \alpha_n \}_{n\geq 1})$ as follows. Let $S_0=Q_k$ and $\iD^k_0=\{Q_k\}$, and for each $1\leq i\leq i_k$ let 
\begin{align*} 
&\Delta_1=\Delta_{i,1}^k(\{\alpha_n\}_{n\geq 1})=\{D\in \iD_1^k: u_i^k(D)\leq 2^{-\alpha_1} \}, \\
&S_1=S_{i,1}^k(\{\alpha_n\}_{n\geq 1})=\bigcup \Delta_1.
\end{align*} 
If $\Delta_{m-1}$ and $S_{m-1}$ are already defined, let 
\begin{align} \label{a:Sm}
\begin{split}
&\Delta_{m}=\Delta_{i,m}^k(\{\alpha_n\}_{n\geq 1})=\{D\in \iD_m^k: D\subset S_{m-1} \text{ and } u_i^k(D)\leq 2^{-\alpha_m} \}, \\
&S_m=S_{i,m}^k(\{\alpha_n\}_{n\geq 1})=\bigcup \Delta_m.
\end{split}
\end{align} 
Finally, we define 
\begin{equation} \label{e:Gki} 
\Gamma^k_i(\{\alpha_n\}_{n \geq 1}) = \bigcap_{m=0}^{\infty} S_m.
\end{equation} 
Since for any $\delta < \alpha$ the percolation $\Gamma^k_i(\{\alpha_n\}_{n \geq 1})$ is stochastically dominated by the union of finitely many similar copies of $\Gamma(\delta)$, by Theorem~\ref{t:H}~\eqref{e:H3} for all $1\leq i\leq i_k$ we obtain
\begin{equation}
\label{e:percolation probability}
\P\left(\dim_H \left(K \cap \Gamma^k_i(\{\alpha_n\}_{n \geq 1})\right) 
\le \gamma- \alpha\right) = 1.
\end{equation}
Moreover, if $\alpha_n \in [0,\beta_k]$ for all $n\geq 1$, then Lemma~\ref{l:H}, \eqref{e:choice of i_k}, and the independence of the processes $\Gamma^k_i(\{\alpha_n\}_{n \geq 1})$ for $1\leq i\leq i_k$ imply 
\begin{equation}
\label{e:percolation probability 2}
\P\left(\dim_H \left(K \cap \left(\bigcup_{i=1 }^{i_k}\Gamma^k_i(\{\alpha_n\}_{n \geq 1})\right)\right) \ge \beta_k - \alpha \right) \geq \frac 12.
\end{equation}
For each $k \ge 1$ for any $D \in \mathcal{D}^k$ satisfying $D \cap K \neq \emptyset$ we choose a point $z_D \in D \cap K$. For all 
$n\geq 0$ let 
\begin{equation*} 
\iC^k_n=\{D\in \iD^k_n: D\cap K\neq \emptyset\},
\end{equation*}	
and define the countable random set $F^k_i=F^k_i(\{\alpha_n\}_{n \geq 1})$ as
\begin{equation} \label{e:Fki}
F^k_i= \bigcup_{n=0}^{\infty} \big\{z_D : D \in \mathcal{C}^k_n, \, D \subset S_n, \, \nexists C\in \iC^k_{n+1}  \text{ with } C\subset S_{n+1} \cap D \big\}.
\end{equation}
We now claim that 
\begin{equation}\label{e:set is compact}
F^k_i(\{\alpha_n\}_{n \geq 1}) \cup \Gamma^k_i(\{\alpha_n\}_{n \geq 1}) \text{ is compact for each $k \ge 1$ and $1\leq i \leq  i_k$}.
\end{equation} 
Indeed, $F^k_i(\{\alpha_n\}_{n \geq 1}) \setminus S_m$ is finite for all $m\geq 1$, hence $S_m^* = S_m \cup F^k_i(\{\alpha_n\}_{n \geq 1})$ is compact. Therefore $F^k_i(\{\alpha_n\}_{n \geq 1}) \cup \Gamma^k_i(\{\alpha_n\}_{n \geq 1})  = \bigcap_{m=1}^{\infty} S_m^*$ is compact as well, which completes the proof of \eqref{e:set is compact}. 
	
For all $1\leq i\leq i_k$ let 
\begin{equation} \label{e:gamma*} 
\Gamma^{*}_{k,i}(\{\alpha_n\}_{n \geq 1})=F^k_i(\{\alpha_n\}_{n \geq 1}) \cup \Gamma^k_i(\{\alpha_n\}_{n \geq 1}), 
\end{equation}
and define
\begin{equation} \label{e:Gammadef} 
\Gamma^*(\{\alpha_n\}_{n \geq 1}) = \{z_0 \} \cup \bigcup_{k=1}^{\infty} \bigcup_{i=1}^{i_k} \Gamma^{*}_{k,i}(\{\alpha_n\}_{n \geq 1}).
\end{equation} 
Using \eqref{e:set is compact} and the fact that $\Gamma^{*}_{k,i}(\{\alpha_n\}_{n \geq 1}) \subset Q_k$ and $Q_k\to \{z_0\}$, it is clear that $\Gamma^*(\{\alpha_n\}_{n \geq 1})$ is compact. As $\alpha<\gamma$ and $\alpha_n<\gamma$ for all $n\geq 1$,  we have $\sup \{\alpha_n: n\geq 1\}\leq \beta_k$ for all large enough $k$, so we can apply \eqref{e:percolation probability 2} for large values of $k$. Therefore  \eqref{e:percolation probability}, \eqref{e:percolation probability 2}, and the independence of the processes defining each $\Gamma^k_i(\{\alpha_n\}_{n \geq 1})$ yield
\begin{equation*}
\P\left(\dim_H \left(K \cap \Gamma^*(\{\alpha_n\}_{n \geq 1})\right) = \gamma - \alpha\right) = 1.
\end{equation*}
Our coupling of percolations clearly implies the following monotonicity: Almost surely, for all sequences $\{\alpha_n\}_{n \geq 1}$ and $\{ \alpha'_n\}_{n\geq 1}$ we have
	\begin{equation}
	\label{e:monotonicity}
	\Gamma^*(\{\alpha_n\}_{n \geq 1}) \subset^{\star} \Gamma^*(\{\alpha'_n\}_{n\geq 1}) \text{ if $\alpha_n \ge \alpha'_n$ for each $n$}.
	\end{equation}
Let $Q =\Q \cap [0, \gamma)$, and define the set 
\begin{equation*} 
Q^* = \{\{\alpha_n\}_{n \geq 1} : \alpha_n\in Q \text{ for all $n$ and $\alpha_n$ is eventually constant}\}.
\end{equation*}  
Clearly, $Q^*$ is countable. Therefore, almost surely we have 
\begin{equation}
\label{e:dimension prob for Q}
\dim_H (K \cap \Gamma^*(\{\alpha_n\}_{n \geq 1})) = \gamma - \alpha  \text{ for all $\{\alpha_n\}_{n \geq 1} \in Q^*$ with $\alpha_n \to \alpha$}.
\end{equation}
	
Now we are ready to define our family of compact sets $\mathcal{C}$. By Lemma~\ref{l:varphi existence} there exists a map $\varphi \colon 2^{<\omega} \to [0,\infty)$ such that $\overline{\varphi}(x) = \lim_{n \to \infty} \varphi(x \restriction n)$ exists for all $x \in 2^\omega$ and $\overline{\varphi}(2^\omega) = A$. Since $A \subset (0, \gamma]$, we may assume that 
\begin{equation} \label{e:0gamma}
0<\varphi(s)\leq \gamma  \text{ for all } s\in 2^{<\omega}:
\end{equation} 
Indeed, otherwise for all $s\in 2^n$ we can replace $\varphi(s)$ by $\gamma 2^{-n}$ if $\varphi(s)\leq 0$ and by $\gamma$ if $\varphi(s)>\gamma$, which modifications do not change the limit $\overline{\varphi}$. For each $x \in 2^\omega$ let $\alpha(x)=\{\alpha(x)_n\}_{n\geq 1}$ such that
\begin{equation*} 
\alpha(x)_n=\gamma-\varphi(x\restriction n) \text{ for all } n\geq 1. 
\end{equation*} 
Let us define $\mathcal{C}$ as 
\begin{equation*} 
\mathcal{C} = \{K \cap \Gamma^*(\alpha(x)) : x \in 2^\omega\}.
\end{equation*}
It is clear that $\mathcal{C}$ is a random family of compact sets. Now we prove that, almost surely, $\{\dim_H C : C \in \mathcal{C}\} = A$. Assume that the event of \eqref{e:dimension prob for Q} holds, it is enough to show that $\dim_H (K \cap \Gamma^*(\alpha(x))) = \overline{\varphi}(x)$ for all $x \in 2^\omega$. Let $x \in 2^\omega$ be fixed. The definition of $\varphi$ and \eqref{e:0gamma} imply that $\alpha(x)_n\in [0,\gamma)$ for all $n$, and $\alpha(x)_n$ converges to $\alpha_x\defeq \gamma-\overline{\varphi}(x)\in  [0,\gamma)$. Hence for any $\varepsilon > 0$ we can find sequences $\{\alpha'_n\}_{n \geq 1}, \{\alpha''_n\}_{n \geq 1} \in Q^*$ such that 
\begin{equation*} 
\alpha'_n \le \alpha(x)_n \le \alpha''_n \text{ and } \alpha''_n-\alpha'_n\leq \eps \text{ for each } n.
\end{equation*} 
Then $\alpha'_n\to \alpha'$ and $\alpha''_n\to \alpha''$ such that 
\begin{equation} \label{e:alpha'} 
\alpha'\leq \alpha_x\leq \alpha'' \text{ and } \alpha''-\alpha'\leq \eps.
\end{equation} 
By \eqref{e:dimension prob for Q} we have 
\begin{equation} \label{e:dimg}
\dim_H (K \cap \Gamma^*(\{\alpha'_n\}_{n \geq 1})) = \gamma - \alpha' \text{ and } \dim_H(K \cap \Gamma^*(\{\alpha''_n\}_{n \geq 1})) = \gamma - \alpha''.
\end{equation}
Monotonicity \eqref{e:monotonicity} yields 
\begin{equation} \label{e:mon}
\Gamma^{*} (\{\alpha''_n\}_{n\geq 1} )\subset^{\star} \Gamma^{*}(\alpha(x)) \subset^{\star} 
\Gamma^{*} (\{\alpha'_n\}_{n\geq 1}).
\end{equation}
As $\eps>0$ was arbitrary, \eqref{e:alpha'}, \eqref{e:dimg}, and \eqref{e:mon} imply that 
\begin{equation*}
\dim_H(K \cap \Gamma^*(\alpha(x))) =\gamma-\alpha_x=\overline{\varphi}(x),
\end{equation*}
which proves that $\{\dim_H C : C \in \mathcal{C}\} = A$ almost surely.

Finally, we check that $\mathcal{C}$ is compact with probability $1$. It is enough to show that the map $x\to K \cap \Gamma^*(\alpha(x))$ is continuous almost surely, then $\iC$ will be compact as a continuous image of the compact set $2^\omega$. Since $Q_k\to \{z_0\}$ and $\Gamma^{*}_{k,i}(\{\alpha_n\}_{n \geq 1}) \subset Q_k$ for all $k$ and $i$, by \eqref{e:Gammadef} it is enough to prove that for arbitrarily given $k\geq 1$ and $i\in \{1,\dots,i_k\}$ the map $x \mapsto C_x\defeq K\cap \Gamma_{k,i}^*(\alpha(x))$ is continuous. Assume that $x,y\in 2^{\omega}$ are given such that $x \restriction m= y \restriction m$ for some $m\geq 1$. It is enough to show 
\begin{equation} \label{e:dH} 
d_H(C_x, C_y)\leq 2^{-m}\diam Q_k.
\end{equation} 
By \eqref{a:Sm} we obtain that $S^k_{i,m}(\alpha(x))=S^k_{i,m}(\alpha(y))\defeq S_m$. 
The definition \eqref{e:Gki} yields $\Gamma^k_i(\alpha(x))\setminus S_m=\Gamma^k_i(\alpha(y))\setminus S_m=\emptyset$, and \eqref{e:Fki} implies 
$F^k_i(\alpha(x))\setminus S_m=F^k_i(\alpha(y))\setminus S_m$. Therefore \eqref{e:gamma*} yields $C_x \setminus S_m=C_y\setminus S_m$, and we obtain that     
\begin{equation} \label{e:dH1}
d_H(C_x, C_y)\le d_H(C_x\cap S_m, C_y\cap S_m).
\end{equation} 
Let $D\in \iD^k_m$ be arbitrary such that $D\subset S_m$. Definitions \eqref{e:Fki} and \eqref{e:gamma*} imply that $C_x\cap D\neq \emptyset$ iff $D\cap K\neq \emptyset$ iff $C_y\cap D\neq \emptyset$, so $\diam D=2^{-m}\diam Q_k$ yields 
\begin{equation} \label{e:dH2}
d_H(C_x\cap S_m, C_y\cap S_m)\leq 2^{-m}\diam Q_k.
\end{equation}
Then \eqref{e:dH1} and \eqref{e:dH2} imply \eqref{e:dH}, and the proof is complete. 
\end{proof}

\subsection{Box and packing dimensions} \label{ss:box}
In this subsection we prove Theorems~\ref{t:compbox} and \ref{t:comppack}. For the following equivalent version of the upper box dimension and for more alternative definitions see \cite[Chapter~3]{Fa}.

\begin{definition}
Let $(X,\rho)$ be a totally bounded metric space. We say that $S\subset X$ is a \emph{$\delta$-packing} if $\rho(x,y)>\delta$ for all distinct $x,y\in S$. Let $P_n(X)$ be the maximal cardinality of the $2^{-n}$-packings in $X$. 
\end{definition}

\begin{fact} \label{f:equiv} 
Let $X$ be a totally bounded metric space. Then for every $n\geq 0$ we have $N_n(X)\leq P_n(X)\leq N_{n+1}(X)$, so \begin{equation*}
\overline{\dim}_B \, X=\limsup_{n \to \infty} \frac{\log P_n(X)}{n\log 2}.
\end{equation*} 
\end{fact}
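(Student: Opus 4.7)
The plan is to establish the two inequalities $N_n(X)\le P_n(X)\le N_{n+1}(X)$ by standard packing--covering comparisons, and then to deduce the formula for $\overline{\dim}_B X$ by taking $\limsup$ through $\log(\cdot)/(n\log 2)$.

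For the lower bound $N_n(X)\le P_n(X)$, I would fix a maximal $2^{-n}$-packing $S\subset X$ (existence by Zorn's lemma; if no finite maximal packing exists then $P_n(X)=\infty$ and the inequality is trivial). By maximality, for every $y\in X$ there must exist some $x\in S$ with $\rho(x,y)\le 2^{-n}$, for otherwise $S\cup\{y\}$ would still be a $2^{-n}$-packing, contradicting maximality. Hence the closed balls of radius $2^{-n}$ centered at the points of $S$ cover $X$, which yields $N_n(X)\le |S|=P_n(X)$.

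For the upper bound $P_n(X)\le N_{n+1}(X)$, I would take any cover of $X$ by $N_{n+1}(X)$ closed balls of radius at most $2^{-(n+1)}$ and observe that, by the triangle inequality, any two points lying in a common such ball have distance at most $2\cdot 2^{-(n+1)}=2^{-n}$. Since any $2^{-n}$-packing requires distances strictly greater than $2^{-n}$, each of these balls contains at most one point of a given packing $S$, giving $|S|\le N_{n+1}(X)$ and hence $P_n(X)\le N_{n+1}(X)$.

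Combining these bounds gives
\begin{equation*}
\frac{\log N_n(X)}{n\log 2}\le \frac{\log P_n(X)}{n\log 2}\le \frac{\log N_{n+1}(X)}{n\log 2}=\frac{n+1}{n}\cdot \frac{\log N_{n+1}(X)}{(n+1)\log 2},
\end{equation*}
and since $(n+1)/n\to 1$, taking $\limsup_{n\to\infty}$ shows that both outer expressions have the same $\limsup$, namely $\overline{\dim}_B X$, and hence so does $\log P_n(X)/(n\log 2)$. There is essentially no hard step here; the only point requiring a slight bit of care is ensuring the equivalence of the ``maximal packing'' formulation with the original covering-based definition, i.e.\ that maximality forces the packing centers to also serve as a $2^{-n}$-net.
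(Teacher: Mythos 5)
Your proof is correct and is the standard packing--covering comparison; the paper itself states this as a fact without proof, simply citing Falconer's book, and the argument there is the same. The only point worth noting is that the paper's definition of $P_n(X)$ as ``the cardinality of a maximal $2^{-n}$-packing'' is implicitly the supremum over packings, but your handling of this (maximality forces a $2^{-n}$-net, and if no finite packing is maximal the inequality is trivial) covers it properly.
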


\begin{theorem} \label{t:compbox}
Let $K$ be a non-empty compact metric space and $A \subset [0, \overline{\dim}_B \, K]$. Then the following are equivalent: 
\begin{enumerate}
\item \label{i01} There is a compact set $\mathcal{C} \subset \mathcal{K}(K)$ with $\{\overline{\dim}_B \, C : C \in \mathcal{C}\} = A$;
\item \label{i02} $A$ is an analytic set. 
\end{enumerate} 
\end{theorem}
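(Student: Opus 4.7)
For the direction $(\ref{i01}) \Rightarrow (\ref{i02})$, my plan parallels the argument for $\dim_H$ in the proof of Theorem~\ref{t:compact family}: for each fixed $n$, the function $C \mapsto N_n(C)$ on $\mathcal{K}(K)$ is Borel because $\{C : N_n(C) \leq k\}$ is the projection onto $\mathcal{K}(K)$ of a closed subset of $K^k \times \mathcal{K}(K)$. Consequently $\overline{\dim}_B = \limsup_n \log N_n / (n \log 2)$ is Borel on $\mathcal{K}(K)$, so if $\mathcal{C} \subset \mathcal{K}(K)$ is compact then $A = \overline{\dim}_B(\mathcal{C})$ is analytic by \cite[Proposition~14.4]{Ke}.

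For $(\ref{i02}) \Rightarrow (\ref{i01})$, the plan is to construct a continuous map $\Psi \colon 2^\omega \to \mathcal{K}(K)$ with $\overline{\dim}_B \Psi(x) = \overline{\varphi}(x)$, where $\overline{\varphi}(2^\omega) = A$ is supplied by Lemma~\ref{l:varphi existence}; then $\mathcal{C} \defeq \Psi(2^\omega)$ does the job. After handling $0 \in A$ by appending a singleton to the family, assume $A \subset (0, \gamma]$ with $\gamma = \overline{\dim}_B K$. I fix $y_0 \in K$ via Fact~\ref{f:dimK} so that $\overline{\dim}_B(K \cap B(y_0, r)) = \gamma$ for every $r > 0$, and then select inductively, for each $k \geq 1$, integers $n_k \to \infty$, radii $r_k \to 0$, and $2^{-n_k}$-packings $P_k \subset K \cap B(y_0, r_k)$ of size at least $2^{(\gamma - 1/k) n_k}$, with $n_k$ and $r_k$ chosen large enough (via Fact~\ref{f:equiv} and the defining limsup of $\overline{\dim}_B$) that the scale estimates below will be satisfied. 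For $x \in 2^\omega$, set $m_k(x) = \lceil 2^{\min(\varphi(x\restriction k), \gamma - 1/k) n_k} \rceil$, take $Q_k(x) \subset P_k$ to be the first $m_k(x)$ points under a fixed enumeration of $P_k$, and define $\Psi(x) = \{y_0\} \cup \bigcup_{k \geq 1} Q_k(x)$. Since $Q_k(x)$ depends only on $x \restriction k$ and $Q_k(x) \subset B(y_0, r_k)$ with $r_k \to 0$, the set $\Psi(x)$ is compact and $\Psi$ is continuous in the Hausdorff metric, so $\mathcal{C} = \Psi(2^\omega)$ is compact.

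The lower bound $\overline{\dim}_B \Psi(x) \geq \overline{\varphi}(x)$ is immediate from $N_{n_k}(\Psi(x)) \geq m_k(x)$ together with $\varphi(x \restriction k) \to \overline{\varphi}(x)$. The main obstacle is the matching upper bound, which must be verified at every scale $n$, not just along the subsequence $\{n_k\}$. The plan is to decompose $N_n(\Psi(x)) \leq 1 + \sum_j N_n(Q_j(x))$ into three regimes: the tail indices with $r_j < 2^{-n}$, whose $Q_j$'s jointly contribute $1$ since they all lie in a single $2^{-n}$-ball around $y_0$; the fully resolved indices with $n_j \leq n$, each contributing $m_j(x)$ and summing to $O(2^{(\overline{\varphi}(x)+\varepsilon)n})$ once $\varphi(x\restriction j)$ is $\varepsilon$-close to $\overline{\varphi}(x)$ and the $n_j$'s grow fast enough that the sum is essentially geometric; and the delicate intermediate indices with $2^{-n_j} < 2^{-n} \leq r_j$, for which only the two crude bounds $N_n(Q_j(x)) \leq m_j(x)$ and $N_n(Q_j(x)) \leq N_n(K \cap B(y_0, r_j))$ are available and neither is individually small enough without local regularity of $K$. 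The crux is to calibrate the inductive choices of $n_k$ and $r_k$ so that only boundedly many indices are intermediate at any given scale $n$ and, for each such $j$, the size bound $m_j(x) \leq 2^{(\overline{\varphi}(x) + \varepsilon) n_j}$ combined with the local covering estimate $N_n(K \cap B(y_0, r_j)) \leq 2^{(\gamma + 1/k) n}$ for $n \geq n_k$---which can be enforced at the $k$-th induction step, since $\overline{\dim}_B(K \cap B(y_0, r_j)) = \gamma$---is tight enough to keep the total below $2^{(\overline{\varphi}(x) + \varepsilon) n}$ for all large $n$; making this balance succeed is the technical heart of the argument.
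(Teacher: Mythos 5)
Your $(\ref{i01}) \Rightarrow (\ref{i02})$ direction is correct and essentially the paper's: the sets $\{C : N_n(C)\leq k\}$ are closed (as projections of compact subsets of $K^k\times\iK(K)$), so $\overline{\dim}_B$ is Borel on $\iK(K)$, and analyticity of $A$ follows from \cite[Proposition~14.4]{Ke}.

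For $(\ref{i02}) \Rightarrow (\ref{i01})$ the skeleton of your construction matches the paper's---a countable set accumulating at a point $y_0$ chosen via Fact~\ref{f:dimK}, built from packings at a sequence of scales and parametrized through Lemma~\ref{l:varphi existence}---but the gap you flag at the intermediate scales is genuine, and your construction as written cannot deliver the matching upper bound. The trouble is that you commit to the packing scale $n_k$ and the packing $P_k\subset B(y_0,r_k)$ \emph{before} $\varphi(x\restriction k)$ enters, and then merely truncate $P_k$ to its first $m_k(x)$ points under a pre-chosen enumeration. At an intermediate scale $n$ with $\log_2(1/r_j)<n<n_j$ there is then no usable estimate on $N_n(Q_j(x))$: the bound $m_j(x)\approx 2^{\varphi(x\restriction j)n_j}$ is calibrated to the finer scale $n_j$, the bound $N_n(K\cap B(y_0,r_j))$ carries exponent close to $\gamma$, and for a general compact $K$ nothing prevents the truncated packing from realizing roughly $\min(m_j(x),P_n(B(y_0,r_j)))$ at such $n$. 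The crucial idea in the paper's proof, absent from your proposal, is to make the scale $x$-dependent and chosen \emph{minimally}: at step $n+1$ it takes $\ell(t)$, $t=x\restriction(n+1)$, to be the least integer in $[g(k_n),k_{n+1}-3]$ with $P_{\ell(t)}(B(y_0,2^{-g(k_n)}))\geq 2^{\varphi(t)\ell(t)}$. Minimality is precisely what tames the intermediate regime: for $g(k_n)\leq \ell<\ell(t)$ it forces $P_\ell(B(y_0,2^{-g(k_n)}))<2^{\varphi(t)\ell}$, so the new packing, being a subset of that ball, automatically satisfies $N_\ell\leq 2^{\varphi(t)\ell}$; combined with the controlled count $m(x\restriction n)\leq g(k_n)\leq\ell$ of the ``old'' points, one gets $N_\ell(C(x))\leq \ell+2^{\varphi(t)\ell}\leq 2^{\gamma\ell}$ on the whole range. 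Since $\ell(t)$ depends only on $x\restriction(n+1)$, this fix does not disturb continuity of $x\mapsto C(x)$. To repair your argument you would need to replace the pre-committed sequence $\{n_k\}$ by such an $x$-dependent, minimally chosen scale; calibrating $n_k$ and $r_k$ alone is not enough.
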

\begin{proof} First we show $\eqref{i01} \Rightarrow \eqref{i02}$. As $\overline{\dim}_B \, \colon \iK(\R^d)\to [0,d]$ is Borel measurable by \cite[Lemma~3.1]{MM}, if $\{ \overline{\dim}_B \, C : C \in \mathcal{C}\} = A$ then $A$ must be analytic as it is the image of a compact set under a Borel map, see e.g.~\cite[Proposition~14.4]{Ke}. 
	
Now we prove $\eqref{i02} \Rightarrow \eqref{i01}$. We may assume that $A\neq \emptyset$, otherwise $\iC=\emptyset$ works. If $\overline{\dim}_B \, K=0$ then $A=\{z\}$ works for any $z\in K$, so we may suppose $\overline{\dim}_B \, K>0$. Let $\alpha=\overline{\dim}_B \, K$ and choose a sequence $\alpha_n \uparrow \alpha$. By Fact~\ref{f:dimK} we can fix $z_0\in K$ such that 
\begin{equation*}
	\overline{\dim}_B \, B(z_0,r)=\alpha \text{ for all } r>0.
	\end{equation*}
Choose a sequence $k_n\uparrow \infty$ with $k_0=0$ such that for all $n\geq 1$ we have
\begin{equation} \label{e:alpha}  P_{k_{n}}(B(z_0,2^{-k_{n-1}}))\geq 2^{\alpha_{n} k_{n}}. 
\end{equation}
By Lemma~\ref{l:varphi existence} there is a map $\varphi \colon 2^{<\omega} \to [0, \infty)$ such that
\begin{equation*} 
\overline{\varphi}(x) = \lim_{n \to \infty} \varphi(x \restriction n) 
\end{equation*} 
exists for each $x \in 2^\omega$, and $\overline{\varphi}(2^\omega) = A$. We may assume that $\varphi(s)\leq \alpha_n$ for all $n\geq 0$ and $s\in 2^n$, otherwise we can replace $\varphi(s)$ by $\alpha_n$ without changing $\overline{\varphi}$. Let $C(\emptyset)=\{z_0\}$, and fix $n\geq 1$ and $s\in 2^n$ arbitrarily. We will define a finite set $C(s)$ close to $z_0$. Let $\ell(s)$ be the minimal integer such that $k_{n-1} \leq \ell(s) \leq k_{n}$ and  
\begin{equation} \label{e:ells}
P_{\ell(s)}(B(z_0,2^{-k_{n-1}}))\geq 2^{\varphi(s)\ell(s)},
\end{equation} 
by \eqref{e:alpha} the number $\ell(s)$ is well-defined. Choose a $2^{-\ell(s)}$-packing $C(s)$ of size $\lfloor 2^{\varphi(s)\ell(s)} \rfloor$ in $B(z_0,2^{-k_{n-1}})$. For all $x\in 2^{\omega}$ set
\begin{equation*} 
C(x)=\bigcup_{n=0}^{\infty} C(x\restriction n).
\end{equation*}
Clearly, $C(x)$ is a countable compact set with the only possible accumulation point $z_0$. If $x\restriction n=y\restriction n$, then $C(x)\setminus B(z_0,2^{-k_n})=C(y)\setminus B(z_0,2^{-k_n})$. Hence
\begin{equation*} 	
d_H(C(x),C(y))\leq 2^{-k_n},
\end{equation*} 
so the map $x\mapsto C(x)$ is continuous. Define
\begin{equation*} \iC=\{C(x): x\in 2^{\omega}\},
\end{equation*}
then $\iC\subset \iK(K)$ is compact as a continuous image of the compact set $2^{\omega}$. 

In order to prove $\{\overline{\dim}_B \, C: C\in \iC\}=A$, let $x\in 2^{\omega}$ be arbitrarily fixed. It is enough to show that 
\begin{equation} \label{e:BC}
\overline{\dim}_B \, C(x)=\overline{\varphi}(x).
\end{equation} 
Clearly, $C(x)$ contains the $2^{-\ell(x\restriction n)}$-packing $C(x\restriction n)$ of size $\lfloor 2^{\varphi(x\restriction n) \ell(x\restriction n)}\rfloor$ for all $n$, so Fact~\ref{f:equiv} yields that 
\begin{equation*} 	
\overline{\dim}_B \, C(x)\geq \limsup_{n\to \infty} \varphi(x\restriction n)= \overline{\varphi}(x).
\end{equation*}	
Let $\gamma>\overline{\varphi}(x)$ be arbitrary, in order to prove \eqref{e:BC} we only need to show that
\begin{equation} \label{e:upper}
\overline{\dim}_B \, C(x)\leq \gamma.
\end{equation}
For all large enough $n$ the construction, $\ell(x\restriction i) \leq k_i$, and elementary calculation yield
 \begin{equation} \label{e:sharp}
 \#\left(\bigcup_{i=0}^{n-1} C(x\restriction i)\right)\leq \sum_{i=0}^{n-1} 2^{\varphi(x\restriction i) k_i}\leq 2^{\gamma k_{n-1}}.
 \end{equation} 
For all $n$ and $\ell\in \{k_{n-1},\dots,k_n\}$ we claim
\begin{equation} \label{e:Nl} 
N_{\ell}(C(x\restriction n))\leq P_{\ell}(C(x\restriction n))\leq  2^{\varphi(x\restriction n)\ell}.
\end{equation}
The first inequality follows from Fact~\ref{f:equiv}. If $\ell<\ell(x\restriction n)$, then the second inequality follows from the minimality of $\ell(x\restriction n)$, see \eqref{e:ells}. If $\ell\geq \ell(x\restriction n)$, then \begin{equation*} 
P_{\ell}(C(x\restriction n))\leq \#C(x\restriction n)\leq 2^{\varphi(x\restriction n)\ell(x\restriction n)}\leq 2^{\varphi(x\restriction n)\ell}, 
\end{equation*}
so \eqref{e:Nl} holds. 

Now, inequalities \eqref{e:sharp}, \eqref{e:Nl}, and $\bigcup_{i=n+1}^{\infty} C(x\restriction i)\subset B(z_0,2^{-k_n})$ yield that for all large enough $n$ and for each $\ell\in \{k_{n-1},\dots,k_n\}$ 
we have
\begin{equation*}
N_{\ell}(C(x)) \leq 2^{\gamma k_{n-1}}+2^{\varphi(x\restriction n)\ell}+1\leq 2^{\gamma \ell+1}, 
	\end{equation*} 
so \eqref{e:upper} holds. The proof of the theorem is complete.
\end{proof}

For the packing dimension we prove the following theorem.

\begin{theorem} \label{t:comppack}
	Let $K$ be a non-empty compact metric space and let $A \subset [0, \dim_P K]$ be analytic. Then there is a compact set $\mathcal{C} \subset \mathcal{K}(K)$ with $\{\dim_P C : C \in \mathcal{C}\} = A$.
\end{theorem}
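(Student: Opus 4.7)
The plan is to imitate the construction of Theorem~\ref{t:compbox} with two modifications that together upgrade $\overline{\dim}_B$ to $\dim_P$. First, I would preprocess $K$ by applying Lemma~\ref{l:packing}\eqref{i:ii} with some $\alpha \in (\sup A, \dim_P K)$ to obtain a closed subset $F \subset K$ satisfying $\dim_P(F \cap U) > \alpha$, and hence $\overline{\dim}_B(F \cap U) > \alpha$, for every open $U$ meeting $F$. All centers $y_i(s)$ of the tree would be chosen inside $F$, which ensures that $F \cap B(y_i(s), 2^{-g(k_n)})$ still admits a $2^{-\ell(t)-1}$-packing of size $\lfloor 2^{\varphi(t)\ell(t)}\rfloor$ for the $\varphi$ produced by Lemma~\ref{l:varphi existence} (truncated below $\alpha$). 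Second, unlike in Theorem~\ref{t:compbox}, I would insert such a packing $T_i$ around \emph{every} existing center $y_i(s)$ rather than only around a single anchor point, and take $\bigcup_i T_i$ as the set of level-$(n+1)$ centers. The definition $C(x) = \bigcap_n C(x \restriction n)$, the continuity bound $d_H(C(x), C(y)) \leq 2^{1-k_n}$ when $x \restriction n = y \restriction n$, and the compactness of $\iC = \{C(x) : x \in 2^\omega\}$ then carry over verbatim.

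The upper bound $\dim_P C(x) \leq \overline{\varphi}(x)$ reduces via Fact~\ref{f:ineq} to $\overline{\dim}_B C(x) \leq \overline{\varphi}(x)$, and this follows from essentially the same box-counting estimate as in Theorem~\ref{t:compbox}: the new multiplicative factor $m(x \restriction n)$ arising from having packings around multiple centers is absorbed because the centers at level $n$ form a $2^{-k_n}$-packing of $F$, giving $m(x \restriction n) \leq g(k_n) \leq \ell$, while the per-ball bound $P_\ell(C(x) \cap B(y_i(x \restriction n), 2^{-g(k_n)})) \leq 2^{\varphi(x \restriction (n+1))\ell}$ still follows from the minimality of $\ell(x \restriction (n+1))$. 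For the lower bound I would use Lemma~\ref{l:packing}\eqref{i:i}: the centers $y_i(s)$ are dense in $C(x)$, so any open $U$ meeting $C(x)$ contains some $y_i(x \restriction n)$ for arbitrarily large $n$, and the level-$(n+1)$ packing $T_i$ added around this center lies inside $C(x) \cap U$ because centers persist at every later level; by Fact~\ref{f:equiv} this forces $\overline{\dim}_B(C(x) \cap U) \geq \limsup_n \varphi(x \restriction (n+1)) = \overline{\varphi}(x)$, and hence $\dim_P C(x) \geq \overline{\varphi}(x)$. Combining the bounds with $\overline{\varphi}(2^\omega) = A$ yields $\{\dim_P C : C \in \iC\} = A$.

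The main obstacle I anticipate is the boundary case $\sup A = \dim_P K$, where Lemma~\ref{l:packing}\eqref{i:ii} cannot be applied with $\alpha = \sup A$. I would handle it by a diagonal device: fix $y_0 \in K$ via Fact~\ref{f:dimK} with $\dim_P(K \cap B(y_0, r)) = \dim_P K$ for every $r > 0$, pick $a_n \uparrow \sup A$ with $a_n < \dim_P K$, and apply the construction above inside $K \cap B(y_0, 1/n)$ (which has packing dimension $\dim_P K > a_n$) to the truncated analytic set $A \cap [0, a_n]$, obtaining compact families $\iC_n \subset \iK(K \cap B(y_0, 1/n))$ realizing $A \cap [0, a_n]$. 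Setting $\iC = \bigcup_n \iC_n$, augmented by $K$ itself when $\sup A \in A$, gives a candidate family; compactness follows from $\diam C \leq 2/n \to 0$ for $C \in \iC_n$, and some care is needed to reconcile the forced accumulation point $\{y_0\}$ with $A$: if $0 \in A$ it is absorbed directly, and otherwise one arranges each $\iC_n$ to realize its minimum dimension by a set that contains $y_0$, so that the accumulation point already belongs to $\iC$.
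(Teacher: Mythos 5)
Your treatment of the interior case $\sup A < \dim_P K$ matches the paper's argument: shrink $K$ via Lemma~\ref{l:packing}\eqref{i:ii} so that every relatively open subset has upper box dimension exceeding $\sup A$, grow the tree by inserting a $\varphi(t)$-sized packing around \emph{every} level-$n$ center and taking the union as the new centers, prove the lower bound via Lemma~\ref{l:packing}\eqref{i:i}, and absorb the multiplicative factor $m(x\restriction n)\le g(k_n)\le\ell$ in the box-counting upper bound. All of this is sound and is essentially what the paper does.

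The genuine gap is in the boundary case $\sup A=\dim_P K$. You correctly observe that $\iC=\bigcup_n\iC_n$ with $\iC_n\subset\iK(B(y_0,1/n))$ forces $\{y_0\}$ to be an accumulation point (any diagonal sequence $C_n\in\iC_n$ converges to $\{y_0\}$), but your proposed repair does not remove it. Arranging each $\iC_n$ to realize its minimum dimension by a set $D_n$ \emph{containing} $y_0$ changes nothing: the sets $D_n$ still shrink to $\{y_0\}$ in the Hausdorff metric, and $\{y_0\}$ is still not an element of any $\iC_n$. Closing up by adding $\{y_0\}$ to $\iC$ restores compactness but injects a set of packing dimension $0$ into the family, which is illegal whenever $0\notin A$; since the general $A$ need not contain $0$, the family you describe is either not compact or does not realize exactly $A$. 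The paper's resolution is structurally different and does work: fix some $\beta_0\in A\cap[0,\dim_P K)$, use the interior case to produce a single compact $K_0\subset K$ with $\dim_P K_0=\beta_0$ and $y_0\in K_0$, and replace each $C\in\iC_n$ for $n\ge 1$ by $K_0\cup C$ (keeping $\iC_0$ as is, and adding $K$ only if $\dim_P K\in A$). Since $\dim_P(K_0\cup C)=\max(\beta_0,\dim_P C)\in A$, the realized dimension set is unchanged, while the forced accumulation point of the modified family is $K_0$ rather than $\{y_0\}$, and $\dim_P K_0=\beta_0$ already lies in $A$. (One should also place $K_0$ itself in $\iC$ to ensure the accumulation point actually belongs to the family.) Your version would need to be amended along these lines.
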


\begin{proof}  First suppose that $A\subset [0,\alpha]$ for some $\alpha<\dim_P K$. We may assume $A\neq \emptyset$, otherwise $\iC=\emptyset$ works. By  Lemma~\ref{l:packing}~\eqref{i:ii} we may suppose by shrinking $K$ if necessary that $\overline{\dim}_B \,  U\geq \dim_P U>\alpha$ for any non-empty open set $U\subset K$. By Lemma~\ref{l:varphi existence} there exists a map $\varphi \colon 2^{<\omega} \to [0, \infty)$ such that
\begin{equation*} \overline{\varphi}(x) = \lim_{n \to \infty} \varphi(x \restriction n)
\end{equation*} 
exists for each $x \in 2^\omega$, and $\overline{\varphi}(2^\omega) = A$. We may assume that $\varphi(s)\leq \alpha$ for all $s\in 2^{<\omega}$, otherwise we can replace $\varphi(s)$ by $\alpha$ without changing $\overline{\varphi}$. Let $c_n=2^{-n}$ for all $n\geq 1$. By compactness and Fact~\ref{f:equiv} we can choose a sequence $k_n\uparrow \infty$ such that $k_0=0$ and for all $z\in K$ and $n\geq 0$ there exists a positive integer $\ell=\ell(n,z)$ such that $k_{n}\leq \ell \leq c_{n+1} k_{n+1}\leq k_{n+1}-2$ and
\begin{equation}  \label{e:Pj} 
P_{\ell}(B(z,2^{-k_{n}-1}))\geq 2^{\alpha \ell}. 
\end{equation}
Let $m(\emptyset)=1$, $z_1(\emptyset)\in K$ be arbitrary, and $C(\emptyset)=B(z_1(\emptyset),1)$. Assume that $s\in 2^{n}$ for some $n\geq 0$, a positive integer $m(s)$, points $z_i(s)\in K$ for $1 \le i \le m(s)$ are given such that the distance of $B(z_i(s), 2^{-k_n})$ and $B(z_{i'}(s), 2^{-k_n})$ is greater than $2^{-k_n}$ for all $i\neq i'$, and  
\begin{equation} 
\label{e:C(s) def}
C(s)=\bigcup_{i=1}^{m(s)} B(z_i(s), 2^{-k_n}).
\end{equation}  Let $c\in \{0,1\}$ and $t=s^\frown c$. For all $1\leq i\leq m(s)$ let $\ell_i(t)$ be the minimal integer such that $k_n\leq \ell_i(t) \leq c_{n+1} k_{n+1}$ and 
\begin{equation} \label{e:PL}
P_{\ell_i(t)}(B(z_i(s),2^{-k_n-1}))\geq 2^{\varphi(t)\ell_i(t)},
\end{equation} 
by \eqref{e:Pj} the numbers $\ell_i(t)$ are well-defined.
For every $1\leq i\leq m(s)$ let
\begin{equation}
\label{e:S_i(t) def}
\text{$S_i(t)$ be a $2^{-\ell_i(t)}$-packing of size $\lfloor 2^{\varphi(t)\ell_i(t)} \rfloor$ in $B(z_i(s),2^{-k_n-1})$}.
\end{equation}
Let $S(t)=\bigcup_{i=1}^{m(s)} S_i(t)$ and $m(t)=\#S(t)$, and let $z_j(t)\in K$ be the points for which $S(t)=\{z_j(t)\}_{1\leq j\leq m(t)}$. Define
\begin{equation} \label{e:ct} C(t)=\bigcup_{j=1}^{m(t)} B(z_j(t), 2^{-k_{n+1}}).
\end{equation}
Since $\varphi(t)\leq \alpha$ and $\ell_i(t)\leq c_{n+1} k_{n+1}$ for all $1\leq i\leq m(s)$, we have
\begin{equation} \label{e:mdef} 
m(t)\leq 2^{\alpha c_{n+1}k_{n+1}}m(s).
\end{equation} 
As $k_n\leq k_{n+1}-1$, we have
\begin{equation}
\label{e:ball is subset of bigger ball}
B(z,2^{-k_{n+1}})\subset B(z_i(s),2^{-k_n}) \text{ for all } 1\leq i\leq m(s) \text{ and } z\in S_i(t),
\end{equation}
and we obtain that $C(t)\subset C(s)$. For all $i\in \{1,\dots,m(s)\}$ let 
\begin{equation*} d_i(t)=\min\left\{\dist\left(B(z, 2^{-k_{n+1}}), B(z', 2^{-k_{n+1}})\right) : z, z' \in S_i(t),\, z \neq z'\right\},
\end{equation*}
where $\dist$ denotes the distance. As $\ell_i(t)\leq k_{n+1}-2$, for all $1\leq i\leq m(s)$ we obtain
\begin{equation} \label{e:dit}
d_i(t)>2^{-\ell_i(t)}-2^{-k_{n+1}+1}\geq 2^{-\ell_i(t)-1}>2^{-k_{n+1}}.    
\end{equation} 
In particular, inequality \eqref{e:dit} and the inductive hypothesis imply that the distance of the balls $B(z_j(t),2^{-k_{n+1}})$ and $B(z_{j'}(t),2^{-k_{n+1}})$ is greater than $2^{-k_{n+1}}$ for every $1\leq j<j'\leq m(t)$. Thus we defined $C(s)$ for all $s\in 2^{<\omega}$. For $x\in 2^{\omega}$ let
\begin{equation}
\label{e:C(x) def}
C(x)=\bigcap_{n=1}^{\infty} C(x\restriction n).
\end{equation}
Clearly, $C(x)$ is closed, hence compact. If $x\restriction n=y\restriction n$, then $C(x)$ and $C(y)$ are covered by the same balls of radius $2^{-k_n}$ which they both intersect, so the map $x\mapsto C(x)$ is continuous. Define 
\begin{equation*} 
\iC=\{C(x): x\in 2^{\omega}\}, 
\end{equation*}
then $\iC\subset \iK(K)$ is compact as a continuous image of the compact set $2^{\omega}$. 

In order to prove that $\{\dim_P C: C\in \iC\}=A$, let $x\in 2^{\omega}$ be arbitrarily fixed, it is enough to show that
\begin{equation} \label{e:PC}
\dim_P C(x)=\overline{\varphi}(x).
\end{equation} 
First we prove that $\dim_P C(x)\geq \overline{\varphi}(x)$.
Let $U\subset K$ be an arbitrary open set intersecting $C(x)$. Fix any large enough $n$ and $i\in \{1,\dots, m(x\restriction n)\}$ such that $B(z_i(x\restriction n),2^{-k_n})\subset U$. Let $s=x\restriction n$ and $t=x\restriction (n+1)$, then \eqref{e:ball is subset of bigger ball} and \eqref{e:S_i(t) def} imply that $B(z_i(s),2^{-k_n})$ contains $\lfloor 2^{\varphi(t)\ell_i(t)} \rfloor$ balls of the form $B(z,2^{-k_{n+1}})$, $z \in S_i(t)$. These balls are of pairwise distance greater than $2^{-\ell_i(t)-1}$ using \eqref{e:dit}, and all of them intersect $C(x)$ by the construction. Therefore
\begin{equation*} P_{\ell_i(t)+1}(C(x)\cap U)\geq \lfloor 2^{\varphi(t)\ell_i(t)}\rfloor.
\end{equation*} 
Hence $\overline{\dim}_B \, (C(x)\cap U)\geq \overline{\varphi}(x)$ by Fact~\ref{f:equiv}. As $U\subset K$ was an arbitrary open set,  Lemma~\ref{l:packing}~\eqref{i:i} implies $\dim_P C(x)\geq \overline{\varphi}(x)$. Therefore, in order to show \eqref{e:PC} it is enough to prove that
\begin{equation} \label{e:PP}
\overline{\dim}_B \, C(x)\leq \overline{\varphi}(x).
\end{equation}
Inequality \eqref{e:mdef} yields for all $n$ that 
\begin{equation} \label{e:mprod} 
m(x\restriction n)\leq \prod_{i=1}^n 2^{\alpha c_i k_i}=2^{o_n(1)k_n}, 
\end{equation}
where $o_n(1)\to 0$ easily follows from $\sum_{i=1}^{\infty} c_i<\infty$ and $k_n \uparrow \infty$.

We claim that for all $n\geq 0$, $i\in \{1,\dots,m(x\restriction n)\}$ and $\ell\in \{k_n,\dots,k_{n+1}\}$ for the ball $ B\defeq B(z_i(x\restriction n),2^{-k_n})$ we have
\begin{equation} \label{e:ncx}
N_{\ell}(C(x)\cap B)\leq 2^{\varphi(x\restriction (n+1))(\ell+1)}.
\end{equation}
Fix $n,i$ and let $s = x \restriction n $ and $t = x \restriction (n+1)$. Then \eqref{e:ct} yields
\begin{equation} \label{e:cxb} C(x)\cap  B=C(x)\cap \left(\bigcup_{z\in S_i(t)} B(z,2^{-k_{n+1}})\right).
\end{equation}
First assume that $\ell_i(t)-1\leq \ell\leq k_{n+1}$. Then \eqref{e:cxb} and \eqref{e:S_i(t) def} imply that 
\begin{equation*} N_{\ell}(C(x)\cap B)\leq \#S_i(t)\leq 2^{\varphi(t)(\ell+1)},
\end{equation*} 
so \eqref{e:ncx} holds. Now suppose $k_n\leq \ell<\ell_i(t)-1$ and define $B'=B(z_i(s),2^{-k_n-1})$. We claim that $P_{\ell}(C(x)\cap B)\leq P_{\ell+1}(B')$.
 Let $Z$ be any $2^{-\ell}$-packing in $C(x)\cap B$. Define $Z'\subset B'$ by simply keeping $Z\cap B'$ and replacing each $z\in Z\setminus B'$ with the unique $z'\in S_i(t)$ for which $z\in B(z',2^{-k_{n+1}})$. As $B(z,2^{-k_{n+1}})\cap B(z',2^{-k_{n+1}})=\emptyset$ for distinct points $z,z'\in S_i(t)$ and \eqref{e:cxb} holds, we obtain that $Z'$ is well-defined with $\#Z'=\#Z$, and $Z'\subset B'$ follows from \eqref{e:S_i(t) def}. The definition of $Z'$ and $\ell\leq k_{n+1}-2$ yield that any distance in $Z'$ is greater than $2^{-\ell}-2\cdot 2^{-k_{n+1}}\geq 2^{-(\ell+1)}$, so $Z'$ is a $2^{-(\ell+1)}$-packing in $B'$. Hence $P_{\ell}(C(x)\cap B)\leq P_{\ell+1}(B')$. Then Fact~\ref{f:equiv} and the minimality of $\ell_i(t)$ in \eqref{e:PL} with $\ell+1\leq \ell_i(t)$ imply that
\begin{equation*} 
N_{\ell}(C(x)\cap B)\leq P_{\ell}(C(x)\cap B)\leq P_{\ell+1}(B')\leq 2^{\varphi(t)(\ell+1)}, 
\end{equation*}
which proves \eqref{e:ncx}. 

We obtain $C(x)=\bigcup_{i=1}^{m(x\restriction n)} \left(C(x)\cap B\left(z_i(x \restriction n), 2^{-k_n}\right)\right)$ by \eqref{e:C(s) def} and \eqref{e:C(x) def}.
Thus the subadditivity of $N_{\ell}(\cdot)$, \eqref{e:ncx}, and \eqref{e:mprod} yield that for all $n\geq 0$ and $\ell\in \{k_n,\dots,k_{n+1}\}$ we have
\begin{align*} 
N_{\ell}(C(x))&\leq \sum_{i=1}^{m(x\restriction n)} N_{\ell}\left(C(x)\cap B\left(z_i(x \restriction n), 2^{-k_n}\right)\right) \\
&\leq m(x\restriction n) 2^{\varphi(x\restriction (n+1))(\ell+1)} \leq 2^{(\overline{\varphi}(x)+o_n(1))\ell}
\end{align*} 
with some $o_n(1)\to 0$, so \eqref{e:PP} holds. Therefore, the proof is complete if $A\subset [0,\alpha]$ for some $\alpha<\dim_P K$. 

Finally, we prove the general case. We may assume that $A\subset [0,\dim_P K)$ is non-empty, since if $A=\emptyset$ then $\iC=\emptyset$ works, and if $\dim_P K\in A$ and an appropriate family $\mathcal{C}$ for $A \setminus \{\dim_P K\}$ is constructed, then $\mathcal{C}\cup\{K\}$ works for $A$. Take a sequence $\alpha_n\uparrow \dim_P K$ with $\alpha_0\in A$, and define $A_n=A\cap [0,\alpha_n]$ for all $n\geq 0$. By Fact~\ref{f:dimK} we can choose $z_0\in K$ such that $\dim_P B(z_0,r)=\dim_P K$ for all $r>0$. By the first part of the proof we can choose a compact set $K_0\subset K$ with $\dim_P K_0=\alpha_0$, and we can pick compact families $\iC_n\subset \iK(B(z_0,2^{-n}))$ such that $\{\dim_P  C: C\in \iC_n\}=A_n$ for all $n\geq 0$. Clearly, we may assume that $z_0\in K_0$. Define
\begin{equation*}
\iC=\iC_0 \cup \{K_0\cup C: C\in \iC_n,~n\geq 1\}.
\end{equation*}	
It is straightforward to check that $\iC\subset \iK(K)$ is compact and $\{ \dim_P  C: C\in \iC\}=A$. The proof of the theorem is complete.	
\end{proof}

\section{Open problems} \label{s:open}

Our first problem is the most ambitious one. For the sake of simplicity we only formalize it in case of the Hausdorff dimension.

\begin{problem} \label{p:K} 
Let $K\subset \R^d$ be compact. Characterize the sets $A\subset [0,d]$ for which there exists a compact set $C\subset K$ such that $\{\dim_H E: E\in \iM_C\}=A$.
\end{problem}

J\"arvenp\"a\"a, J\"arvenp\"a\"a, Koivusalo, Li, Suomala, and Xiao \cite[Lemma~2.3]{JJKLSX} proved an analogue of Hawkes' theorem in complete metric spaces satisfying a mild doubling condition. Thus the proof of Theorem~\ref{t:compact family} probably works in compact metric spaces with a suitable doubling condition as well. As the case of arbitrary compact metric spaces seems to be out of reach with this method, we ask the following.

\begin{problem}
Let $K$ be a non-empty compact metric space and let $A \subset [0, \dim_H K]$ be an analytic set. Is there a compact set $\mathcal{C} \subset \mathcal{K}(K)$ with $\{\dim_H C : C \in \mathcal{C}\} = A$?
\end{problem}

As Mattila and Mauldin \cite[Theorem~7.5]{MM} proved that the packing dimension $\dim_P \colon \iK(\R^d)\to [0,d]$ is not Borel measurable, we do not know whether analogue versions of Theorems~\ref{t:characterization} and~\ref{t:compact family} hold for the packing dimension. We state the measurability problems as follows. 

\begin{problem} \label{p:meas} If $K\subset \R^d$ is a compact set, is the set $\{\dim_P E: E\in \iM_K\}$ analytic? If $\iC \subset \iK(\R^d)$ is compact, is the set $\{\dim_P C: C\in \iC\}$ analytic?
\end{problem}

If the answer to the above problem is negative, we can ask for a characterization.  
\begin{problem} \label{p:1} Let $d\geq 1$. Characterize the sets $A \subset [0, d]$ for which there exists a compact set $K \subset \R^d$ with $\{\dim_P E : E \in \iM_K\} = A$.
\end{problem}

\begin{problem} \label{p:2} Let $K\subset \R^d$ be compact. Characterize the sets $A \subset [0, \dim_P K]$ for which there exists a compact set $\iC\subset \iK(K)$ with $\{\dim_P C: C\in \iC\}=A$.
\end{problem}

\subsection*{Acknowledgments}
We are indebted to Jonathan M.~Fraser for some illuminating conversations and for providing us with the reference \cite{FWW}. We thank Ville Suomala for pointing our attention to the paper \cite{JJKLSX}. We also thank Ignacio Garc\'ia for pointing out that a result stated in an earlier version of the manuscript had already been known.

\end{document}